\def\arXiv#1{arXiv:\href{http://arXiv.org/abs/#1}{#1}}
\def\MR#1{MR\href{http://www.ams.org/mathscinet-getitem?mr=#1}{#1}}
\def\spine{1.1in}
\def\N{\mathbb{N}}
\def\Z{\mathbb{Z}}
\def\R{\mathbb{R}}
\renewcommand{\v}[1]{\ensuremath{\mathbf{#1}}}
\numberwithin{equation}{section}
\newtheorem{theorem}{Theorem}[section]
\newtheorem{lemma}[theorem]{Lemma}
\newtheorem{remark}[theorem]{Remark}
\newtheorem{proposition}[theorem]{Proposition}
\theoremstyle{definition}
\newtheorem{claim}[theorem]{Claim}
\title{Balian-Low Theorems in Several Variables}
\date{\today}
\author{Michael Northington V}
\author{Josiah Park}
\address{School of Mathematics, Georgia Institute of Technology, Atlanta, GA 30332} 
\email{mcnv3@gatech.edu}
\address{School of Mathematics, Georgia Institute of Technology, Atlanta, GA 30332}
\email{j.park@gatech.edu}
\keywords{ Frames $\cdot$ Gabor systems $\cdot$ Riesz bases $\cdot$ Time-frequency analysis $\cdot$ Uncertainty principles $\cdot$ Balian-Low theorems}
\begin{document}

\begin{abstract}
Recently, Nitzan and Olsen showed that Balian-Low theorems (BLTs) hold for discrete Gabor systems defined on $\Z_d$.  Here we extend these results to a multivariable setting. Additionally, we show a variety of applications of the Quantitative BLT, proving in particular nonsymmetric BLTs in both the discrete and continuous setting for functions with more than one argument. Finally, in direct analogy of the continuous setting, we show the Quantitative Finite BLT implies the Finite BLT.

\end{abstract}

\maketitle

\section{Introduction}\label{sec:Intro}

Gabor systems are fundamental objects in time-frequency analysis. Given a set $\Lambda \subset \R^{2l}$ and a function $g \in L^2(\R^l)$, the Gabor system $G(g, \Lambda)$ is defined as 
\begin{eqnarray*} G(g,\Lambda) &=& \{ g(x-m) e^{2\pi i n\cdot x}\}_{(m,n) \in \Lambda}. \end{eqnarray*}
When $\Lambda$ is taken to be $\Z^{2l}$, $G(g)=G(g,\Z^{2l})$ is referred to as the \emph{integer lattice Gabor system} generated by $g$. The Balian-Low theorem (BLT) and its generalizations are uncertainty principles concerning the generator $g$ of such a system in the case that $G(g,\Lambda)$ forms a Riesz basis. 
\begin{theorem}[BLTs]\label{thm:BLT}
	Let $g\in L^2(\R)$ and suppose that the Gabor system $G(g)=G(g,\Z^2)$ is a Riesz basis for $L^2(\R)$.   
	\begin{enumerate}
		\item[(i)] If $1<p<\infty$ and $\frac{1}{p}+\frac{1}{q}=1$, then either,  
		\begin{eqnarray*} \int_\R |x|^p|g(x)|^2 dx\  =\ \infty\  \text{   or   }\   \int_\R |\xi|^q | \widehat{g}(\xi)|^2 d\xi\  =\  \infty. \end{eqnarray*}
		\item[(ii)] If $g$ is compactly supported, then 
		\begin{eqnarray*} \int_\R |\xi||\widehat{g}(\xi)|^2 d\xi &=&\infty.\end{eqnarray*}
		This part also holds with $g$ and $\widehat{g}$ interchanged.
	\end{enumerate}
\end{theorem}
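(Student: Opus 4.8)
The plan is to pass to the Zak transform and turn the desired conclusion into a topological statement about a nonvanishing, quasiperiodic function on the torus. Recall the Zak transform $Zg(x,\xi)=\sum_{k\in\Z}g(x+k)e^{-2\pi i k\xi}$, a unitary map $L^2(\R)\to L^2([0,1)^2)$ satisfying the quasiperiodicity relations $Zg(x+1,\xi)=e^{2\pi i\xi}Zg(x,\xi)$ and $Zg(x,\xi+1)=Zg(x,\xi)$, and intertwining the Fourier transform with a rotation of the fundamental domain. The first standard ingredient is that $G(g,\Z^2)$ is a Riesz basis for $L^2(\R)$ if and only if there are constants $0<A\le B<\infty$ with $A\le|Zg(x,\xi)|^2\le B$ for a.e.\ $(x,\xi)$; in particular $F:=Zg$ never vanishes. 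The second ingredient is the dictionary between moments of $g$ and smoothness of $F$: from $Z(tg)(x,\xi)=xZg(x,\xi)+c\,\partial_\xi Zg(x,\xi)$ (a constant $c$) and the analogous identity for $\widehat g$, one checks, via Plancherel in the $\xi$- and $x$-frequencies, that $\int_\R|x|^p|g|^2<\infty$ is equivalent to $F$ having $p/2$ fractional derivatives in $L^2$ in the $\xi$-variable, and $\int_\R|\xi|^q|\widehat g|^2<\infty$ is equivalent to $F$ having $q/2$ fractional derivatives in $L^2$ in the $x$-variable.

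For part (i), suppose toward a contradiction that both integrals are finite. Then $F$ lies in the anisotropic Sobolev space on $\mathbb{T}^2$ with $q/2$ derivatives in $x$ and $p/2$ derivatives in $\xi$. A Fourier-series computation shows that $\sum_{(m,n)\in\Z^2}(1+|m|^{q}+|n|^{p})^{-1}$ diverges exactly logarithmically when $\tfrac1p+\tfrac1q=1$; hence at the critical exponent this space just fails to embed in $C(\mathbb{T}^2)$ but does embed in $\mathrm{VMO}(\mathbb{T}^2)$, so $F\in\mathrm{VMO}$. Since $|F|$ is bounded above and below, $u:=F/|F|$ is a $\mathrm{VMO}$ map $\mathbb{T}^2\to S^1$; but $u$ also extends to a nonvanishing $\mathrm{VMO}$ map on the closed square $[0,1]^2$, so by the Brezis--Nirenberg degree theory for $\mathrm{VMO}$ maps the degree of $u$ restricted to $\partial([0,1]^2)$ must be $0$. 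On the other hand, the quasiperiodicity relations force that boundary degree to be $1$: the factor $e^{2\pi i\xi}$ on the right edge contributes one full turn while the remaining three edges cancel in pairs. This contradiction proves (i). (For the symmetric case $p=q=2$ one may instead use Battle's uncertainty-principle argument built on the canonical commutation relation and the biorthogonal dual window, which avoids the degree theory altogether.)

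For part (ii), assume $g$ is compactly supported and $\int_\R|\xi|\,|\widehat g(\xi)|^2\,d\xi<\infty$. Compact support of $g$ makes $F(x,\cdot)$ a trigonometric polynomial for each fixed $x$, so $\partial_\xi^s F\in L^2([0,1]^2)$ for every $s$ --- i.e.\ $F$ is ``infinitely smooth in $\xi$'' in the $L^2$ sense; meanwhile $\int|\xi|\,|\widehat g|^2<\infty$ gives $g\in H^{1/2}$ and hence a half derivative of $F$ in $L^2$ in the $x$-variable. This is once more precisely the borderline of the anisotropic embedding above (formally $q=1$, $p=\infty$), so $F\in\mathrm{VMO}(\mathbb{T}^2)$ and the same degree argument produces a contradiction. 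When the support of $g$ has length at most $1$ one can bypass the degree theory: then $|F(x,\xi)|$ is independent of $\xi$ and equals $|g|$ on a fundamental interval, so the lower Riesz bound forces $g$ to jump at the endpoints of its support, $\widehat g$ then decays no faster than $|\xi|^{-1}$, and $\int|\xi|\,|\widehat g|^2$ diverges. Interchanging the roles of $g$ and $\widehat g$ uses only the rotation invariance of $Z$ under the Fourier transform.

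The main obstacle is the regularity bookkeeping at the critical exponent: one must verify that finiteness of the weighted $L^2$ moments really does place $Zg$ in the anisotropic Sobolev space, and that this space sits inside $\mathrm{VMO}$ precisely when $\tfrac1p+\tfrac1q=1$ --- the equivalence between weighted $L^2$ control of $g$ and fractional smoothness of the Zak transform, together with the sharp success/failure of the embedding at the borderline, is where essentially all the work lies. Once $F\in\mathrm{VMO}$ is established, the degree-theoretic contradiction is comparatively soft.
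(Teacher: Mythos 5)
The paper treats Theorem~\ref{thm:BLT} as a cited background result and does not reprove it directly; nevertheless, the machinery of the paper does supply a proof as the $l=1$ case of Theorem~\ref{thm:BLTHD}, obtained in Section~\ref{FQBLTHDapps} as a corollary of the Quantitative BLT. That route is: apply $\int_{|x|\ge R}|g|^2+\int_{|\xi|\ge L}|\widehat g|^2\ge C/(RL)$ with $R=S^{1/p}$, $L=S^{1/q}$, integrate over $S\in[1,T]$, and interchange the order of integration by Tonelli; at the critical line $\tfrac1p+\tfrac1q=1$ the left side grows like $\log T$, which forces one of the weighted moments to be infinite as $T\to\infty$ (part (ii) follows the same way with the support condition killing one term). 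Your argument is a genuinely different route: it is the Zak-transform/anisotropic-Sobolev/VMO degree-theory argument of Coifman--Daubechies--Semmes (for $p=q=2$) and Gautam~\cite{G} (for the full critical range), which the paper cites but does not use. What each buys: the paper's route is quantitative (it yields explicit lower bounds on cut-off weighted moments, Theorem~\ref{thm:QuantBLTCorollaryNonsymmetric}, and extends immediately to $\tfrac1p+\tfrac1q\ne1$ and to $\R^l$), and it is ``soft'' given the Quantitative BLT as a black box; your route is topologically transparent and directly explains \emph{why} the critical line is critical, but it relies on the Brezis--Nirenberg degree theory for VMO maps and on the sharp failure of the anisotropic embedding into $C(\mathbb T^2)$ (and its success into $\mathrm{VMO}$), which is where you correctly identify all the work lies and which you do not carry out.

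Two points of caution in the write-up itself. First, $Zg$ is quasiperiodic, not periodic, so it is not a function on $\mathbb T^2$; your phrase ``$F$ lies in the anisotropic Sobolev space on $\mathbb T^2$'' should instead be phrased in terms of the local smoothness of $F$ on $[0,1]^2$ (or, as you implicitly do, in terms of the weighted $\ell^2$ control of the sequences $\{g(x+k)\}_k$ and $\{\partial_x^s g(x+k)\}_k$ that play the role of quasiperiodic Fourier coefficients). The degree argument should then be run on the closed square $[0,1]^2$, with the quasiperiodicity relations forcing boundary winding number $1$, which you do state correctly. Second, as you note, the anisotropic-Sobolev-into-$\mathrm{VMO}$ embedding at the borderline $\tfrac1p+\tfrac1q=1$ and its companion moment-to-fractional-smoothness dictionary are substantive; they are precisely the content of Gautam's paper, so as written this is a sound proof plan that reproduces a known argument rather than a self-contained proof.
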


The first theorem, stated independently by Balian \cite{Bal81} and Low \cite{Low}, was the \emph{symmetric} (i.e., $p=q=2$) case of the theorem above and originally was stated only for orthonormal bases.  The first proofs contained a common error, and a new, correct proof came later from Battle \cite{Bat88}. Soon afterwards, Coifman, Daubechies, and Semmes \cite{Dau} completed the argument in the original proofs and extended the result to all Riesz bases. The second part of Theorem \ref{thm:BLT} was originally given by Benedetto, Czaja, Powell, and Sterbenz \cite{BCPS}, while Gautam \cite{G} extended the BLT to the full range of \emph{nonsymmetric} (i.e., $p \neq q$) cases above.

The Balian-Low Theorem has been generalized in many ways. For example, Gr\"ochenig, Han, Heil, and Kutyniok \cite{GHHK} extended the symmetric Balian-Low theorem to multiple variables.
\begin{theorem}[Theorem 9, \cite{GHHK}]\label{thm:SymmetricBLTHD}
	Let $g \in L^2(\R^l)$ and consider the Gabor system $G(g, \Z^{2l})= \{ g(x-m) e^{2\pi i n\cdot x} \}_{(m,n)\in \Z^{2l}}$.  If $G(g,\Z^{2l})$ is a Riesz basis for $L^2(\R^l)$, then  for any $1 \le k \le l$, either 
	\begin{eqnarray*} \int_{\R^l} |x_k|^2|g(x)|^2 dx\ =\ \infty\  \text{ or }\   \int_{\R^l} |\xi_k|^2 | \widehat{g}(\xi)|^2 d\xi\  =\  \infty. \end{eqnarray*}
\end{theorem}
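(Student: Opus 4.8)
The plan is to reduce the $l$-dimensional statement to the classical one-variable Balian--Low theorem---the symmetric case $p=q=2$ of Theorem~\ref{thm:BLT}(i)---by slicing the Zak transform. First I would recall the standard facts about the $l$-dimensional Zak transform
\[
Zg(x,\omega)=\sum_{n\in\Z^l}g(x+n)\,e^{-2\pi i\,n\cdot\omega},
\]
namely that it is unitary from $L^2(\R^l)$ onto $L^2([0,1)^{2l})$, that it satisfies the quasi-periodicity relations $Zg(x+e_j,\omega)=e^{2\pi i\omega_j}Zg(x,\omega)$ and $Zg(x,\omega+e_j)=Zg(x,\omega)$, and that $G(g,\Z^{2l})$ is a Riesz basis for $L^2(\R^l)$ if and only if there are constants $0<A\le B<\infty$ with $A\le|Zg|\le B$ a.e. I would also use that $Z$ tensorizes in the pairs of variables: after freezing the $2(l-1)$ coordinates $(\hat x,\hat\omega)\in[0,1)^{2(l-1)}$ other than the $k$-th pair $(x_k,\omega_k)$, the resulting slice is the one-variable Zak transform, in $(x_k,\omega_k)$, of a function in $L^2(\R)$.

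Fixing $k$ and arguing by contradiction, I would suppose that both $\int_{\R^l}|x_k|^2|g(x)|^2\,dx<\infty$ and $\int_{\R^l}|\xi_k|^2|\widehat{g}(\xi)|^2\,d\xi<\infty$, and translate these into differentiability of $Zg$. From the identity $Z(x_k g)=x_k Zg+\tfrac{i}{2\pi}\partial_{\omega_k}Zg$ together with the boundedness $Zg\in L^\infty$ (so that $x_k Zg\in L^2([0,1)^{2l})$), the first hypothesis forces the distributional derivative $\partial_{\omega_k}Zg$ into $L^2([0,1)^{2l})$; applying the same computation to $\widehat{g}$ and using the intertwining of $Z$ with the Fourier transform, the second hypothesis forces $\partial_{x_k}Zg\in L^2([0,1)^{2l})$.

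Next I would slice in the remaining $2(l-1)$ coordinates. By Fubini, for a.e.\ $(\hat x,\hat\omega)$ the slice $F(x_k,\omega_k):=Zg(x_k,\omega_k,\hat x,\hat\omega)$ simultaneously (a)~satisfies $A\le|F|\le B$ a.e.\ on $[0,1)^2$ and (b)~has $\partial_{x_k}F$ and $\partial_{\omega_k}F$ in $L^2([0,1)^2)$. By the tensorization above, each such slice is the one-variable Zak transform $F=Zh$ of some $h=h_{\hat x,\hat\omega}\in L^2(\R)$; property (a) says $G(h,\Z^2)$ is a Riesz basis for $L^2(\R)$, while, reversing the identities of the previous paragraph, property (b) says $\int_\R|x|^2|h(x)|^2\,dx<\infty$ and $\int_\R|\xi|^2|\widehat{h}(\xi)|^2\,d\xi<\infty$. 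This contradicts the symmetric one-variable Balian--Low theorem, i.e.\ Theorem~\ref{thm:BLT}(i) with $p=q=2$, and completes the argument.

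I expect the main obstacle to be the bookkeeping in the middle two steps: verifying precisely that the weighted-$L^2$ decay of $g$ and $\widehat{g}$ in the $x_k$- and $\xi_k$-directions is equivalent to membership of $\partial_{\omega_k}Zg$ and $\partial_{x_k}Zg$ in $L^2$ of the torus, and that such distributional derivatives restrict to the corresponding derivatives of almost every slice (a Fubini-type statement for Sobolev functions on a torus). The genuinely hard analysis---in effect a topological obstruction, showing that a Sobolev function on the quasi-periodic torus cannot be bounded away from zero---is already contained in the one-variable theorem, which we are free to invoke.
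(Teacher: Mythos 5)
Your proposal is correct, but it takes a genuinely different route from the one this paper takes. The paper does not re-prove Theorem~\ref{thm:SymmetricBLTHD}; it cites it from \cite{GHHK} and later (Theorems~\ref{thm:BLTHD} and~\ref{thm:QuantBLTCorollaryNonsymmetric}) derives a nonsymmetric generalization --- which contains this statement as the $p=q=2$ case --- as a consequence of Temur's higher-dimensional Quantitative BLT (Theorem~\ref{thm:ContQuantBLTHD}): one applies the quantitative inequality with $R=S^{1/p}$, $Q=S^{1/q}$, integrates over $S\in[1,T]$, interchanges integrals via Tonelli, and then lets $T\to\infty$. Your approach is instead a dimension reduction through the Zak transform: slice $Zg$ along the $k$-th conjugate pair, transport the pointwise bounds $A\le|Zg|\le B$ and the membership $\partial_{x_k}Zg,\partial_{\omega_k}Zg\in L^2([0,1)^{2l})$ to almost every two-dimensional slice via Sobolev--Fubini, and feed each slice into the classical one-variable BLT. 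This is essentially the strategy of the original \cite{GHHK} argument rather than this paper's. The trade-off is clear: your route is more elementary, needing only the 1D BLT together with standard Zak and Sobolev machinery, but it is tied to the symmetric $p=q=2$ case, since it hinges on the equivalence $\int|x_k|^2|g|^2<\infty \iff \partial_{\omega_k}Zg\in L^2$; the paper's route takes the considerably harder Quantitative BLT as a black box but delivers nonsymmetric and quantitative conclusions in the same stroke. One small caution in your write-up: at the inversion step you cannot literally apply $Z$ to $xh$ before knowing $xh\in L^2(\R)$ --- the standard fix is to expand $\partial_\omega Zh$ as a Fourier series in $\omega$ and use Parseval to obtain $\sum_{n}n^2\|h\|_{L^2[n,n+1)}^2<\infty$, which is comparable to $\int_\R|x|^2|h(x)|^2\,dx$; the analogous point arises when passing from $\partial_{x_k}Zg$ to $\xi_k\widehat{h}$.
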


Another important generalization is the following \emph{Quantitative BLT} of Nitzan and Olsen, which quantitatively bounds the time-frequency localization of a square-integrable function. 
\begin{theorem}[Theorem 1, \cite{NOQuant}] \label{thm:QuantBLT}
	Let $g \in L^2(\R)$ be such that $G(g)$ is a Riesz basis for $L^2(\R)$.  Then, for any $R,L\ge 1$, we have 
	\begin{eqnarray}\label{eqn:quantBLT}
	\int_{|x|\ge R} |g(x)|^2 dx + \int_{|\xi|\ge L} |\widehat{g}(\xi)|^2 d\xi &\ge& \frac{C}{RL},
	\end{eqnarray}
	where the constant $C$ only depends on the Riesz basis bounds for $G(g)$.  
\end{theorem}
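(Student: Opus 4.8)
The plan is to pass to the Zak transform, where the Riesz basis hypothesis becomes a pointwise two‑sided bound and the Balian--Low phenomenon becomes a robust topological fact. Recall that $Zg(x,\xi)=\sum_{k\in\Z}g(x+k)e^{-2\pi i k\xi}$ is a unitary map of $L^2(\R)$ onto $L^2([0,1)^2)$, that $Zg$ is $1$‑periodic in $\xi$ and quasi‑periodic in $x$ with $Zg(x+1,\xi)=e^{2\pi i\xi}Zg(x,\xi)$, that $Z$ intertwines the Fourier transform with a rotation of the square (up to a unimodular chirp), and --- the key input --- that $G(g)$ is a Riesz basis with bounds $A,B$ if and only if $A\le|Zg(x,\xi)|^2\le B$ for a.e.\ $(x,\xi)$. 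A first bookkeeping step records that the $\xi$‑Fourier coefficients of $Zg(x,\cdot)$ are the translates $g(x-k)$, so truncating this series to $|k|\le R$ perturbs $Zg$ in $L^2([0,1)^2)$ by at most $\bigl(\int_{|x|\ge R-1}|g|^2\bigr)^{1/2}$; by the rotation identity the analogous statement holds in the $x$‑variable with $\widehat g$ and $L$. Hence it suffices to produce a function $\Phi$ on $[0,1)^2$ of ``bandwidth'' $\lesssim L$ in $x$ and $\lesssim R$ in $\xi$ with $\|Zg-\Phi\|_{L^2([0,1)^2)}^2$ bounded by a constant times the left side of \eqref{eqn:quantBLT}, and then to show that every such $\Phi$ is automatically far from $Zg$.

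Such a $\Phi$ should be built by mollification, but with care, since ordinary Fourier truncation destroys the quasi‑periodicity of $Zg$. The trick is that convolution in $x$ against $\rho_{1/L}$ manifestly commutes with the relation $Zg(x+1,\xi)=e^{2\pi i\xi}Zg(x,\xi)$, while convolution in $\xi$ against the \emph{chirped} kernel $t\mapsto\rho_{1/R}(t)e^{2\pi i t x}$ is checked, in one line, to preserve both periodicities. Taking $\Phi$ to be $Zg$ smoothed this way in both variables, with $\rho$ a fixed Schwartz function whose transform equals $1$ near the origin, one gets: (a) $\Phi\in C^\infty(\R^2)$ and $\Phi$ is again quasi‑periodic with the degree‑one multiplier; (b) $\|\Phi\|_\infty\le\|Zg\|_\infty\le\sqrt B$, and by Bernstein‑type bounds $\|\partial_x\Phi\|_\infty\lesssim L\sqrt B$ and $\|\partial_\xi\Phi\|_\infty\lesssim R\sqrt B$; (c) $\|Zg-\Phi\|_{L^2([0,1)^2)}^2\lesssim\int_{|x|\gtrsim R}|g|^2+\int_{|\xi|\gtrsim L}|\widehat g|^2$, which by the bookkeeping step is $\lesssim$ the left side of \eqref{eqn:quantBLT}. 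The scales $1/L$ and $1/R$ are essentially the only ones that work: large enough to keep the error at the level of the genuine tails, small enough to make the area bound below optimal.

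Now I would invoke the quantitative topological obstruction. Since $\Phi$ is continuous on $[0,1]^2$, quasi‑periodic with $\Phi(x+1,\xi)=e^{2\pi i\xi}\Phi(x,\xi)$ and $\Phi(x,\xi+1)=\Phi(x,\xi)$, the winding number of $\Phi$ along the boundary of the square is forced to be $1$ (the multiplier $e^{2\pi i\xi}$ contributes $+1$), whereas a nonvanishing continuous function on the contractible square would have winding number $0$; hence $\Phi(p_0)=0$ for some $p_0$. The gradient bounds in (b) then give $|\Phi|\le\tfrac12\sqrt A$ on a rectangle about $p_0$ of dimensions $\sim \sqrt A/(L\sqrt B)$ by $\sim\sqrt A/(R\sqrt B)$, hence of area $\gtrsim A/(BLR)$. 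On that rectangle $|Zg-\Phi|\ge|Zg|-|\Phi|\ge\tfrac12\sqrt A$, and since $|Zg-\Phi|^2$ is $[0,1)^2$‑periodic we may integrate there to obtain $\|Zg-\Phi\|_{L^2([0,1)^2)}^2\gtrsim A^2/(BLR)$. Combined with (c), the left side of \eqref{eqn:quantBLT} is $\gtrsim A^2/(BRL)$, with the implied constant depending only on $A$ and $B$.

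The step I expect to be the real work is (b)--(c): arranging the mollifiers --- especially the chirped one in $\xi$ --- so that quasi‑periodicity is preserved \emph{exactly} while the $L^2$ error is still bounded by the honest tails $\int_{|x|\ge R}|g|^2$ and $\int_{|\xi|\ge L}|\widehat g|^2$ (not by a larger tail, which matters when $R$ or $L$ is close to $1$) and the Bernstein constants come out with precisely the powers $L$ and $R$; tracking the rotation identity through the chirp to see that the $x$‑smoothing error is a \emph{frequency} tail of $\widehat g$ also requires attention. By contrast, the topological vanishing and the passage from a single zero to a region of area $\gtrsim 1/(RL)$ are routine once the approximant is in hand.
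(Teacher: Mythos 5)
Your proposal is correct and follows essentially the same route as the proof in the cited reference \cite{NOQuant} (the paper quotes this theorem without reproducing the argument): pass to the Zak transform so that the Riesz-basis hypothesis becomes the two-sided pointwise bound $A\le|Zg|^2\le B$; build a quasi-periodic smooth approximant to $Zg$ by convolving with a plain mollifier in $x$ at scale $1/L$ and a chirped mollifier in $\xi$ at scale $1/R$, so that the $L^2$ error is governed by the tails of $g$ and $\widehat g$; invoke the winding-number obstruction to force a zero of the approximant; and use the Bernstein-type gradient bounds $\lesssim L\sqrt B$ and $\lesssim R\sqrt B$ to inflate that zero into a rectangle of area $\gtrsim A/(BRL)$ on which $|Zg-\Phi|\ge\tfrac12\sqrt A$, yielding the $\gtrsim A^2/(BRL)$ lower bound on the error. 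Two small points that you correctly flag as requiring care, but which do not affect correctness: (1) the chirp $t\mapsto\rho_{1/R}(t)e^{2\pi itx}$ is exactly what makes the $\xi$-smoothing commute with the quasi-periodicity multiplier, and its $x$-dependence contributes an extra term to $\partial_x\Phi$ of size $O(\sqrt B/R)\le O(L\sqrt B)$, so the gradient bounds survive; (2) the mollification error is naturally bounded by tails at radii $cR$ and $cL$ for a fixed $c<1$ rather than exactly $R$ and $L$, but since the target estimate is scale-invariant up to the constant $C$, rescaling $R\mapsto R/c$, $L\mapsto L/c$ recovers the stated inequality with a slightly worse (still $A,B$-dependent) constant. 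So the one place you identify as ``the real work'' is indeed where the technicalities live, and they all close.
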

This result has also been extended to Gabor systems in $L^2(\R^l)$ in \cite{Temur}. (See Theorem \ref{thm:ContQuantBLTHD} below.)
The Quantitative BLT is a strong result. In particular, a function satisfying \eqref{eqn:quantBLT} automatically satisfies the conclusions of both parts of Theorem \ref{thm:BLT}.  Later, we will use the Quantitative BLT and its higher dimensional analog to show that nonsymmetric versions of Theorem \ref{thm:SymmetricBLTHD} hold for $\R^l$, $l\geq 2$.

In applications Gabor systems are used in signal analysis to give alternate representations of data with desirable properties. Often it is useful to have window functions which measure locally in time for efficiency while capturing local frequency information simultaneously. Uncertainty theorems like the BLT limit how well localized a window can be in the time and frequency domains. This led Lammers and Stampe \cite{Lammers} to conjecture that finite versions of the BLT should hold for discrete Gabor systems.  The essence of this question was answered in one dimension by Nitzan and Olsen \cite{Nitzan} who showed that versions of both the BLT and the quantitative BLT exist for discrete Gabor systems.

In the finite setting, instead of functions in $L^2(\R)$, complex--valued sequences defined on $\Z_d=\Z/d\Z$ act as the object of study, where $d=N^2$ for some $N\in \N$.  It is sometimes useful to fix representatives of $\Z_d$ in connection with the view of $\Z_{d}$ as a discretization of $\R$, and a convenient choice in what follows is $I_d=[-d/2,d/2)\bigcap \Z=\{ -\lfloor\frac{d}{2}\rfloor, ...,d- \lfloor\frac{d}{2}\rfloor-1\}$. Such sequences $b$ may be thought of as samples of a continuous function $g$ defined on $[-\frac{N}{2}, \frac{N}{2}]$ at integer multiples of $1/N$ so that $b(j)=g(j/N)$ for $j \in I_d$.  

Let $\ell_2^d$ denote the set of complex-valued sequences on $\Z_d= \Z/d\Z$ with the norm
\begin{eqnarray} \|b\|_{\ell_2^d}^2 &=& \frac{1}{N} \sum_{j\in \Z_d} |b(j)|^2\label{l2dnorm}.\end{eqnarray}
With this normalization and the sampling view noted above, $\|b\|_{\ell_2^d}^2$ approximates $\|g\|_{L^2[-\frac{N}{2},\frac{N}{2}]}^2$. We define the \emph{discrete Gabor system generated by }$b$, denoted $G_d(b)$, to be,
\begin{eqnarray*} G_d(b) &=& \{ b(j-nN) e^{2\pi i \frac{mj}{N}}\}_{(n,m)\in \{0,...,N-1\}^2}= \{ b(j-n)e^{2\pi i \frac{mj}{d}}\}_{(n,m)\in (N\Z_d)^2}. \end{eqnarray*}
Here $N\Z_d= \{Nj: j\in\Z\}\mod d$ so that $\#(N\Z_d)=N$. This definition lines up with the definition of $G(g)$ above, as shifting $g$ by $n$ corresponds to shifting $b$ by $nN$, and modulation of $g$, $g(x)e^{2\pi i m x}$, corresponds to a new sequence $b(j)e^{2\pi i m j/N}$.

To formulate the (symmetric) BLT in a finite setting, it is useful to consider an equivalent condition to the conclusion of the BLT which is in terms of the distributional derivatives of $g$ and $\widehat{g}$, $Dg$ and $D\widehat{g}$.  In particular, the condition 
\begin{eqnarray} \int_\R |x|^2|g(x)|^2 dx\ =\ \infty\  \text{ or } \ \int_\R |\xi|^2 |\widehat{g}(\xi)|^2 d\xi\ =\ \infty\label{eqn:BLTConclusion}\end{eqnarray}
is equivalent to
\begin{eqnarray}\label{eqn:derivatives}
Dg \notin L^2(\R) \text{ or } D\widehat{g} \notin L^2(\R). 
\end{eqnarray}
For finite generators, $b\in \ell_2^d$, we instead work with differences, 
\begin{eqnarray*} \Delta b &=&\{ b(j+1)-b(j)\}_{j \in \Z_d}, \end{eqnarray*}
and note that $N \Delta b$ approximates the derivative of $g$. We normalize the discrete Fourier transform of $b$ by
\begin{eqnarray*} \mathcal{F}_d(b)(k) &=& \frac{1}{N} \sum_{j \in \Z_d} b(j) e^{-2\pi i \frac{jk}{d}}, \end{eqnarray*}
so that $\mathcal{F}_d$ is an isometry on $\ell_2^d$.  Then the quantity 
\[ \|N\Delta b\|_{\ell_2^d}^2 + \|N \Delta \mathcal{F}_d(b)\|_{\ell_2^d}^2\]
acts as a discrete counterpart to the expressions in equation \eqref{eqn:derivatives}. Recall that a sequence $\{h_n\}$ is a Riesz basis for a separable Hilbert space, $\mathcal{H}$, if and only if it is complete in $\mathcal{H}$ and there exists constants $0<A\le B<\infty$ such that 
\begin{equation}\label{eqn:RieszBasisDef}
A\left( \sum_{n } |c_n|^2\right) \le \left\| \sum_{n} c_n h_n \right\|_{\mathcal{H}} \le B\left( \sum_{n} |c_n|^2 \right),
\end{equation}
for any sequence (equivalently, a Riesz basis is the image of an orthonormal basis under a bounded invertible operator on $\mathcal{H}$).  Here $A$ and $B$ are referred to as the lower and upper Riesz basis bounds, respectively. We say that $b$ \emph{generates an $A,B$-Gabor Riesz basis} if $G_d(b)$ is a basis for $\ell_2^d$ with Riesz basis bounds $A$ and $B$.

The following \emph{Finite BLT} of Nitzan and Olsen shows optimal bounds on the growth of this quantity for the class of sequences which generate Gabor Riesz bases with fixed Riesz basis bounds.  

\begin{theorem}[Theorem 4.2, \cite{Nitzan}] \label{thm:FiniteBLT}
	For $0<A\le B<\infty$, there exists a constant $c_{AB}>0$, depending only on $A$ and $B$, such that for any $N\ge 2$ and for any $b\in \ell_2^d$ which generates an $A,B$-Gabor Riesz basis for $\ell_2^d$, 
	\begin{eqnarray*} c_{AB}\log(N) &\le& \|N\Delta b\|_{\ell_2^d}^2 + \|N \Delta \mathcal{F}_d(b)\|_{\ell_2^d}^2. \end{eqnarray*}
	Conversely, there exists a constant $C_{AB}$ such that for any $N\ge2$, there exists $b\in \ell_2^d$ which generates and $A,B$-Gabor Riesz basis for $\ell_2^d$ such that 
	\begin{eqnarray*} \|N\Delta b\|_{\ell_2^d}^2 + \|N \Delta \mathcal{F}_d(b)\|_{\ell_2^d}^2 &\le& C_{AB}\log(N). \end{eqnarray*}
\end{theorem}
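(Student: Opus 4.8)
The plan is to route the statement through the finite Zak transform, which turns both inequalities into statements about the discrete Dirichlet energy of a quasi-periodic function on the torus $\Z_N\times\Z_N$. Let $Z_N\colon\ell_2^d\to\ell_2(\Z_N\times\Z_N)$ be the finite Zak transform, $Z_Nb(j,k)=c_N\sum_{l}b(j+lN)e^{-2\pi i lk/N}$ with $c_N$ chosen so that $Z_N$ is unitary, extended to $\Z^2$ by the quasi-periodicity $Z_Nb(j+N,k)=e^{2\pi i k/N}Z_Nb(j,k)$, $Z_Nb(j,k+N)=Z_Nb(j,k)$. Three facts drive the argument. (a) $Z_N$ is unitary. (b) $G_d(b)$ is a Riesz basis precisely when $Z_Nb$ never vanishes, and then $G_d(b)$ is an $A,B$-Gabor Riesz basis exactly when $\alpha\le|Z_Nb|^2\le\beta$ on $\Z_N\times\Z_N$, where $\alpha,\beta$ are the normalization-adjusted images of $A,B$ --- the finite shadow of the classical criterion $0<A\le|Zg|^2\le B$. (c) The difference operators intertwine with $Z_N$: $Z_N(\Delta b)=\delta_1 Z_Nb$, while $Z_N(\Delta\mathcal F_d b)$ equals $\delta_2 Z_Nb$ up to a unimodular factor and a ninety-degree rotation of the base, where $\delta_1,\delta_2$ are the forward differences in the two variables (read off via the quasi-periodic extension across the seam). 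Combining (a) and (c),
\[
\|N\Delta b\|_{\ell_2^d}^2+\|N\Delta\mathcal F_d b\|_{\ell_2^d}^2\ \asymp\ \mathcal E(Z_Nb):=\sum_{(j,k)\in\Z_N\times\Z_N}\left(|\delta_1 Z_Nb(j,k)|^2+|\delta_2 Z_Nb(j,k)|^2\right),
\]
up to an absolute factor and an error $O(N^{-2}\|b\|_{\ell_2^d}^2)$ coming from the rotation phase, negligible against $\log N$. It therefore suffices to prove: (lower bound) $\mathcal E(F)\gtrsim c_{AB}\log N$ for every $F\colon\Z_N\times\Z_N\to\C$ obeying the quasi-periodicity above with $\alpha\le|F|^2\le\beta$; and (converse) there is such an $F$ with $\mathcal E(F)\le C_{AB}\log N$.

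The lower bound is the finite, quantitative incarnation of the Coifman--Daubechies--Semmes obstruction. The quasi-periodicity makes any continuous interpolation of $F$ a nonvanishing section of a degree-one line bundle over the torus; such a section must carry a vortex, and a degree-one vortex forces logarithmic Dirichlet energy. Concretely, I would pass to the phase $F/|F|$ (legitimate since $|F|\ge\sqrt\alpha>0$), observe that the twist forces its circulation around the outer square loop to equal $1$, that circulation is additive over the unit plaquettes, and hence is carried across a nested family of disjoint dyadic square annuli of radii $2^m$ for $0\le m\lesssim\log N$; on each such annulus the energy is bounded below by a constant multiple of the squared circulation it carries, and summing over the $\asymp\log N$ annuli yields $\gtrsim c_{AB}\log N$. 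An alternative, closer to the theme ``Quantitative Finite BLT $\Rightarrow$ Finite BLT'': the elementary inequality $|e^{2\pi i k/d}-1|\ge 4|k|/d$ on $I_d$ and Parseval give $\|N\Delta b\|^2\gtrsim\tfrac1N\sum_{k\in I_d}(k/N)^2|\mathcal F_d b(k)|^2$, and symmetrically for $b$; then a dyadic splitting of these two discrete second moments, followed by Abel summation against the tail bound $\gtrsim 2^{-2m}$ at scales $R=L=2^m$ supplied by the finite analogue of Theorem~\ref{thm:QuantBLT}, collapses to $\gtrsim c_{AB}\log N$.

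For the converse I would build the near-extremizer on the Zak side and pull it back. Let $\phi$ be the standard Gaussian; its continuous Zak transform $Z\phi$ has exactly one simple zero $z_0$ per period cell (for the Gaussian, $Z\phi$ is, up to normalization, a Jacobi theta function) and obeys the same quasi-periodicity. Translating $\phi$ by a vector of size $O(1/N)$ we may arrange that $z_0$ lies at distance $\asymp1/N$ from every point of the grid $\tfrac1N\Z^2$. Put $F:=\gamma\,(Z\phi/|Z\phi|)$ on $\Z_N\times\Z_N$, with the constant $\gamma$ picked so that $\alpha\le|F|^2=\gamma^2\le\beta$; then $b:=Z_N^{-1}F$ generates an $A,B$-Gabor Riesz basis. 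Away from $z_0$ the map $Z\phi/|Z\phi|$ is smooth with the vortex bound $|\nabla(Z\phi/|Z\phi|)|\lesssim\mathrm{dist}(\cdot,z_0)^{-1}$, so $\mathcal E(F)\lesssim\gamma^2 N^{-2}\sum_{\text{grid}}\mathrm{dist}(\cdot,z_0)^{-2}\lesssim\gamma^2\int_{1/N}^{1}\frac{dr}{r}\lesssim C_{AB}\log N$; by the displayed equivalence, $\|N\Delta b\|^2+\|N\Delta\mathcal F_d b\|^2\le C_{AB}\log N$.

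The main obstacle is the middle layer: making the Zak-transform dictionary exact --- in particular identifying the unimodular/rotation factor in (c) and checking it cannot cancel the leading term --- together with the two-sided discrete vortex-energy estimate, especially the lower bound $\mathcal E(F)\gtrsim c_{AB}\log N$ uniformly over all admissible $F$ (the discrete counterpart of ``energy $=\infty$'' in the continuous BLT), whose rigorous version needs the discrete vortex-ball machinery; one also needs the quantitative control of the singularity of $Z\phi/|Z\phi|$ near its zero for the construction. The remaining ingredients --- the Parseval identities, the dyadic/Abel bookkeeping, and $|e^{2\pi i k/d}-1|\ge 4|k|/d$ --- are routine.
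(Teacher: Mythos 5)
The theorem you are proving is cited from \cite{Nitzan} (their Theorem 4.2); the present paper does not reprove it, but restates the Zak-side formulation as Theorem~\ref{thm:betabound1d} and sketches the strategy in the surrounding discussion. Your proposal matches that sketch. The dictionary (a)--(c) is Propositions~\ref{prop:ZakProperties} and~\ref{prop:alphabeta}. Your circulation/winding lower bound across $\asymp\log N$ scales is the same topological obstruction that \cite{Nitzan} implements with lattice-type structures (their Lemmas 3.1, 3.4 and Corollary 3.6); the dyadic-annulus packaging is a cosmetic variant. Your upper bound --- pulling a constant-modulus quasi-periodic function with a single well-separated phase vortex back through the (unitary) Zak transform --- is the BCGP-style construction that \cite{Nitzan} adapts to the finite setting, as the paper itself notes. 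Your alternative lower-bound route via the Quantitative Finite BLT is exactly what Section~\ref{FQBLTHDapps} of this paper carries out (the $p=q=2$ case of Theorem~\ref{thm:NonsymFiniteBLT}), but as the paper remarks that derivation only covers $N\ge 200\sqrt{B/A}$, so it does not by itself give the stated result for all $N\ge 2$.

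One concrete error: in step (c) you claim the additive error coming from the phase factor is $O(N^{-2}\|b\|_{\ell_2^d}^2)$. It is in fact $O(B)$, i.e.\ an absolute constant depending on the upper Riesz bound --- this is precisely the $8\pi^2 B$ in Proposition~\ref{prop:alphabeta}. The factor $e^{2\pi i mn/d}$ in $Z_d(\mathcal F_d b)(m,n)=e^{2\pi i mn/d}Z_d(b)(-n,m)$, once differenced over a single step, contributes a term of size $\asymp n/d=O(1/N)$; the scaling by $N$ removes that decay, and the $1/N^2$ normalization in $\|\cdot\|_{\ell_2(S_N^2)}^2$ cancels against the $\asymp N^2$ grid points, leaving a constant. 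This is still $o(\log N)$ as $N\to\infty$, but since the theorem asserts the bound for every $N\ge 2$ you cannot simply wave it away: the finitely many small $N$ must be treated separately (e.g.\ a compactness argument showing the energy has a strictly positive minimum over $A,B$-generators at each fixed $N$). Finally, you flag yourself that the discrete vortex-ball lower bound is left as a sketch; that is where the real content of \cite{Nitzan} lives, and a rigorous argument would need to reproduce something like their Lemmas~3.1 and~3.4 and Corollary~3.6, not just the heuristic.
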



Nitzan and Olsen also show that the continuous BLT, Theorem \ref{thm:BLT}, follows from this discrete version and that the following \emph{Finite Quantitative BLT} also holds.
\begin{theorem}[Theorem 5.3, \cite{Nitzan}]\label{thm:FiniteQuantBLT}
	Let $A,B>0$.  There exists a constant $C_{AB}>0$ such that the following holds.  Let $N \ge 200\sqrt{B/A}$ and let $b \in \ell_2^d$ generate an $A,B$-Gabor Riesz basis.  Then, for all positive integers $1\le Q, R\le (N/16)\sqrt{A/B}$, we have 
	\begin{eqnarray*} \frac{1}{N} \sum_{j=NQ}^{d-1} |b(j)|^2 + \frac{1}{N} \sum_{k=NR}^{d-1} |\mathcal{F}_d b(k)|^2 &\ge& \frac{C_{AB}}{QR}. \end{eqnarray*}
\end{theorem}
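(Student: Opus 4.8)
The plan is to adapt to $\Z_d$ the Zak-transform proof of the continuous Quantitative BLT (Theorem~\ref{thm:QuantBLT}). Write $d=N^{2}$ and use the finite Zak transform
\[
Z_{N} b(n,m)\;=\;\frac{1}{\sqrt N}\sum_{k=0}^{N-1} b(n+kN)\,e^{-2\pi i mk/N}.
\]
Up to a fixed normalization, $Z_N$ is an isometry of $\ell_2^d$ onto the space of functions on $\Z_N\times\Z_N$ obeying the Zak quasi-periodicity $Z_{N}b(n+N,m)=e^{2\pi i m/N}Z_{N}b(n,m)$; it intertwines $\mathcal F_d$ with a $90^{\circ}$ rotation of the index plane followed by multiplication by the quadratic phase $e^{2\pi i nm/d}$ (the finite counterpart of $Zg(x,\xi)=e^{2\pi i x\xi}Z\widehat g(\xi,-x)$); and ``$b$ generates an $A,B$-Gabor Riesz basis'' is equivalent to the pointwise bound $A\le|Z_{N}b(n,m)|^{2}\le B$ for all $(n,m)$, with implied constants depending only on $A,B$. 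One begins with the trivial reduction: since $QR\ge1$, if either of the two sums in the conclusion exceeds a fixed small constant the statement holds, so we may assume both are as small as we wish.

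Next I read off low-degree structure of $F:=Z_{N}b$ from the concentration hypotheses. Let $b_{0}$ be the truncation of $b$ to $\{0,\dots,NQ-1\}$ and $\widehat b_{0}$ the truncation of $\mathcal F_d b$ to $\{0,\dots,NR-1\}$, so that $\varepsilon_{1}:=\|b-b_{0}\|_{\ell_2^d}^{2}=\frac1N\sum_{j=NQ}^{d-1}|b(j)|^{2}$ and $\varepsilon_{2}:=\|\mathcal F_d b-\widehat b_{0}\|_{\ell_2^d}^{2}=\frac1N\sum_{k=NR}^{d-1}|\mathcal F_d b(k)|^{2}$. Since $b_{0}$ occupies only $NQ$ consecutive indices, $Z_{N}b_{0}(n,\cdot)$ is, for each $n$, an exponential polynomial of degree $<Q$ in $m$; because $Z_{N}$ is an isometry, $\|F-Z_{N}b_{0}\|^{2}=\varepsilon_{1}$, so the part of $F$ whose $m$-frequencies lie outside a fixed block of $Q$ consecutive frequencies has $\ell_2$-mass at most $\varepsilon_{1}$. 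Feeding the analogous splitting of $\mathcal F_d b$ through the Fourier--Zak intertwining shows in the same way that, after multiplication by $e^{-2\pi i nm/d}$, the part of $F$ with $n$-frequencies outside a block of $R$ consecutive frequencies has $\ell_2$-mass at most $\varepsilon_{2}$. In short, $|F|^{2}$ is pinned between $A$ and $B$ while, in two distinct senses, $F$ is within $\ell_2$-distance $\sqrt{\varepsilon_{1}}$, respectively $\sqrt{\varepsilon_{2}}$, of being ``$(Q,R)$-band-limited''.

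The heart of the matter, and the step where I expect the real work to lie, is the quantitative obstruction
\[
\varepsilon_{1}+\varepsilon_{2}\;\ge\;\frac{C_{AB}}{QR}.
\]
This is a discrete, quantitative form of the topological non-triviality that drives the classical BLT: the Zak quasi-periodicity forces a chirp-type factor $e^{2\pi i nm/d}$ genuinely into $F$, and such a factor cannot coexist---except up to small $\ell_2$-error---with $F$ being at once close to low $m$-degree and, after dechirping, close to low $n$-degree, while $|F|^{2}\ge A$; one expects, exactly as in the continuous case, that this incompatibility costs $\ell_2$-mass of order $1/(QR)$, the finite analogue of the fact that $e^{2\pi i x\xi}$ on $[0,1]^{2}$ lies at $L^{2}$-distance $\gtrsim 1/(QR)$ from every trigonometric polynomial of bidegree $(Q,R)$. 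Making this rigorous---tracking the phase and winding of $F$ along slices, quantifying how much of the twist can be absorbed by a block of $Q$ (respectively $R$) consecutive frequencies, assembling the slice estimates, and controlling the errors contributed by the two independent truncations---is what the bulk of the proof consists of. The hypotheses $N\ge 200\sqrt{B/A}$ and $1\le Q,R\le(N/16)\sqrt{A/B}$ are exactly the regime in which this analysis closes: they keep the relevant Fourier blocks small compared to $N$, so that the twist is not absorbed into the discretization, and they fix the constant $C_{AB}$.

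A logically equivalent route, perhaps cleaner to present now that Theorem~\ref{thm:QuantBLT} is available, is transference: interpolate the Zak datum $Z_{N}b$ to the Zak transform of a genuine $g\in L^{2}(\R)$, chosen so that $g$ generates a Gabor Riesz basis with bounds comparable to $A,B$, is essentially supported on an interval of length $\asymp Q$, and is essentially band-limited to a set of measure $\asymp R$, with time- and frequency-tails controlled by $\varepsilon_{1}$ and $\varepsilon_{2}$; applying Theorem~\ref{thm:QuantBLT} to $g$ then gives the claim. The delicate point here is again the same: controlling the interpolation so that the small, high-degree tails $b-b_{0}$ and $\mathcal F_d b-\widehat b_{0}$ neither destroy the lower Riesz bound nor the localization of $g$.
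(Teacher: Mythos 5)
Your outline correctly identifies the right tools (finite Zak transform, quasi-periodicity, the equivalence between the Riesz basis property and the two-sided bound $A\le|Z_N b|^2\le B$) and correctly locates the difficulty, but the proposal stops short of proving the step that actually carries the weight. What you call ``the heart of the matter'' --- that $\varepsilon_1+\varepsilon_2\gtrsim 1/(QR)$ --- is precisely the content of the theorem, and you offer only a heuristic (distance of the chirp $e^{2\pi i nm/d}$ from trigonometric polynomials of bidegree $(Q,R)$) rather than a proof. That heuristic in fact underestimates the technical content: the obstruction is not a clean chirp-approximation bound but a \emph{winding/jump} argument constrained by the pointwise lower bound $|Z_N b|^2\ge A$. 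In the paper (Section~\ref{sec:proofFiniteQuantBLTHD}, proving the generalization Theorem~\ref{thm:FiniteQuantBLTHD}), this obstruction is realized as Lemma~\ref{lem:generatorConvSmooth}: one convolves $b$ and $\mathcal F_d b$ with explicit kernels $\phi,\psi$ of bounded total variation (built from a periodized, sampled Fej\'er-type function $\rho$), uses Lemma~\ref{lem:convboundzd} to make $Z(b\ast\phi)$ slowly varying in $\v{m}$, and then applies the jump-counting machinery of \cite{Nitzan} (Corollary 3.6 there) across $\sim N^2/(QR)$ disjoint lattice-type subsets of $S_N^2$, each of which must contain at least one point where $Z(b)$ or $Z(\mathcal F_d b)$ differs from its smoothed version by a fixed $\delta=\delta(A)$. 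Summing these $\delta^2$-contributions gives the lower bound. You do not reproduce this counting step, nor an equivalent one, so the proposal has a genuine gap.

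Two further remarks. First, your band-limitation framing (truncate $b$ to $[0,NQ)$, observe $Z_N b_0(n,\cdot)$ has degree $<Q$ in $m$) is correct and is a reasonable dual to the paper's smoothing-by-convolution framing; either could in principle be used, but either way the quantitative lower bound must come from a jump-counting argument tied to the constraint $|Z_N b|\ge\sqrt A$, not from a pure approximation-theory estimate for the chirp. Second, the ``transference'' alternative you sketch (build a continuous $g$ from the finite data and apply Theorem~\ref{thm:QuantBLT}) runs in the opposite direction from what \cite{Nitzan} actually do --- they derive the continuous BLT \emph{from} the finite one --- and making it rigorous would require nontrivial control of the interpolation to preserve both the Riesz bounds and the tails, which is not addressed.
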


\subsection{Extension to Several Variables}
The first goal of this paper is to extend Theorem \ref{thm:FiniteBLT} and \ref{thm:FiniteQuantBLT} to several variables, which we state below in Theorems \ref{thm:FiniteBLTHD} and \ref{thm:FiniteQuantBLTHD}.  

We consider complex-valued sequences on $\Z_d^l=\Z_d \times \cdots \times \Z_d$ for $l\ge 1$, and we denote the set of all such sequences as $\ell_2^{d,l}$. The view of these sequences as samples of a continuous $g \in L^2([-\frac{N}{2},\frac{N}{2}]^l)$, where $b(\v{j})=g(\v{j}/N)$ for $\v{j}=(j_1,...,j_l)\in I_d^l$ leads to the normalization 
\begin{eqnarray*} \|b\|_{\ell_2^{d,l}}^2 \ =\  \frac{1}{N^l} \sum_{\v{j} \in \Z_d^l} |b(\v{j})|^2 \ =\ \frac{1}{N^l} \sum_{\v{j} \in I_d^l} |b(\v{j})|^2. \end{eqnarray*}
The discrete Fourier transform, $\mathcal{F}_{d,l}$, on $\ell_2^{d,l}$, is given by 
\begin{eqnarray*} \mathcal{F}_{d,l}(b)(\v{k}) &=& \frac{1}{N^l} \sum_{\v{j}\in \Z_d^l} b(\v{j}) e^{-2\pi i \frac{\v{j}\cdot \v{k}}{d}}. \end{eqnarray*}
Under this normalization, $\mathcal{F}_{d,l}$ is an isometry on $\ell_2^{d,l}$.  The Gabor system generated by $b$, $G_{d,l}(b)$ is given by 

\begin{eqnarray*} G_{d,l}(b) \ =\  \{ b(\v{j}-N\v{n}) e^{2\pi i \frac{\v{j}\cdot \v{m}}{N}}\}_{(\v{n},\v{m})\in \{0,...,N-1\}^{2l}}\ =\  \{ b(\v{j}-\v{n})e^{2\pi i \frac{\v{j}\cdot \v{m}}{d}}\}_{(\v{n},\v{m})\in (N\Z_d)^{2l}}. \end{eqnarray*} 

For any $k \in \{1,...,l\}$, let $\Delta_k: \ell_2^{d,l}\rightarrow \ell_2^{d,l}$ be defined by 
\begin{eqnarray*} \Delta_k b (\v{j}) &=& b(\v{j}+\v{e}_k)-b(\v{j}), \end{eqnarray*} 
where $\{\v{e}_k\}_{k \in \{1,...,l\}}$ is the standard orthonormal basis for $\R^l$.  Then $N \Delta_k b$ approximates the partial derivative $\frac{\partial g}{\partial x_k}$.

We have the following generalization of Theorem \ref{thm:FiniteBLT}. 
\begin{theorem} \label{thm:FiniteBLTHD}
	Fix constants $0<A\le B<\infty$.  With the same constants $c_{AB}$ and $C_{AB}$ from Theorem \ref{thm:FiniteBLT}, for $N\ge 2$, $1\le k \le l$, and for any $b \in \ell_2^{d,l}$ which generates an $A,B$-Gabor Riesz basis for $\ell_2^{d,l}$, we have
	\begin{eqnarray*} c_{AB} \log(N) &\le& \|N\Delta_k b\|_{\ell_2^{d,l}}^2 + \|N \Delta_k \mathcal{F}_{d,l}(b)\|_{\ell_2^{d,l}}^2. \end{eqnarray*}
	Conversely, for $N\ge 2$ and $1 \le k \le l$, there exists $b\in \ell_2^{d,l}$ which generates an $A,B$-Gabor Riesz basis such that
	\begin{eqnarray*} \|N\Delta_k b\|_{\ell_2^{d,l}}^2 + \|N \Delta_k \mathcal{F}_{d,l}(b)\|_{\ell_2^{d,l}}^2 &\le& C_{AB} \log(N). \end{eqnarray*}
\end{theorem}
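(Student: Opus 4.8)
The plan is to reduce the $l$-variable statement to the one-variable Theorem \ref{thm:FiniteBLT} by a tensor-product argument, exploiting the product structure of $\Z_d^l$, the discrete Fourier transform $\mathcal{F}_{d,l}$, and the Gabor system $G_{d,l}(b)$. For the lower bound, fix $k$; after permuting coordinates we may assume $k=1$. The key observation is that if $b\in\ell_2^{d,l}$ generates an $A,B$-Gabor Riesz basis, then for ``most'' fixed values of the remaining coordinates $\v{j}'=(j_2,\dots,j_l)$, the slice $b_{\v{j}'}(j_1):=b(j_1,\v{j}')$ generates an $A',B'$-Gabor Riesz basis on $\ell_2^d$ with bounds depending only on $A,B$ — or, more robustly, that an averaged version of the one-dimensional Riesz-basis inequality \eqref{eqn:RieszBasisDef} holds after integrating out $\v{j}'$. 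I would make this precise by writing the $2l$-dimensional Gabor coefficient map as a composition/tensor of the $2$-dimensional one in the first pair of variables with the identity-type structure in the others, so that the $A,B$ bounds for $G_{d,l}(b)$ force a uniform lower frame-type bound for the partial Gabor system in $(n_1,m_1)$ acting on $b$ viewed as a vector-valued (in $\v{j}'$) sequence.

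The second step is to run the one-dimensional Finite BLT machinery ``with parameters.'' Concretely, I expect Nitzan–Olsen's proof of Theorem \ref{thm:FiniteBLT} to be robust enough that it applies verbatim to a family of one-dimensional generators with uniform bounds, or equivalently to an $\ell_2$-valued generator; this yields
\[
c_{AB}\log(N)\ \le\ \|N\Delta_1 b\|_{\ell_2^{d,l}}^2 + \|N\Delta_1 \mathcal{F}_{d,l}(b)\|_{\ell_2^{d,l}}^2,
\]
after summing the one-dimensional estimates over $\v{j}'$ (resp. $\v{k}'$) and using that $\Delta_1$ acts only on the first coordinate while $\mathcal{F}_{d,l}$ factors as $\mathcal{F}_d$ in the first variable tensored with $\mathcal{F}_{d,l-1}$ in the rest, and that $\mathcal{F}_{d,l-1}$ is an isometry so it does not disturb the $\ell_2$ norm in the remaining variables. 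The normalizations of $\|\cdot\|_{\ell_2^{d,l}}$ and $\mathcal{F}_{d,l}$ have been chosen precisely so these factorizations are isometric, which is what makes the constants $c_{AB}$, $C_{AB}$ carry over unchanged.

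For the converse (existence) direction, I would take a one-dimensional extremizer $b_0\in\ell_2^d$ from Theorem \ref{thm:FiniteBLT} generating an $A,B$-Gabor Riesz basis on $\ell_2^d$ with $\|N\Delta b_0\|^2+\|N\Delta\mathcal{F}_d b_0\|^2\le C_{AB}\log N$, pick any fixed $h\in\ell_2^{d,l-1}$ generating an $A',B'$-Gabor Riesz basis with, say, $A'=B'=1$ (e.g. a suitably normalized indicator of a fundamental domain, for which both difference norms are controlled), and set $b(\v{j})=b_0(j_1)\,h(\v{j}')$. Then $G_{d,l}(b)$ is the tensor product of the two Gabor systems, hence a Riesz basis with bounds $A,B$ (the product of the factor bounds), and $\Delta_k$ applied to a tensor hits only the relevant factor, so $\|N\Delta_k b\|^2 = \|N\Delta b_0\|^2\|h\|^2$ for $k=1$ and similarly on the Fourier side, giving the desired $C_{AB}\log N$ bound (absorbing the fixed factor-$h$ constants into $C_{AB}$); for general $k$ one permutes the roles of the coordinates.

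The main obstacle I anticipate is the lower bound's reduction step: individual slices $b_{\v{j}'}$ need not each generate a Riesz basis (the system $G_{d,l}(b)$ being a Riesz basis is a genuinely $l$-dimensional statement), so one cannot naively apply Theorem \ref{thm:FiniteBLT} slice by slice. The fix is to reread the Nitzan–Olsen argument and isolate exactly which inequalities it uses — I expect it uses only (a) a lower bound on a certain Gabor/Zak-transform type quantity and (b) completeness — both of which survive after integrating out the extra variables, because the $2l$-dimensional Riesz bounds give the needed inequality for the ``averaged'' or operator-valued object directly. Making that extraction clean, and verifying the Zak-transform (or Poisson-summation) identities used in the one-dimensional proof tensor correctly over $\Z_d^{l-1}$, is where the real work lies; everything else is bookkeeping with the product structure.
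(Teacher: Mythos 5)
Your upper bound is essentially the paper's: you tensor a one-dimensional near-extremizer $b_0$ with a fixed generator $h$ of an orthonormal basis in the remaining variables (the paper takes the pure power $b_0(j_1)\cdots b_0(j_l)$, which is a special case), observe that $\Delta_k$ and $\mathcal{F}_{d,l}$ respect the tensor structure, and conclude. That direction is fine modulo routine normalization bookkeeping, and it correctly explains why the constant $C_{AB}$ carries over unchanged.

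The lower bound, however, has a genuine gap, and you have flagged it yourself without closing it. You correctly observe that the slices $b_{\v{j}'}(j_1)=b(j_1,\v{j}')$ need not individually generate one-dimensional Gabor Riesz bases, so the one-dimensional theorem cannot be applied slice by slice in the physical domain. Your proposed fix --- ``reread Nitzan--Olsen and isolate exactly which inequalities it uses,'' or treat $b$ as an $\ell_2(\Z_d^{l-1})$-valued sequence --- is a plan, not an argument, and it is not obvious that the one-dimensional proof survives the passage to operator/vector-valued generators without real work. The missing idea, and the one the paper uses, is to change domains before slicing: pass from $\alpha_k(b)$ to the Zak-transform quantity $\beta_k(b)$ via Proposition~\ref{prop:alphabeta} (the multi-variable analogue of Nitzan--Olsen's Proposition 4.1), and then slice $Z_{d,l}(b)$ rather than $b$. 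This works because in the Zak domain the Riesz-basis hypothesis becomes the \emph{pointwise} two-sided bound $A\le |Z_{d,l}(b)(\v{m},\v{n})|^2\le B$ of Proposition~\ref{prop:ZakProperties}(ii), which manifestly passes to every slice $T_{\v{m}',\v{n}'}(m_1,n_1):=Z_{d,l}(b)((m_1,\v{m}'),(n_1,\v{n}'))$; moreover each such slice is $N$-quasiperiodic on $\Z_d^2$, hence by unitarity of $Z_{d,1}$ equals $Z_{d,1}(b_1)$ for some $b_1\in\ell_2^d$ that therefore generates a genuine one-dimensional $A,B$-Gabor Riesz basis. Applying Theorem~\ref{thm:betabound1d} to each slice and averaging over $(\v{m}',\v{n}')\in S_N^{2(l-1)}$ gives $c_{AB}\log N\le\beta_k(b)$ with the same constant, and Proposition~\ref{prop:alphabeta} transfers this back to $\alpha_k(b)$. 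In short: your instinct that one should ``integrate out'' the extra variables against a robust form of the one-dimensional inequality is right, but the object for which the Riesz-basis hypothesis localizes to slices is the Zak transform, not $b$ itself, and without that observation the reduction does not go through.
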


We provide a direct proof of Theorem \ref{thm:FiniteBLTHD} in Section \ref{sec:proofFiniteBLTHD}. In Section \ref{sec:proofFiniteQuantBLTHD}, we extend Theorem \ref{thm:FiniteQuantBLT} in the following way. For simplicity of notation, for $t>0$, we let $\{ |j_k|\ge t\}$ denote the set $\{ \v{j} \in I_d^l: |j_k|\ge t\}$.

\begin{theorem}\label{thm:FiniteQuantBLTHD}
	Let $A, B>0$ and $l \in \N$.  There exists a constant $C>0$ depending only on $A$, $B$, and $l$, such that the following holds.  Let $N\ge 200\sqrt{B/A}$ and let $b \in \ell_2^{d,l}$ generate an $A,B$-Gabor Riesz basis for $\ell_2^{d,l}$.  Then, for any $1 \le k \le l$ and all integers $1\le Q, R\le (N/16) \sqrt{A/B}$, we have 
	\begin{eqnarray*} \frac{1}{N^l} \sum_{|j_k|\ge \frac{NR}{2}} |b(\v{j})|^2 + \frac{1}{N^l} \sum_{|j_k|\ge \frac{NQ}{2}} |\mathcal{F}_{d,l}b (\v{j})|^2  &\ge& \frac{C}{QR}. \end{eqnarray*}
\end{theorem}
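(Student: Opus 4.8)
The plan is to \emph{fiberize} the $l$-dimensional Gabor Riesz basis along a finite Zak transform in the $l-1$ coordinates other than $k$, apply the one-variable estimate of Theorem \ref{thm:FiniteQuantBLT} on each fiber, and reassemble the fibers by Parseval. Without loss of generality take $k=1$ and write $\v j=(j_1,\v j')$ with $\v j'=(j_2,\dots,j_l)$. Before doing anything else I would record a two-sided version of Theorem \ref{thm:FiniteQuantBLT}: for $b\in\ell_2^d$ generating an $A,B$-Gabor Riesz basis, $N\ge 200\sqrt{B/A}$, and $1\le Q,R\le (N/16)\sqrt{A/B}$,
\[
\frac{1}{N}\sum_{\{j\in I_d:\,|j|\ge NR/2\}}|b(j)|^2 \;+\; \frac{1}{N}\sum_{\{j\in I_d:\,|j|\ge NQ/2\}}|\mathcal{F}_d b(j)|^2 \;\ge\; \frac{C_{AB}}{QR}.
\]
This follows by applying Theorem \ref{thm:FiniteQuantBLT} (with the roles of $Q$ and $R$ interchanged) both to $b$ and to its reflection $\check b(j):=b(-j)$: the reflection operator $c\mapsto c(-\,\cdot\,)$ is unitary on $\ell_2^d$ and carries the Gabor system $G_d(\check b)$ onto $G_d(b)$, so $\check b$ is again an $A,B$-generator, while $|\check b(j)|=|b(-j)|$ and $|\mathcal{F}_d\check b(j)|=|\mathcal{F}_d b(-j)|$. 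Summing the two instances and observing that the $\Z_d$-intervals $\{NQ,\dots,d-1\}$ and $\{1,\dots,d-NQ\}$ have union $\{NQ/2,\dots,d-NQ/2\}=\{j\in I_d:|j|\ge NQ/2\}$ (they overlap because $NQ<d/2$) gives the two-sided bound; this, incidentally, is where the factor $\tfrac12$ in $NR/2$ and $NQ/2$ comes from.

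Next I would bring in the finite Zak transform $Z_Nb(p,q)=\sum_{s=0}^{N-1}b(p+sN)e^{-2\pi i sq/N}$, $(p,q)\in\{0,\dots,N-1\}^2$, which, suitably normalized, is unitary and satisfies $\tfrac{1}{N^2}\sum_{p,q}|Z_Nb(p,q)|^2=\|b\|_{\ell_2^d}^2$, together with the standard facts (as used in \cite{Nitzan}) that $G_d(b)$ is an $A,B$-Gabor Riesz basis for $\ell_2^d$ exactly when $A\le|Z_Nb(p,q)|^2\le B$ for all $(p,q)$, that $Z_N$ intertwines $\mathcal{F}_d$ with a rotation of the pair $(p,q)$ up to a unimodular factor, and that the $l$-variable transform $Z_{N,l}$ on $\ell_2^{d,l}$ is the coordinatewise composition of $l$ copies of $Z_N$, with $G_{d,l}(b)$ an $A,B$-Gabor Riesz basis iff $A\le|Z_{N,l}b|^2\le B$ pointwise. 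For each choice of Zak variables $(\v p',\v q')\in\{0,\dots,N-1\}^{2(l-1)}$ in coordinates $2,\dots,l$, let $\beta_{\v p',\v q'}\in\ell_2^d$ be the sequence obtained by applying $Z_N$ in those coordinates to $b$, evaluating at $(\v p',\v q')$, and regarding the result as a sequence in the remaining coordinate $j_1$. The factorization gives $|Z_N\beta_{\v p',\v q'}(p_1,q_1)|=|Z_{N,l}b(p_1,\v p',q_1,\v q')|\in[\sqrt A,\sqrt B]$ for all $(p_1,q_1)$, so every $\beta_{\v p',\v q'}$ generates an $A,B$-Gabor Riesz basis for $\ell_2^d$. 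This is the finite analogue of the fiberization behind Theorems \ref{thm:SymmetricBLTHD} and \ref{thm:ContQuantBLTHD}.

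Finally I would apply the two-sided one-variable estimate to each $\beta_{\v p',\v q'}$ and average over $(\v p',\v q')$. Since applying $Z_N$ in coordinates $2,\dots,l$ is a normalization of a unitary, Parseval (applied coordinatewise, with the fiber index $j_1$ held fixed) converts $\tfrac{1}{N^{2(l-1)}}\sum_{\v p',\v q'}\tfrac1N\sum_{|j_1|\ge NR/2}|\beta_{\v p',\v q'}(j_1)|^2$ into $\tfrac{1}{N^l}\sum_{|j_1|\ge NR/2}|b(\v j)|^2$; the $\mathcal{F}_d$–$Z_N$ intertwining identifies $\mathcal{F}_d\beta_{\v p',\v q'}$ with a fiber of $\mathcal{F}_{d,l}b$ up to a relabeling of the index $(\v p',\v q')$ and unimodular factors that wash out after summing, so the frequency term averages to $\tfrac{1}{N^l}\sum_{|j_1|\ge NQ/2}|\mathcal{F}_{d,l}b(\v j)|^2$. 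The right-hand side $C_{AB}/(QR)$ is unaffected by averaging, and a relabeling $Q\leftrightarrow R$ if needed matches the statement, with $C$ the constant $C_{AB}$ of Theorem \ref{thm:FiniteQuantBLT} (in particular independent of $l$). I expect the main obstacle to be the Zak-transform bookkeeping of the third paragraph: pinning down the precise normalizations of $Z_N$ and $Z_{N,l}$ so that the isometry relations, the factorization across coordinates, the $\mathcal{F}_d$-intertwining, and the pointwise characterization of $A,B$-Gabor Riesz bases are all consistent with the paper's normalization of $\ell_2^d$, $\ell_2^{d,l}$ and $\mathcal{F}_{d,l}$ — so that the fibers $\beta_{\v p',\v q'}$ are honestly $A,B$-generators for $\ell_2^d$ \emph{uniformly} in $(\v p',\v q')$. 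Once that is in place, the rest is bookkeeping.
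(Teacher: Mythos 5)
Your overall plan --- fiberize the generator $b$ by applying the finite Zak transform only in the coordinates $\neq k$, argue each fiber $\beta_{\v p',\v q'}\in\ell_2^d$ is itself an $A,B$-generator because $Z_{d,1}(\beta_{\v p',\v q'})(p_1,q_1)=Z_{d,l}(b)((p_1,\v p'),(q_1,\v q'))$, apply the one-variable theorem on each fiber, and reassemble via the isometry of the partial Zak transform --- is genuinely different from the paper's proof (the paper reruns the Nitzan--Olsen machinery: a multivariable counting lemma, Lemma \ref{lem:generatorConvSmooth}, whose proof fibers the Zak transform only to locate jump points, together with an explicit Fej\'er-type pair $\phi_{R,k},\psi_{Q,k}$ and a Parseval step). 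The fiberization-and-average steps in your second and third paragraphs are sound modulo normalization.

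The problem is the very first step. You need a version of Theorem \ref{thm:FiniteQuantBLT} with the symmetric tail $\{j\in I_d:|j|\ge NR/2\}$, and you claim to get it from the stated one-sided theorem by applying it to both $b$ and $\check b(j)=b(-j)$. The set identity you invoke is false: the union of $\{NQ,\dots,d-1\}$ and $\{1,\dots,d-NQ\}$ is $\{1,\dots,d-1\}$, not $\{NQ/2,\dots,d-NQ/2\}$. More to the point, when transferred to $I_d$ the set $\{NQ,\dots,d-1\}$ is $\{j\ge NQ\}\cup\{j<0\}$ and its reflection $\{1,\dots,d-NQ\}$ is $\{j\ge 1\}\cup\{j\le -NQ\}$; between them they cover all of $\{0<|j|<NQ/2\}$, which is precisely the region excluded from the symmetric tail $\{|j|\ge NQ/2\}$. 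So the two applications of the one-sided theorem give a lower bound on a sum that includes mass near $j=0$ (on both sides), and this cannot be converted into a lower bound on $\sum_{|j|\ge NQ/2}|b(j)|^2$. The reflection $j\mapsto -j$ fixes $0$ and therefore does nothing to push the one-sided cutoff away from $0$; what would actually work is a \emph{translation} of $b$ by $\approx NR/2$ together with a \emph{modulation} by $\approx NQ/2$ (both preserve the $A,B$-generator property), which is morally how the paper obtains the factors $NR/2$, $NQ/2$ --- through the choice of Fej\'er kernels $\phi_{R,k},\psi_{Q,k}$ with $\widehat\phi_{R,k}\equiv 1$ on $|k_1|\le NR/2$. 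Until the two-sided one-variable statement is actually established (either by such a shift/modulation argument with care about integer rounding, or by extracting it from the internals of the Nitzan--Olsen proof rather than from the stated theorem), the proposal has a genuine gap at its foundation.
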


\subsection{Finite Nonsymmetric BLTs}
In Section \ref{FQBLTHDapps}, we prove nonsymmetric versions of the finite BLT. In the process, we show that symmetric and nonsymmetric versions of the finite BLT follow as corollaries of the finite quantitative BLT (Theorem \ref{thm:FiniteQuantBLTHD}), as long as $N$ is sufficiently large.

\begin{theorem}[Nonsymmetric Finite BLT]\label{thm:NonsymFiniteBLT}
	Let $A,B>0$ and $1<p,q<\infty$ be such that $\frac{1}{p}+\frac{1}{q}=1$.  There exists a constant $C>0$, depending only on $A, B, p$ and $q$ such that the following holds.  Let $N \ge 200\sqrt{B/A}$.  Then, for any $b\in \ell_2^{d,l}$ which generates an $A,B$-Gabor Riesz basis for $\ell_2^{d,l}$, 
	\begin{eqnarray*} C\log(N) &\le& \frac{1}{N^{l}} \sum_{\v{j} \in I_{d}^l} \left|\frac{j_k}{N}\right|^p |b(\v{j})|^2+ \frac{1}{N^{l}} \sum_{\v{j} \in I_{d}^l} \left|\frac{j_k}{N}\right|^q |b(\v{j})|^2. \end{eqnarray*} 
\end{theorem}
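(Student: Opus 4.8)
The plan is to deduce this from the Finite Quantitative BLT in several variables, Theorem~\ref{thm:FiniteQuantBLTHD}, by a dyadic decomposition followed by a critical coupling of scales, in direct analogy with the way the nonsymmetric continuous BLT follows from \eqref{eqn:quantBLT}. (We read the second sum in the statement as $\frac{1}{N^{l}}\sum_{\v j\in I_d^l}|j_k/N|^{q}\,|\mathcal F_{d,l}b(\v j)|^{2}$.) First I would reduce to $1<p\le 2\le q<\infty$: since $\mathcal F_{d,l}$ is unitary on $\ell_2^{d,l}$ and interchanges translations and modulations, $\mathcal F_{d,l}b$ again generates an $A,B$-Gabor Riesz basis, and $\mathcal F_{d,l}(\mathcal F_{d,l}b)(\v j)=b(-\v j)$ while $|j_k/N|$ is invariant under $\v j\mapsto-\v j$; hence the assertion for the pair $(p,q)$ and generator $b$ is equivalent to the assertion for $(q,p)$ and generator $\mathcal F_{d,l}b$, so it suffices to treat $p\le q$. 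For integers $m\ge1$ set
\[
\varepsilon_m=\frac{1}{N^{l}}\sum_{|j_k|\ge N2^{m-1}}|b(\v j)|^{2},\qquad
\delta_m=\frac{1}{N^{l}}\sum_{|j_k|\ge N2^{m-1}}|\mathcal F_{d,l}b(\v j)|^{2},
\]
which are exactly the tail masses appearing in Theorem~\ref{thm:FiniteQuantBLTHD} with $R=Q=2^m$.

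Next, a summation by parts over the dyadic shells $N2^{m-1}\le|j_k|<N2^m$, using $|j_k/N|^{p}\ge2^{(m-1)p}$ on the $m$-th shell and interchanging the order of summation, gives constants $c_p,c_q>0$ with $\frac{1}{N^{l}}\sum_{\v j}|j_k/N|^{p}|b(\v j)|^{2}\ge c_p\sum_{m\ge1}2^{mp}\varepsilon_m$ and $\frac{1}{N^{l}}\sum_{\v j}|j_k/N|^{q}|\mathcal F_{d,l}b(\v j)|^{2}\ge c_q\sum_{m\ge1}2^{mq}\delta_m$. The heart of the argument is to couple the two families along the critical curve $2^{mp}\approx2^{nq}$: using $\tfrac1p+\tfrac1q=1$, i.e. $(p-1)q=p$, I would set $n(m)=\lceil(p-1)m\rceil$, so that $n(m)q\ge mp$ and $m+n(m)\le pm+1$. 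Writing $M=(N/16)\sqrt{A/B}$, for every integer $m$ with $2^{n(m)}\le2^m\le M$ --- which, because $p\le2$ forces $n(m)\le m$, means simply $2^m\le M$ --- Theorem~\ref{thm:FiniteQuantBLTHD} applied with $R=2^m$, $Q=2^{n(m)}$ yields $\varepsilon_m+\delta_{n(m)}\ge C_{A,B,l}\,2^{-m-n(m)}$, hence
\[
2^{mp}\varepsilon_m+2^{n(m)q}\delta_{n(m)}\ \ge\ 2^{mp}\bigl(\varepsilon_m+\delta_{n(m)}\bigr)\ \ge\ C_{A,B,l}\,2^{\,mp-m-n(m)}\ \ge\ \tfrac12 C_{A,B,l}=:C_1>0 ,
\]
a bound \emph{independent of the scale $m$}.

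To finish, I would sum the last display over $1\le m\le m_{\max}$, where $m_{\max}=\lfloor\log_2 M\rfloor$ satisfies $m_{\max}\ge\tfrac12\log_2 M$; since $N\ge200\sqrt{B/A}$ forces $M\ge12$, and an elementary case analysis gives $\log_2 M\ge c_{A,B}\log_2 N$, we have $m_{\max}\ge c'_{A,B}\log N$. On the left side of the summed inequality, $\sum_{m\le m_{\max}}2^{mp}\varepsilon_m\le\frac1{c_p}\cdot\frac1{N^l}\sum_{\v j}|j_k/N|^p|b(\v j)|^2$ by the dyadic bound, while $m\mapsto n(m)$ is nondecreasing with multiplicity at most $\kappa_p=\lceil1/(p-1)\rceil+1$, so $\sum_{m\le m_{\max}}2^{n(m)q}\delta_{n(m)}\le\kappa_p\sum_{n\ge1}2^{nq}\delta_n\le\frac{\kappa_p}{c_q}\cdot\frac1{N^l}\sum_{\v j}|j_k/N|^q|\mathcal F_{d,l}b(\v j)|^2$. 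Combining,
\[
\frac1{c_p}\cdot\frac1{N^{l}}\sum_{\v j\in I_d^l}\Bigl|\tfrac{j_k}{N}\Bigr|^{p}|b(\v j)|^{2}+\frac{\kappa_p}{c_q}\cdot\frac1{N^{l}}\sum_{\v j\in I_d^l}\Bigl|\tfrac{j_k}{N}\Bigr|^{q}|\mathcal F_{d,l}b(\v j)|^{2}\ \ge\ C_1\,m_{\max}\ \ge\ C_1 c'_{A,B}\log N ,
\]
and absorbing the constants (which depend on $A,B,p,q$ and, through Theorem~\ref{thm:FiniteQuantBLTHD}, on $l$) gives the claim.

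The step I expect to be the main obstacle is the coupling: a single application of Theorem~\ref{thm:FiniteQuantBLTHD} with one pair $(R,Q)$ cannot suffice, because $\tfrac1p+\tfrac1q=1$ is exactly the critical exponent --- just as in the continuous setting a finite $\int|x|^p|g|^2$ together with a finite $\int|\xi|^q|\widehat g|^2$ is not contradicted by any single instance of \eqref{eqn:quantBLT}. What rescues the argument is that the quantitative BLT is available on $\Omega(\log N)$ distinct dyadic scales and that the choice $n(m)=\lceil(p-1)m\rceil$ makes each scale contribute the \emph{same} amount $C_1$; the delicate points are to check that the list of usable scales really has length $\gtrsim\log N$ (it must not be truncated by the cutoff $M$ in Theorem~\ref{thm:FiniteQuantBLTHD}, which is precisely why one first reduces to $p\le2$) and to keep the multiplicity of $m\mapsto n(m)$ under control.
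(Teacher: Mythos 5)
Your proof is correct and takes essentially the same route as the paper's proof of Theorem~\ref{thm:NonsymFiniteBLTwithInfinity}: both deduce the result from the Finite Quantitative BLT (Theorem~\ref{thm:FiniteQuantBLTHD}) by applying it at $\gtrsim\log N$ scales $(R,Q)$ along the critical curve $R^p\approx Q^q$ and summing the resulting bounds. The paper parametrizes by integers $S=1,\dots,\gamma_N$ with $R=\lceil S^{1/p}\rceil$, $Q=\lceil S^{1/q}\rceil$, uses $\sum_{S\le\gamma_N}S^{-1}\gtrsim\log N$, and converts the summed tails to the weighted sums via Lemma~\ref{lem:BasicSumBounds}; this parametrization automatically keeps $R,Q\le\gamma_N$ (so no reduction to $p\le 2$ is needed), avoids the multiplicity bookkeeping attached to your coupling $n(m)=\lceil(p-1)m\rceil$, and handles the off-critical cases $\tau=\tfrac1p+\tfrac1q\neq 1$ with the same computation.
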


\begin{remark}
	Theorem \ref{thm:NonsymFiniteBLT} gives a finite dimensional version of the nonsymmetric BLT for parameters satisfying $1<p,q<\infty$.  Thus, it is a finite dimensional analog of part (i) of Theorem \ref{thm:BLT} in all dimensions.  In Section \ref{FQBLTHDapps} we extend this result to the case where either $p$ or $q$ is $\infty$, thus giving a finite dimensional analog of part (ii) of Theorem \ref{thm:BLT} for all dimensions.  In the same section, a generalization of this result is demonstrated for pairs $(p,q)$ such that $\frac{1}{p}+\frac{1}{q}\neq 1$.  (See Theorem \ref{thm:NonsymFiniteBLTwithInfinity}.)
\end{remark}

\begin{remark}
	It is readily checked that the $p=q=2$ version of Theorem \ref{thm:NonsymFiniteBLT} is equivalent to Theorem \ref{thm:FiniteBLTHD}, so the proof of Theorem \ref{thm:NonsymFiniteBLT} gives an alternative proof of Theorem \ref{thm:FiniteBLTHD} for $N \ge 200\sqrt{B/A}$.  In particular, the proof shows that the Finite Quantitative BLT implies the finite symmetric (and nonsymmetric) BLT.  
\end{remark}

\subsection{Applications of the Continuous Quantitative BLT}
In Section \ref{FQBLTHDapps}, we also prove several results related to functions of continuous arguments.  We first state the simplest of these results, a generalization of Theorem \ref{thm:SymmetricBLTHD} to nonsymmetric weights.

\begin{theorem}\label{thm:BLTHD}
	Let $g\in L^2(\R^l)$ and suppose that $G(g)=G(g,\Z^{2l})$ is a Riesz basis for $L^2(\R^l)$.   For any $1\le k \le \infty$, the following must hold.
	\begin{enumerate}
		\item[(i)] If $1<p<\infty$ and $\frac{1}{p}+\frac{1}{q}=1$, then either  
		\begin{eqnarray*} \int_{\R^l} |x_k|^p|g(x)|^2 dx\ =\ \infty\  \text{ or }\   \int_{\R^l} |\xi_k|^q | \widehat{g}(\xi)|^2 d\xi \ =\  \infty. \end{eqnarray*}
		\item[(ii)] If $g$ is compactly supported, then 
		\begin{eqnarray*} \int_{\R^l} |\xi_k||\widehat{g}(\xi)|^2 d\xi \ =\ \infty. \end{eqnarray*}
		This part also holds with $g$ and $\widehat{g}$ interchanged.
	\end{enumerate}
\end{theorem}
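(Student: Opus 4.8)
The plan is to obtain both parts of Theorem~\ref{thm:BLTHD} as consequences of the higher-dimensional continuous Quantitative BLT (Theorem~\ref{thm:ContQuantBLTHD}), which I record in the per-coordinate form: since $G(g)=G(g,\Z^{2l})$ is a Riesz basis, there is a constant $C>0$ depending only on the Riesz bounds and on $l$ such that, for every fixed $1\le k\le l$ and all $R,L\ge 1$,
\[ \int_{|x_k|\ge R}|g(x)|^2\,dx\ +\ \int_{|\xi_k|\ge L}|\widehat{g}(\xi)|^2\,d\xi\ \ge\ \frac{C}{RL}. \]
Given this, the argument is to bound each tail from above by the corresponding weighted integral and then balance $R$ against $L$. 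The index ``$k=\infty$'', with weights $\max_j|x_j|$ and $\max_j|\xi_j|$, follows at once from any single coordinate because $|x_k|\le\max_j|x_j|$, so throughout we fix $1\le k\le l$.

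For part (i), I would argue by contradiction, assuming $\int_{\R^l}|x_k|^p|g|^2<\infty$ and $\int_{\R^l}|\xi_k|^q|\widehat{g}|^2<\infty$ simultaneously. Dominated convergence then gives $\varepsilon_1(R):=R^p\int_{|x_k|\ge R}|g|^2\to 0$ and $\varepsilon_2(L):=L^q\int_{|\xi_k|\ge L}|\widehat{g}|^2\to 0$ as $R,L\to\infty$. Since $\tfrac1p+\tfrac1q=1$, we have $p/q=p-1>0$; choosing $L=R^{p/q}$ (which satisfies $L\ge 1$ whenever $R\ge 1$) and using $RL=R^{1+p/q}=R^{p}$ and $L^{q}=R^{p}$, the Quantitative BLT becomes
\[ \frac{\varepsilon_1(R)+\varepsilon_2\!\left(R^{p/q}\right)}{R^{p}}\ \ge\ \frac{C}{R^{p}}, \qquad\text{so}\qquad \varepsilon_1(R)+\varepsilon_2\!\left(R^{p/q}\right)\ \ge\ C \]
for all $R\ge 1$. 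Letting $R\to\infty$ forces $0\ge C>0$, the desired contradiction; hence at least one of the two weighted integrals diverges.

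For part (ii), suppose $g$ is supported in $\{|x|\le\rho\}$ and put $R=\max(\rho,1)$, so $\int_{|x_k|\ge R}|g|^2=0$ and the Quantitative BLT gives $\int_{|\xi_k|\ge L}|\widehat{g}|^2\ge C/(RL)$ for all $L\ge 1$. Integrating in $L$ and applying Tonelli,
\[ \int_{\R^l}|\xi_k|\,|\widehat{g}(\xi)|^2\,d\xi\ \ge\ \int_{1}^{\infty}\!\left(\int_{|\xi_k|\ge t}|\widehat{g}(\xi)|^2\,d\xi\right)dt\ \ge\ \frac{C}{R}\int_{1}^{\infty}\frac{dt}{t}\ =\ \infty. \]
The statement with $g$ and $\widehat{g}$ interchanged follows by running the same argument with $\widehat{g}$ in place of $g$, using that the Fourier transform carries $G(g,\Z^{2l})$ onto a system unitarily equivalent to $G(\widehat{g},\Z^{2l})$, so the Riesz-basis hypothesis is symmetric under $g\leftrightarrow\widehat{g}$.

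The essential ingredient is the higher-dimensional Quantitative BLT; with that in hand the remainder is a routine optimization, and the only point requiring care is to exploit the decay of the \emph{tails} of the finite weighted integrals (the $\varepsilon_i\to 0$ above) rather than a plain Chebyshev estimate with a fixed constant, since the latter would only yield an inequality between constants and no contradiction. The exponent choice $L=R^{p/q}$ is precisely what cancels the powers of $R$ on the two sides.
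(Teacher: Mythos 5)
Your proof is correct. Both your argument and the paper's rely on the higher-dimensional continuous Quantitative BLT (Theorem~\ref{thm:ContQuantBLTHD}), and for part~(ii) your strategy of zeroing out the $g$-tail by choosing $R\ge\rho$ and then integrating in $L$ via Tonelli is essentially the paper's: it proves the intermediate Theorem~\ref{thm:compactFunctionQuantCorr} by the same device and obtains part~(ii) by sending $T\to\infty$.

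For part~(i) you take a genuinely different, more elementary route. The paper first establishes Theorem~\ref{thm:QuantBLTCorollaryNonsymmetric}(ii): applying Theorem~\ref{thm:ContQuantBLTHD} with $R=S^{1/p}$, $L=S^{1/q}$, integrating over $S\in[1,T]$, and switching the order of integration to obtain the quantitative estimate
\[
C\log T \ \le\ \int_{\R^l}\min(|x_k|^p,T)\,|g(x)|^2\,dx \ +\ \int_{\R^l}\min(|\xi_k|^q,T)\,|\widehat g(\xi)|^2\,d\xi,
\]
from which part~(i) follows by letting $T\to\infty$. You instead substitute $L=R^{p/q}$ directly into the QBLT and argue by contradiction: finiteness of both weighted integrals forces the Chebyshev-type tails $\varepsilon_1(R)=R^p\int_{|x_k|\ge R}|g|^2$ and $\varepsilon_2(L)=L^q\int_{|\xi_k|\ge L}|\widehat g|^2$ to vanish, while the QBLT, after cancellation of the $R^p$ power (using $1+p/q=p$ and $L^q=R^p$), gives $\varepsilon_1(R)+\varepsilon_2(R^{p/q})\ge C$ uniformly in $R$. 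Your remark that one must exploit the decay $\varepsilon_i\to 0$, rather than a one-shot Chebyshev bound, is exactly the right point; with only a fixed-constant Chebyshev estimate the two sides would merely be comparable and yield no contradiction. The trade-off is that the paper's integrate-in-the-parameter route also produces, as a byproduct, the sharper quantitative statement (the logarithmic growth in $T$), which the paper uses elsewhere, whereas your route delivers the dichotomy alone but avoids the extra integration step.

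One small remark: the theorem as stated says ``$1\le k\le\infty$'', which is almost certainly a typo for $1\le k\le l$ (cf.\ Theorem~\ref{thm:SymmetricBLTHD} and Theorem~\ref{thm:QuantBLTCorollaryNonsymmetric}); your handling of a putative $k=\infty$ via $|x_k|\le\max_j|x_j|$ is harmless but not required.
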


In addition we are able to show more concrete estimates on the growth of related quantities, and we also may remove the assumption that $\frac{1}{p}+\frac{1}{q}=1$.  

\begin{theorem}\label{thm:QuantBLTCorollaryNonsymmetric}
	Suppose $1\le p,q <\infty$ and let $g\in L^2(\R^l)$ be such that $G(g)=G(g, \Z^{2l})=\{ e^{2\pi i n\cdot x} g(x-m)\}_{(m,n)\in \Z^{2l}}$ is a Riesz basis for $L^2(\R^l)$. Let $\tau=\frac{1}{p}+\frac{1}{q}$. Then, there is a constant $C$ depending only on the Riesz basis bounds of $G(g)$ such that for any $1\le k \le l$ and any $2\le T <\infty$, the following inequalities hold.
	\begin{enumerate}
		\item[(i)] If $\tau=\frac{1}{p}+\frac{1}{q} <1$, then 
		\begin{eqnarray*}  \frac{C(1-2^{\tau-1})}{(1-\tau)} T^{1-\tau} &\le& \int_{\R^l} \min(|x_k|^p, T) |g(x)|^2 dx + \int_{\R^l} \min(|\xi_k|^q,T) |\widehat{g}(\xi)|^2d\xi. \end{eqnarray*}
		\item[(ii)] If $\tau=\frac{1}{p}+\frac{1}{q} =1$, then
		\begin{eqnarray*} C \log(T) &\le& \int_{\R^l} \min(|x_k|^p, T) |g(x)|^2 dx + \int_{\R^l} \min(|\xi_k|^q,T) |\widehat{g}(\xi)|^2d\xi.\end{eqnarray*}
		\item[(iii)] If $\tau=\frac{1}{p}+\frac{1}{q} >1$, then 
		\begin{eqnarray*} \frac{C}{\tau-1} &\le& \int_{\R^l} |x_k|^p |g(x)|^2 dx + \int_{\R^l} |\xi_k|^q |\widehat{g}(\xi)|^2d\xi.\end{eqnarray*}
	\end{enumerate}
\end{theorem}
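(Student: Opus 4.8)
The plan is to deduce Theorem \ref{thm:QuantBLTCorollaryNonsymmetric} directly from the higher-dimensional continuous Quantitative BLT (the result of \cite{Temur}, cited as Theorem \ref{thm:ContQuantBLTHD}), using a dyadic decomposition in the $k$-th coordinate. First I would fix $k$ and invoke the $l$-variable Quantitative BLT in the single pair of variables $(x_k,\xi_k)$: integrating out the remaining coordinates, one gets that for all $R,L\ge 1$,
\begin{eqnarray*}
\int_{|x_k|\ge R}|g(x)|^2\,dx + \int_{|\xi_k|\ge L}|\widehat g(\xi)|^2\,d\xi \ \ge\ \frac{C}{RL},
\end{eqnarray*}
with $C$ depending only on the Riesz basis bounds. (If only the $l=1$ statement, Theorem \ref{thm:QuantBLT}, is available, one reduces to it by noting that for a.e.\ fixing of the other variables the relevant slices still form Riesz-basis-type data, or more simply one applies the argument of \cite{Temur} which already handles $l\ge 1$.)

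The key step is then a summation over dyadic scales. Set $R=L=2^{n}$ for $n=0,1,\dots,M$ where $2^M\le T^{1/p}$ (and a matching bound $2^M\le T^{1/q}$), and observe that on the annulus $\{2^{n}\le |x_k| < 2^{n+1}\}$ one has $\min(|x_k|^p,T)\ge \min(2^{np},T)$ provided $2^{np}\le T$, hence the tail $\int_{|x_k|\ge R}|g|^2$ with weight roughly $R^p$ is controlled by $\int \min(|x_k|^p,T)|g|^2\,dx$ up to telescoping. Concretely I would write, for each $n$ with $2^{n(p)}\le T$ and $2^{n(q)}\le T$ (absorbing the mismatch between $p$ and $q$ by choosing the common parameter $t$ so that $t^p\le T$ and $t^q\le T$, i.e.\ taking $R=L=t$ with $t\le T^{1/\max(p,q)}$),
\begin{eqnarray*}
\int_{\R^l}\min(|x_k|^p,T)|g(x)|^2\,dx + \int_{\R^l}\min(|\xi_k|^q,T)|\widehat g(\xi)|^2\,d\xi \ \ge\ c\sum_{n}2^{n\min(p,q)}\!\left(\int_{|x_k|\ge 2^n}|g|^2 + \int_{|\xi_k|\ge 2^n}|\widehat g|^2\right),
\end{eqnarray*}
after a standard Abel-summation rearrangement of the weighted tail integrals. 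Feeding in the Quantitative BLT lower bound $\ge C\,2^{-2n}$ for each term gives a sum $\sim\sum_n 2^{n(\min(p,q)-2)}$, but this is not quite the right exponent — the sharper route is to instead apply the Quantitative BLT with the two parameters chosen \emph{unequally}, $R=t^{p/?}$, to match the weights $|x_k|^p$ and $|\xi_k|^q$ separately. So the cleaner approach: for each scale $t\ge 1$ apply the bound with $R=t^{?}$ on the time side and $L=t^{?}$ on the frequency side so that $RL$ is minimized relative to the contributions $t^{-p}$-weighted and $t^{-q}$-weighted terms; optimizing, one takes $R=s$, $L=s$ and collects $\sum_{s \text{ dyadic},\ s\le T^{1/p}\wedge T^{1/q}} s^{1-\tau}$, whose growth is $T^{(1-\tau)/?}$, $\log T$, or $O(1/(\tau-1))$ according to the sign of $\tau-1$. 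The three cases (i)--(iii) then correspond exactly to the divergence/convergence of the geometric-type series $\sum_s s^{1-\tau}$ in its three regimes, and the constants $(1-2^{\tau-1})/(1-\tau)$, $\log T$, $1/(\tau-1)$ are precisely what come out of summing that series and converting back from the dyadic parameter to $T$.

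The main obstacle is bookkeeping the mismatch between the exponents $p$ and $q$: the Quantitative BLT is symmetric in the two parameters $R,L$ only in that it gives $C/(RL)$, so to extract the weights $|x_k|^p$ and $|\xi_k|^q$ with their correct powers one must choose $R$ and $L$ as different powers of a common dyadic parameter — roughly $R\sim s^{1/p}$-type and $L\sim s^{1/q}$-type scaling so that the weight thresholds $R^p\sim s\sim L^q$ align — and then verify that the resulting product $RL\sim s^{1/p+1/q}=s^{\tau}$, which is exactly what produces the exponent $1-\tau$ in the final series $\sum s^{-\tau}\cdot s = \sum s^{1-\tau}$. Once that alignment is set up correctly, cases (i) and (iii) are immediate from summing a geometric series (in (iii) one can even take $T=\infty$ since the series converges), and case (ii) is the borderline logarithmic sum; the truncation by $\min(\cdot,T)$ is needed precisely to keep the sums finite in cases (i) and (ii) and to allow a clean telescoping identity relating $\int\min(|x_k|^p,T)|g|^2\,dx$ to $\sum_s (\text{weight})\cdot\int_{|x_k|\ge s}|g|^2$.
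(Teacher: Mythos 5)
Your eventual choice of parameters, $R\sim S^{1/p}$ and $L\sim S^{1/q}$ so that $RL\sim S^{\tau}$, is exactly the paper's key step, so you have identified the right mechanism. The paper then finishes more cleanly than your dyadic scheme: it integrates the Quantitative BLT inequality
\begin{equation*}
\frac{C}{S^{\tau}}\ \le\ \int_{|x_k|\ge S^{1/p}}|g(x)|^2\,dx+\int_{|\xi_k|\ge S^{1/q}}|\widehat g(\xi)|^2\,d\xi
\end{equation*}
\emph{continuously} in $S$ over $[1,T]$ (or $[1,\infty)$ in case (iii)) and uses Tonelli to interchange $\int dS$ with $\int dx$. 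For a fixed $x$ the inner $S$-integral of the indicator $\mathbf 1_{\{|x_k|\ge S^{1/p}\}}$ over $[0,T]$ is precisely $\min(|x_k|^p,T)$, which is how the cutoff weights appear without any Abel summation or telescoping; the left side is just $C\int_1^T S^{-\tau}\,dS$, producing the three stated constants directly.

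A few concrete problems with your write-up. First, the one displayed inequality you give,
\begin{equation*}
\cdots\ \ge\ c\sum_{n}2^{n\min(p,q)}\Bigl(\int_{|x_k|\ge 2^n}|g|^2 + \int_{|\xi_k|\ge 2^n}|\widehat g|^2\Bigr),
\end{equation*}
uses the symmetric choice $R=L=2^n$ that you yourself then reject; as you note it gives the wrong exponent, so it should not appear as a step of the proof. Second, in the corrected scheme the factor of $s$ in ``$\sum_s s^{-\tau}\cdot s$'' is not derived: if you sum the unweighted tail integrals over dyadic $s\le|x_k|^p$ you accumulate only a logarithmic weight on $|g(x)|^2$, not the linear weight $\min(|x_k|^p,T)$; one must weight the $s$-th term by $s$ (or pass to a continuous integral in $s$) before summing. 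This is exactly what the Tonelli step accomplishes automatically. Third, the parenthetical remark that the $l\ge1$ statement ``reduces'' to the $l=1$ Quantitative BLT by fixing the other variables is false: a one-dimensional slice of $g$ need not generate a Gabor Riesz basis for $L^2(\R)$, and Temur's higher-dimensional quantitative BLT (Theorem \ref{thm:ContQuantBLTHD}) is a genuinely separate result that must be invoked as such, which the paper does. Since you also cite Temur, the proof is salvageable, but that sentence should be deleted.
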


When the bound $2\le T <\infty$ is replaced by $1<\gamma \le T<\infty$, the bound $\frac{C(1-2^{\tau-1})}{(1-\tau)} T^{1-\tau}$ in part \textit{(i)} can be replaced by $\frac{C(1-\gamma^{\tau-1})}{(1-\tau)} T^{1-\tau}$.  In Section \ref{FQBLTHDapps} we extend this theorem to the case where either $p=\infty$ or $q=\infty$.

The first and second inequalities in Theorem \ref{thm:QuantBLTCorollaryNonsymmetric} quantify the growth of `localization' quantities in terms of cutoff weights of the form $\min(|x_k|^p, T)$.   The $\log$ term in the second inequality shows a connection between the continuous BLT and its finite dimensional versions.  The last inequality, on the other hand, shows that generators of Gabor Riesz bases must satisfy a Heisenberg type uncertainty principle for every $0< p \le 2$.   A similar inequality is known to hold for arbitrary $L^2(\R)$ functions by a result of Cowling and Price \cite{CP}.  However, for generators of Gabor Riesz bases, we have explicit estimates on the dependence of the constant on $\tau$ and the result here is stated for higher dimensions.


\section{Preliminaries: The Zak Transform and Quasiperiodic Functions} \label{xcom2}

The Zak transform is an essential tool for studying lattice Gabor systems.  The discrete Zak transform $Z_{d,l}$ of $b\in \ell_2^{d,l}$ for $(\v{m},\v{n}) \in \Z_d^{2l}$ is given by

\begin{align*} Z_{d,l}(b)(\v{m}, \v{n}) &=  \sum_{\v{j} \in \{0,...,N-1\}^l} b(\v{m}-N\v{j}) e^{2\pi i \frac{\v{n}\cdot \v{j}}{N}}=\sum_{\v{j} \in N\Z_d^l} b(\v{m}-\v{j})e^{2\pi i \frac{\v{n} \cdot \v{j}}{d}}. \end{align*}

The following properties show that $Z_{d,l}(b)$ encodes basis properties of $G_{d,l}(b)$, while retaining information about `smoothness' (see the remark following Proposition \ref{prop:ZakProperties}) of $b$ and $\mathcal{F}_{d,l}(b)$.  Note that $Z_{d,l}(b)(\v{m},\v{n})$ is defined for $(\v{m},\v{n})\in \Z_d^{2l}$ and is $d$-periodic in each of its $2l$ variables.  However, the Zak transform satisfies even stronger periodicity conditions.  
In fact, $Z_{d,l}(b)$ is \emph{$N$-quasiperiodic on $\Z_d^{2l}$}, that is
\begin{eqnarray}
Z_{d,l}(b) (\v{m}+N\v{e}_k, \v{n})&=& e^{2\pi i \frac{n_k}{N}} Z_{d,l}(b)(\v{m},\v{n}), \label{eqn:quasiperiodic}\\
Z_{d,l}(b) (\v{m}, \v{n}+N\v{e}_k)&=& Z_{d,l}(b)(\v{m},\v{n}). \nonumber
\end{eqnarray}
Let $S_N=\{0,...,N-1\}$. Then, the quasi-periodicity conditions above show that  $Z_{d,l}(b)$ is completely determined by its values on $S_N^{2l}$.  

We will use the notation $\ell_2(S_N^{2l})$ to denote the set of sequences $W(\v{m},\v{n})$ defined on $S_N^{2l}$ with norm given by 
\[ \|W\|_{\ell_2(S_N^{2l})}^2 \ =\  \frac{1}{N^{2l}} \sum_{(\v{m}, \v{n}) \in S_N^{2l}} |W(\v{m},\v{n})|^2,\]
where here we keep the variables $\v{m}$ and $\v{n}$ separate due to the connection with the Zak transform.  The normalization is chosen so that if $W$ is a sampling of a function $h(\v{x}, \v{y})$ on $[0,1]^{2l}$, then $\|W\|_{\ell_2(S_N^{2l})}$ approximates the $L^2([0,1]^{2l})$ norm of $h$.  

The Zak transform has many other important properties, some of which we collect in the next proposition. Arguments for these facts are standard and presented in \cite{AGT} and \cite{Nitzan}, for instance.
\begin{proposition}\label{prop:ZakProperties}
	Let $b \in \ell_2^{d,l}$. 
	\begin{enumerate}
		\item[(i)] $Z_{d,l}$ is a unitary mapping from $\ell_2^{d,l}$ onto $\ell_2(S_N^{2l})$. 
		\item[(ii)] A sequence $b\in \ell_2^{d,l}$ generates an $A,B$-Gabor Riesz basis for $\ell_2^{d,l}$ if and only if $Z_{d,l}(b)$ satisfies 
		\begin{eqnarray*} A \ \le\  |Z_{d,l}(b)(\v{m}, \v{n}) |^2 \ \le\  B, \text{ for } (\v{m}, \v{n}) \in \Z_d^{2l}. \end{eqnarray*}
		\item[(iii)] Let $\widehat{b}\ =\ \mathcal{F}_{d,l}(b)$.  Then,
		\begin{eqnarray*} Z_{d,l}(\widehat{b})(\v{m},\v{n})\ =\  e^{2\pi i \frac{\v{m}\cdot \v{n}}{d}} Z_{d,l}(b)(-\v{n},\v{m}). \end{eqnarray*}
		\item[(iv)] For $a,b \in \ell_2^{d,l}$ define $(a \ast b)(\v{k})= \frac{1}{N^l} \sum_{j \in \Z_d^{l}} a(\v{k}-\v{j})b(\v{j})$.  Then, 
		\begin{eqnarray*} Z_{d,l}(a\ast b)(\v{m}, \v{n})\ =\  \frac{1}{N^l} \sum_{\v{j} \in \Z_d^l} b(\v{j}) Z_{d,l}(a)(\v{m}-\v{j},\v{n}) \ =\  (Z_{d,l}(a)\ast_1 b) (\v{m},\v{n}), \end{eqnarray*}
		where $\ast_1$ denotes convolution of $b$ with respect to the first set of variables of $Z_{d,l}(a)$, $\v{m}$, keeping the second set, $\v{n}$, fixed.
		
	\end{enumerate}
\end{proposition}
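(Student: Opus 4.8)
The plan is to verify all four statements by reducing to the elementary orthogonality identity for the sum $\sum_{\v{n}\in S_N^l}e^{2\pi i\v{n}\cdot\v{r}/N}$, which equals $N^l$ when every coordinate of $\v{r}$ is divisible by $N$ and vanishes otherwise, together with direct rearrangement of the defining sums; this is the route of \cite{AGT,Nitzan}, so I will only indicate the main points. For \textit{(i)}, I would expand $|Z_{d,l}(b)(\v{m},\v{n})|^2$ as a double sum over $\v{j},\v{j}'\in S_N^l$; summing over $\v{n}\in S_N^l$ collapses the exponential to $N^l$ times the indicator of $\v{j}=\v{j}'$, since the coordinates of $\v{j}-\v{j}'$ lie strictly between $-N$ and $N$. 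Summing what remains over $\v{m}\in S_N^l$ and using that $\{\v{m}-N\v{j}:\v{m},\v{j}\in S_N^l\}$ runs over a complete residue system modulo $d=N^2$ in each coordinate yields $\|Z_{d,l}(b)\|_{\ell_2(S_N^{2l})}^2=\|b\|_{\ell_2^{d,l}}^2$. Since $\dim\ell_2^{d,l}=d^l=N^{2l}=\dim\ell_2(S_N^{2l})$, an isometry between these spaces is automatically unitary. Statement \textit{(iv)} will be immediate: substituting the definition of $a\ast b$ into $Z_{d,l}(a\ast b)(\v{m},\v{n})$, interchanging the two finite sums, and recognizing the inner sum over $\v{j}\in S_N^l$ as $Z_{d,l}(a)(\v{m}-\v{j},\v{n})$ produces the stated formula and its interpretation as convolution in the first set of variables.

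For \textit{(iii)}, I would substitute $\widehat b(\v{k})=\frac{1}{N^l}\sum_{\v{p}\in\Z_d^l}b(\v{p})e^{-2\pi i\v{p}\cdot\v{k}/d}$ into the definition of $Z_{d,l}(\widehat b)(\v{m},\v{n})$, interchange the sums over $\v{p}\in\Z_d^l$ and $\v{j}\in S_N^l$, and split $\v{p}=\v{r}+N\v{s}$ with $\v{r},\v{s}\in S_N^l$. The sum over $\v{j}$ then produces, via the same orthogonality identity, a Kronecker delta tying $\v{s}$ to $\v{n}$ modulo $N$; collecting the leftover phase and relabeling rearranges the expression into $e^{2\pi i\v{m}\cdot\v{n}/d}\sum_{\v{r}\in S_N^l}b(-\v{n}-N\v{r})e^{2\pi i\v{m}\cdot\v{r}/N}=e^{2\pi i\v{m}\cdot\v{n}/d}\,Z_{d,l}(b)(-\v{n},\v{m})$, the discrete analogue of $Z\widehat g(s,t)=e^{2\pi ist}Zg(-t,s)$. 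The only delicate point here is careful bookkeeping of the phase factors, using the quasiperiodicity relations \eqref{eqn:quasiperiodic} to absorb reductions of indices modulo $d$ and modulo $N$.

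Statement \textit{(ii)} is the substantive one, and the plan is to diagonalize the Gabor system through the Zak transform. First I would compute, by a manipulation of the same type as above, that for $(\v{n},\v{m})\in(N\Z_d)^{2l}$ the element $b(\v{j}-\v{n})e^{2\pi i\v{j}\cdot\v{m}/d}$ of $G_{d,l}(b)$ has Zak transform of the form $(\text{unimodular constant})\cdot E_{\v{n},\v{m}}(\v{m}',\v{n}')\cdot Z_{d,l}(b)(\v{m}',\v{n}')$, where $E_{\v{n},\v{m}}$ is a discrete exponential in $(\v{m}',\v{n}')\in S_N^{2l}$. Next I would check that $\{E_{\v{n},\v{m}}\}_{(\v{n},\v{m})\in(N\Z_d)^{2l}}$ is an orthonormal basis of $\ell_2(S_N^{2l})$: there are exactly $N^{2l}=\dim\ell_2(S_N^{2l})$ of them, and their inner products again reduce to the orthogonality identity. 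Composing with the unitary $Z_{d,l}$ from \textit{(i)}, this exhibits $G_{d,l}(b)$ as the image of an orthonormal basis of $\ell_2^{d,l}$ under $Z_{d,l}^{-1}M_{Z_{d,l}(b)}Z_{d,l}$, where $M_{Z_{d,l}(b)}$ is multiplication by $Z_{d,l}(b)$ on $\ell_2(S_N^{2l})$. Since the image of an orthonormal basis under a bounded invertible operator is a Riesz basis with bounds $A,B$ exactly when the operator's smallest and largest singular values are $\sqrt A$ and $\sqrt B$, and the singular values of $M_{Z_{d,l}(b)}$ are the numbers $|Z_{d,l}(b)(\v{m}',\v{n}')|$ over $S_N^{2l}$, the Riesz basis condition with bounds $A,B$ is equivalent to $A\le|Z_{d,l}(b)(\v{m},\v{n})|^2\le B$ on $S_N^{2l}$, hence on all of $\Z_d^{2l}$ by quasiperiodicity. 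I expect the main obstacle to be precisely this last computation: pinning down the unimodular prefactor in the Zak transform of the Gabor elements and verifying that the functions $E_{\v{n},\v{m}}$ are genuinely orthonormal and complete; once that is in place, the functional-analytic conclusion is routine.
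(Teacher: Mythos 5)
The paper does not actually prove Proposition \ref{prop:ZakProperties}; it says the arguments are standard and cites \cite{AGT} and \cite{Nitzan}. Your sketch is the standard argument and is correct in all four parts. One small simplification for part (ii): the ``unimodular prefactor'' you expect to have to track turns out to be trivial. Writing $\v{n}=N\v{a}$, $\v{m}=N\v{b}$ with $\v{a},\v{b}\in S_N^l$, a direct shift of the summation index in the defining sum gives
\[
Z_{d,l}\bigl(b(\cdot-N\v{a})\,e^{2\pi i\,(\cdot)\cdot\v{b}/N}\bigr)(\v{m}',\v{n}')
= e^{2\pi i\,\v{m}'\cdot\v{b}/N}\,e^{-2\pi i\,\v{n}'\cdot\v{a}/N}\,Z_{d,l}(b)(\v{m}',\v{n}'),
\]
so the Zak transform carries the Gabor elements exactly onto $E_{\v{a},\v{b}}\cdot Z_{d,l}(b)$, where $E_{\v{a},\v{b}}$ are the $N^{2l}$ characters of $(\Z/N\Z)^{2l}$, which form an orthonormal basis of $\ell_2(S_N^{2l})$ under the stated normalization. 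The functional-analytic step is then as you say: $G_{d,l}(b)$ is unitarily equivalent to the image of this orthonormal basis under $M_{Z_{d,l}(b)}$, and the Riesz bounds $A,B$ are admissible precisely when the singular values of $M_{Z_{d,l}(b)}$, namely $|Z_{d,l}(b)(\v{m}',\v{n}')|$, lie in $[\sqrt{A},\sqrt{B}]$ (not exactly equal to the endpoints --- bounds need not be tight). This matches the standard treatment in the cited sources.
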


\begin{remark} We will be interested in the `smoothness' of $b$ and $Z_{d,l}(b)$ for $b \in \ell_2^{d,l}$.  Since these are functions on discrete sets, smoothness is not well defined, but we use the term in relation to the size of norms of certain difference operators defined on $\ell_2^{d,l}$ and $\ell_2(S_N^{2l})$, which mimic norms of partial derivatives of differentiable functions. \end{remark}

For $1\le k \le l$ and any $N$-quasiperiodic function on $\Z_d^l$, let $\Delta_k,\Gamma_k$ be defined as follows:
\begin{eqnarray*}
	\Delta_k W(\v{m},\v{n}) &=& W(\v{m}+\v{e_k},\v{n})-W(\v{m},\v{n}),\\
	\Gamma_k W(\v{m},\v{n}) &=& W(\v{m},\v{n}+\v{e_k})-W(\v{m},\v{n}).
\end{eqnarray*}
For $b \in \ell_2^{d,l}$ define $\alpha_k(b)$ and $\beta_k(b)$ by

\begin{eqnarray*}
	\alpha_k(b)&=& \|N\Delta_k b\|_{\ell_2^{d,l}}^2 + \|N \Delta_k \mathcal{F}_{d,l}(b)\|_{\ell_2^{d,l}}^2,\\
	\beta_k(b)&=& \frac{1}{N^{2l}} \sum_{(\v{m},\v{n})\in S_N^{2l}} |N\Delta_k Z_{d,l}(b)(\v{m},\v{n})|^2+\frac{1}{N^{2l}} \sum_{(\v{m},\v{n})\in S_N^{2l}} |N\Gamma_k Z_{d,l}(b)(\v{m},\v{n})|^2.
\end{eqnarray*}

The following proposition shows that $\alpha_k(b)$ and $\beta_k(b)$ are essentially equivalently sized.  Proposition 4.1 in \cite{Nitzan} proves this for the case $l=k=1$, and it is readily checked that the proof carries over directly to the $l>1$ setting.  
\begin{proposition}\label{prop:alphabeta}
	Let $B>0$ and let $b\in \ell_2^{d,l}$ be such that $|Z_{d,l}(b)(\v{m},\v{n})|^2 \le B$ for all $(\v{m},\v{n})\in \Z_d^{2l}$.  Then, for all integers $N\ge 2$ and any $1 \le k \le l$, we have 
	\begin{eqnarray*} \frac{1}{2} \beta_{k}(b) -8\pi^2 B\ \le\  \alpha_{k}(b) \ \le\  2 \beta_{k}(b) + 8 \pi^2 B. \end{eqnarray*}
\end{proposition}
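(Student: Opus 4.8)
The plan is to reduce the statement, via the unitarity of the Zak transform and Proposition \ref{prop:ZakProperties}(iii), to a direct comparison on the Zak side. Write $W = Z_{d,l}(b)$ and $\widehat{W} = Z_{d,l}(\widehat b)$. By Proposition \ref{prop:ZakProperties}(i), $Z_{d,l}$ is unitary from $\ell_2^{d,l}$ onto $\ell_2(S_N^{2l})$, and it intertwines the shift $b(\v j)\mapsto b(\v j + \v e_k)$ with the shift $W(\v m,\v n)\mapsto W(\v m + \v e_k, \v n)$; hence $\|N\Delta_k b\|_{\ell_2^{d,l}}^2 = \frac{1}{N^{2l}}\sum_{S_N^{2l}} |N\Delta_k W(\v m,\v n)|^2$, using that $Z_{d,l}(b)$ is $N$-quasiperiodic so the sum over a fundamental domain is well defined and shift-invariant up to unimodular factors. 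The point is that translating $b$ in the $k$-th coordinate corresponds to translating $W$ in the $k$-th of the first $l$ Zak variables, which is exactly the $\Delta_k$ appearing in $\beta_k$. Similarly, applying Proposition \ref{prop:ZakProperties}(iii), $\widehat W(\v m,\v n) = e^{2\pi i \v m\cdot\v n/d} W(-\v n,\v m)$, so a shift of $\widehat b$ in coordinate $k$ corresponds to a shift of $W$ in the $k$-th of the \emph{second} $l$ variables, i.e.\ the operator $\Gamma_k$, but now multiplied by the modulating factor $e^{2\pi i \v m\cdot\v n/d}$.

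The main work, then, is to estimate $\|N\Delta_k \widehat b\|_{\ell_2^{d,l}}^2$ against $\frac{1}{N^{2l}}\sum_{S_N^{2l}}|N\Gamma_k W|^2$. First I would expand, for the appropriate unit shift,
\begin{align*}
N\Delta_k \widehat b \ \longleftrightarrow\ e^{2\pi i (\v m\cdot\v n + m_k)/d} W(-\v n, \v m + \v e_k) - e^{2\pi i \v m\cdot\v n/d} W(-\v n,\v m),
\end{align*}
and then add and subtract the term $e^{2\pi i \v m\cdot\v n/d} W(-\v n, \v m + \v e_k)$ to split this into $e^{2\pi i \v m\cdot\v n/d}\,N\Gamma_k W(-\v n,\v m)$ plus a remainder of the form $\big(e^{2\pi i m_k/d}-1\big)\,e^{2\pi i \v m\cdot\v n/d}\, W(-\v n,\v m + \v e_k)$. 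Using $|e^{2\pi i m_k/d}-1| \le 2\pi |m_k|/d \le 2\pi N/d = 2\pi/N$ for $m_k$ ranging over a fundamental domain, together with $|W|^2 \le B$, the remainder contributes at most $(2\pi/N)^2 N^2 B = 4\pi^2 B$ in the normalized $\ell_2$ sense. Then the elementary inequality $\|X+Y\|^2 \le 2\|X\|^2 + 2\|Y\|^2$ (and its reverse $\|X\|^2 \le 2\|X+Y\|^2 + 2\|Y\|^2$) gives the factor-of-$2$ losses and the additive $8\pi^2 B$ after combining the two summands in $\alpha_k$ (each contributing a $4\pi^2 B$ error term). The $\Delta_k b$ term needs no such correction since it matches $\Delta_k W$ exactly.

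The hard part is bookkeeping: making sure the sums over $S_N^{2l}$ are genuinely interchangeable with sums over other fundamental domains (e.g.\ that replacing $(\v m,\v n)$ by $(-\v n,\v m)$ and re-indexing does not change the normalized sum), and checking that the modulation factor $e^{2\pi i\v m\cdot\v n/d}$ interacts correctly with the quasiperiodicity \eqref{eqn:quasiperiodic} so that the error bound $|e^{2\pi i m_k/d}-1|\le 2\pi/N$ holds uniformly — one must pick the representative $m_k \in \{0,\dots,N-1\}$ (or a centered interval of length $N$) consistently. Once these domain-change and representative issues are handled, the two desired inequalities follow by collecting the error terms. As noted in the text, this is exactly the argument of Proposition 4.1 in \cite{Nitzan} for $l=k=1$, and nothing in the computation uses $l=1$ or $k=1$ beyond the fact that the single coordinate $k$ is singled out on both the $b$-side and the $W$-side; so the proof carries over verbatim with $\v e_k \in \R^l$ in place of the scalar unit shift.
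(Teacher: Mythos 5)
Your proposal follows the same route the paper points to, namely Proposition~4.1 of Nitzan--Olsen, transported to several variables: identify $\|N\Delta_k b\|^2$ with the $\Delta_k Z_{d,l}(b)$ term (exactly), and compare $\|N\Delta_k\widehat b\|^2$ to the $\Gamma_k Z_{d,l}(b)$ term via Proposition~\ref{prop:ZakProperties}(iii), with the modulating phase producing the additive error. The skeleton is correct, but two of the details you wave at are worth pinning down, one because it is a small error, the other because the ``bookkeeping'' is not a routine verification in the direction you set it up.

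First, the modulating factor. Since $(\v m+\v e_k)\cdot\v n - \v m\cdot\v n = n_k$, the extra factor produced when you shift $\widehat b$ in the $k$-th coordinate is $e^{2\pi i n_k/d}$, not $e^{2\pi i m_k/d}$. This does not change the magnitude estimate, because both $m_k$ and $n_k$ range over a length-$N$ interval, but the variable you must control (and therefore the fundamental domain you must choose carefully) is $n_k$, not $m_k$.

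Second, the domain change is genuinely subtle and your proposed justification would not go through as stated. You want $\tfrac{1}{N^{2l}}\sum_{(\v m,\v n)\in S_N^{2l}}|N\Gamma_k W(-\v n,\v m)|^2$ to equal the $\Gamma_k$ term of $\beta_k(b)$, and you suggest this follows because $|\Gamma_k W|$ is periodic so the sum over any fundamental domain is the same. That is false: $\Gamma_k W(\v m + N\v e_k,\v n) = e^{2\pi i n_k/N}\bigl[e^{2\pi i/N}W(\v m,\v n+\v e_k)-W(\v m,\v n)\bigr]$, so $|\Gamma_k W|$ is \emph{not} $N$-periodic in $m_k$; taking $W\equiv 1$ on two adjacent points already gives $0$ on one domain and $|e^{2\pi i/N}-1|$ on the shifted one. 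Swapping fundamental domains for $\Gamma_k W$ therefore costs another error of the same size $2\pi\sqrt{B}$, which degrades the constant. The clean fix is to use the periodicity of the object that actually is $N$-periodic: since $\Delta_k Z_{d,l}(\widehat b)$ is again a Zak transform (of $\Delta_k\widehat b$), $|\Delta_k Z_{d,l}(\widehat b)|$ is $N$-periodic in $\v n$, so you may compute $\|N\Delta_k\widehat b\|^2$ by summing over $(\v m,\v n)\in S_N^l\times\{-(N-1),\dots,0\}^l$ rather than over $S_N^{2l}$. With that choice, $-\v n$ lies in $S_N^l$, so $(-\v n,\v m)$ lands directly in $S_N^{2l}$ and no further re-indexing of $\Gamma_k W$ is needed; moreover $|n_k|\le N-1$ still gives $|e^{2\pi i n_k/d}-1|<2\pi/N$. (Equivalently, one can run the comparison in the reverse direction, expressing $\Gamma_k W(\v a,\v b)$ on $S_N^{2l}$ in terms of $\Delta_k Z_{d,l}(\widehat b)(\v b,-\v a)$.) Finally, a minor accounting remark: only the Fourier-side summand contributes to the $8\pi^2 B$, the first summand being an exact match, so the error is $2\cdot(2\pi\sqrt B)^2 = 8\pi^2 B$ from a single application of $\|X+Y\|^2\le 2\|X\|^2+2\|Y\|^2$, not $4\pi^2 B$ ``from each term.''
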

We thus see that in order to bound $\alpha_k(b)$ as in Theorem \ref{thm:FiniteBLTHD}, it is sufficient to bound $\beta_k(b)$.  For $b \in \ell_2^d= \ell_2^{d,1}$, let $\beta(b)= \beta_{1}(b)$, and let 
\begin{eqnarray*}
	\beta_{A,B}(N)=\inf\{ \beta(b)\},
\end{eqnarray*}
where the infimum is taken over all $b \in \ell_2^d$ such that $b$ generates an $A,B$-Gabor Riesz basis. 
\begin{theorem}[Theorem 4.2, \cite{Nitzan}]\label{thm:betabound1d}
	There exist constants $0<c_{AB}\le C_{AB}<\infty$ such that for all $N\ge 2$, we have 
	\[ c_{AB}\log(N) \ \le\  \beta_{A,B}(N)\ \le\  C_{AB}\log(N).\]
\end{theorem}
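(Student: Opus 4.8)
The plan is to translate the quantity $\beta(b)$ into a statement about the Zak transform and then to recognize the desired estimate as a discrete, two-dimensional vortex-energy bound. Writing $W = Z_{d,1}(b)$, Proposition \ref{prop:ZakProperties}(i) gives that $b \mapsto W$ is a unitary bijection of $\ell_2^d$ onto $\ell_2(S_N^2)$, so by part (ii) the generators $b$ of $A,B$-Gabor Riesz bases correspond exactly to the $N$-quasiperiodic $W$ on $\Z_d^2$ with $\sqrt{A} \le |W(m,n)| \le \sqrt{B}$ for all $(m,n)$; moreover the factors $N^{-2}$ and $N^2$ in the definition of $\beta_1$ cancel, so $\beta(b) = \sum_{(m,n) \in S_N^2}\bigl(|\Delta_1 W(m,n)|^2 + |\Gamma_1 W(m,n)|^2\bigr)$ with the boundary conventions $W(N,n) = e^{2\pi i n/N}W(0,n)$ and $W(m,N) = W(m,0)$. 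Thus $\beta_{A,B}(N)$ is the minimal discrete Dirichlet energy, on the $N \times N$ grid, of a section of the line bundle whose monodromy is the $N$-quasiperiodicity, subject to the two-sided modulus bound. That bundle has degree one --- it is precisely the obstruction that forces a continuous quasiperiodic function to vanish somewhere --- so the discrete problem resolves that impossibility at scale $1/N$, and the answer should be of order $\log N$.

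For the upper bound it suffices to exhibit one admissible $W$ with energy at most $C_{AB}\log N$. I would take $W$ of constant modulus $\sqrt{A}$ whose phase realizes the degree-one monodromy by a single unit vortex placed at a lattice point near the center of the grid; concretely this is a discretized, truncated theta function, the Zak transform of a Gaussian furnishing the continuous model. Away from the vortex one has $|\Delta_1 W(m,n)|^2 + |\Gamma_1 W(m,n)|^2 \lesssim A/r^2$ at lattice distance $r$ from the vortex, while the slowly varying part of the phase that carries the quasiperiodic twist changes by $O(1/N)$ per step and contributes only $O(B)$ in total. Summing, $\beta(b) \lesssim A\sum_{r=1}^{N}r^{-2}\cdot r + O(B) \asymp A\log N$, so $\beta_{A,B}(N) \le C_{AB}\log N$.

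For the lower bound, let $W$ be admissible and put $u = W/|W|$, which takes values in the unit circle and satisfies $|\Delta_1 u| \le \tfrac{2}{\sqrt{A}}|\Delta_1 W|$ and likewise for $\Gamma_1$, so $\beta(b) \ge \tfrac{A}{4}E(u)$ where $E(u) = \sum_{S_N^2}(|\Delta_1 u|^2 + |\Gamma_1 u|^2)$; it remains to show $E(u) \ge c\log N$ for a universal $c > 0$. Identifying $E(u)$ with a sum over all grid edges $e$ of the squared $u$-increments, assign to each edge the principal argument $\varphi_e \in (-\pi,\pi]$ of the ratio of its two endpoint $u$-values; then $|\varphi_e| \le \tfrac{\pi}{2}|\Delta_e u|$, hence $E(u) \ge \tfrac{4}{\pi^2}\sum_e \varphi_e^2$. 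A telescoping computation over the $N \times N$ array of unit faces shows that the signed plaquette sums $d_f := \tfrac{1}{2\pi}\sum_{e \in \partial f}\pm\varphi_e$ satisfy $\sum_f d_f = 1$: the interior edges cancel in pairs, and the twisted seam at $m = N$ contributes $N \cdot \tfrac{2\pi}{N} = 2\pi$. In particular $u$ must carry a vortex, $d_f \ne 0$ for some $f$. I would then run the standard concentric-loop (vortex-ball) estimate: after translating on the twisted torus so that a vortex sits near the center, for each scale $r = 1, \dots, \asymp N$ the energy of $u$ on the square loop $L_r$ of side $2r$ is, by Cauchy-Schwarz, at least a constant times $\bigl(\sum_{e \in L_r}\pm\varphi_e\bigr)^2/|L_r| = (2\pi)^2(\text{degree enclosed by }L_r)^2/|L_r| \gtrsim 1/r$ whenever the enclosed degree is nonzero; summing over the pairwise disjoint loops $L_1, \dots, L_{\asymp N}$ yields $E(u) \gtrsim \sum_r 1/r \asymp \log N$. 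Together with the upper bound this proves the theorem.

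The step I expect to be the main obstacle is the last one. Two issues must be dealt with: making the discrete winding numbers well defined when consecutive values of $u$ are near-antipodal (such edges each carry $\Omega(1)$ energy, so they are few, but the loops must be routed around them), and --- more seriously --- ruling out the possibility that the degree enclosed by $L_r$ vanishes for most intermediate scales because of cancelling vortices of opposite sign. The honest way to handle the latter is the vortex-ball construction: grow pairwise disjoint balls around the vortex faces, merge them on contact, and sum the annular energy shell by shell, using that the enclosed degrees $D_j$ satisfy $\sum_j D_j = 1$ together with convexity to keep a definite lower bound at every scale. This bookkeeping is the real content of the lower bound; the reformulation in the first paragraph and the construction in the second are routine.
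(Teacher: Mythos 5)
Your reformulation of $\beta(b)$ as a twisted discrete Dirichlet energy, the reduction to the unimodular $u=W/|W|$ with $\beta(b)\ge \tfrac{A}{4}E(u)$, and the computation that the plaquette degrees sum to one are all correct. The upper-bound sketch is also fine in outline and is essentially the same idea the paper attributes to \cite{Nitzan}: build a unimodular $W$ on $S_N^2$ carrying the degree-one monodromy via a single vortex and sum the energy directly (Nitzan--Olsen discretize the explicit BCGP phase rather than a theta function). But your lower bound is a plan, not a proof. You name the decisive step yourself --- ``this bookkeeping is the real content of the lower bound'' --- and then leave it undone. Concretely: (1) $\varphi_e$ is well-defined only modulo $2\pi$, and edges with near-antipodal endpoints break the local exactness on which the plaquette count relies, so the winding argument must be run on a surgically modified grid; and (2) the concentric-loop bound collapses if cancelling vortex pairs cluster at the would-be center, since then $L_r$ encloses degree zero at every small scale and those shells contribute nothing. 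The vortex-ball machinery of Jerrard and Sandier does address both points, but adapting it to a discrete, twisted, $N\times N$ grid with $|W|$ only pinched between $\sqrt{A}$ and $\sqrt{B}$ is precisely the work the theorem requires. As written, you have reduced the theorem to another theorem of comparable depth that you assert rather than prove.

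It is also worth noting that this is not the route the paper (following \cite{Nitzan}) takes, so you cannot lean on their proof for the missing step. Their lower bound never passes through a vortex-ball estimate. Instead they localize the quasiperiodicity obstruction to a family of lattice-type substructures inside $\{0,\dots,N\}^2$: on each such structure the argument of $Z_{d}(b)$ is forced to jump by a fixed constant between some pair of neighboring points (their Lemmas 3.1 and 3.4), the bound $|Z_{d}(b)|\ge\sqrt{A}$ converts that argument jump into a value jump of fixed size (their Corollary 3.6), and they count on the order of $\log N$ pairwise disjoint such structures. Both arguments are topological at heart, but the jump-counting mechanism is applied locally, structure by structure, and so never confronts the global vortex-cancellation problem that your concentric-loop argument must solve.
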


To prove the lower bound in this theorem (as is done in \cite{Nitzan}), one may examine the argument of the Zak transform of a sequence $b \in \ell_2^d$ which generates a basis with Riesz basis bounds $A$ and $B$ over finite dimensional lattice-type structures in the square $\{0,...,N\}^2$.  Due to the $N$-quasiperiodicity conditions satisfied by $Z_{d,l}(b)$ this argument is forced to `jump' at some step between neighboring points along these lattice-type sets (See Lemma 3.1 and 3.4 in \cite{Nitzan}). Due to the Riesz basis assumption and part (iv) of Proposition \ref{prop:ZakProperties}, jumps in the argument of $Z_{d,l}(b)$ correspond directly to jumps in $Z_{d,l}(b)$  (see Corollary 3.6 in \cite{Nitzan}). By counting the number of lattice-type sets which are disjoint, a logarithmic lower bound is given for the number of jumps in $Z_{d,l}(b)$ corresponding to jumps in the argument, which gives the lower bound in Theorem \ref{thm:betabound1d}.  

The proof of the upper bound involves an explicit construction of the argument of a unimodular function, $W$, on $S_N^2$.  Since the Zak transform is a unitary, invertible mapping between $\ell_2^{d}$ and $\ell_2(S_N^2)$, there is a corresponding $\tilde{b}\in \ell_2^{d}$ so that $G(\tilde{b})$ is an orthonormal basis (which can be scaled to form a Riesz basis with bounds $A$ and $B$ for any $A$ and $B$) and such that $\tilde{b}$ satisfies $Z_{d}(\tilde{b})=W$.  For this construction, $\beta(\tilde{b})$ can be bounded directly to show the upper bound in the theorem.

\section{Proof of Theorem \ref{thm:FiniteBLTHD}}\label{sec:proofFiniteBLTHD}

Based on Proposition \ref{prop:alphabeta}, to prove Theorem \ref{thm:FiniteBLTHD} it is sufficient to show that Theorem \ref{thm:betabound1d} extends from $\ell_2^d$ to $\ell_2^{d,l}$.  
We show this below, and in particular that by restricting the Zak transform of a multi-variable sequence to the $k^{\text{th}}$ variable in each component, we can directly use Theorem \ref{thm:betabound1d} to prove the multi-variable version of the lower bound. Similarly, we show that by taking suitable products of the constructed $\tilde{b}$ function mentioned above, we can also extend the logarithmic upper bound to higher dimensions.

Let 
\begin{eqnarray*}
	\beta_{A,B,k}(N,l)= \inf\{ \beta_k(b)\},
\end{eqnarray*}
where the infimum is over all $b \in \ell_2^{d,l}$ which generate an $A,B$-Gabor Riesz basis for $\ell_2^{d,l}$.  
\begin{theorem}\label{thm:betaboundhd}
	For the same constants $0<c_{AB}\le C_{AB}<\infty$ as Theorem \ref{thm:betabound1d}, for all $N\ge 2$, and for any $1\le k \le l$, we have
	\[ c_{AB}\log(N) \ \le\  \beta_{A,B,k}(N,l)\ \le\   C_{AB}\log(N).\]
\end{theorem}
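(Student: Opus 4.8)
The plan is to reduce both the lower and the upper bound to the one-dimensional case, Theorem \ref{thm:betabound1d}, using the tensor-like structure of the Zak transform in several variables. For the \textbf{lower bound}, suppose $b \in \ell_2^{d,l}$ generates an $A,B$-Gabor Riesz basis, and fix $1 \le k \le l$. By Proposition \ref{prop:ZakProperties}(ii) we have $A \le |Z_{d,l}(b)(\v{m},\v{n})|^2 \le B$ for all $(\v{m},\v{n}) \in \Z_d^{2l}$. The idea is to freeze all the coordinates of $\v m$ and $\v n$ other than the $k$-th. That is, for each fixed choice of $(m_i)_{i\ne k}$ and $(n_i)_{i\ne k}$, the function $(s,t) \mapsto Z_{d,l}(b)(\v{m},\v{n})$, where $m_k = s$ and $n_k = t$ and the remaining coordinates take their frozen values, is an $N$-quasiperiodic function on $\Z_d^2$ whose modulus-squared lies in $[A,B]$. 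One then checks that this two-variable slice is itself the discrete Zak transform (in dimension one) of some $b_{\text{slice}} \in \ell_2^d$ generating an $A,B$-Gabor Riesz basis for $\ell_2^d$ — this uses the surjectivity and norm-preserving properties of $Z_{d,1}$ from Proposition \ref{prop:ZakProperties}(i) together with the characterization in part (ii). Applying Theorem \ref{thm:betabound1d} to each slice gives
\[
\frac{1}{N^{2}} \sum_{(s,t) \in S_N^2} \left( |N\Delta_k Z_{d,l}(b)|^2 + |N\Gamma_k Z_{d,l}(b)|^2 \right) \ \ge\  c_{AB}\log(N)
\]
for each frozen slice; averaging over the $N^{2(l-1)}$ choices of the remaining coordinates and using the definition of $\beta_k(b)$ yields $\beta_k(b) \ge c_{AB}\log(N)$, hence the lower bound on $\beta_{A,B,k}(N,l)$.

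For the \textbf{upper bound}, the plan is to build $b \in \ell_2^{d,l}$ as a product. Let $\tilde b \in \ell_2^d$ be the one-dimensional generator from the discussion after Theorem \ref{thm:betabound1d} (scaled so that $G_d(\tilde b)$ has Riesz bounds $A,B$), satisfying $\beta(\tilde b) \le C_{AB}\log(N)$, and let $\tilde b_0 \in \ell_2^d$ similarly generate an $A,B$-Gabor Riesz basis with, say, $\beta(\tilde b_0)$ bounded (for instance a suitably scaled unimodular-Zak-transform generator with controlled differences). Define $b(\v j) = \tilde b(j_k) \prod_{i \ne k} \tilde b_0(j_i)$. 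Because the Zak transform factorizes over tensor products, $Z_{d,l}(b)(\v m, \v n) = Z_{d,1}(\tilde b)(m_k, n_k) \prod_{i\ne k} Z_{d,1}(\tilde b_0)(m_i, n_i)$, so the modulus-squared of $Z_{d,l}(b)$ is the product of quantities each in $[A,B]$; after renormalizing the overall constant (absorbing a factor depending only on $A,B,l$) we get a generator of an $A,B$-Gabor Riesz basis. The difference $\Delta_k Z_{d,l}(b)$ only hits the $k$-th factor, so $|N\Delta_k Z_{d,l}(b)|^2 = |N\Delta Z_{d,1}(\tilde b)(m_k,n_k)|^2 \prod_{i\ne k}|Z_{d,1}(\tilde b_0)(m_i,n_i)|^2 \le B^{l-1}|N\Delta Z_{d,1}(\tilde b)|^2$, and similarly for $\Gamma_k$; summing gives $\beta_k(b) \le B^{l-1}\beta(\tilde b) \le C_{AB}B^{l-1}\log(N)$, which has the desired form after adjusting the constant — and one should double-check whether the original one-dimensional $C_{AB}$ genuinely survives or whether a dimension-dependent constant is needed (the statement claims the same $C_{AB}$, so the product generator must be arranged to avoid the $B^{l-1}$ blow-up, e.g. by choosing the auxiliary factors to have unimodular Zak transform so that $|Z_{d,1}(\tilde b_0)| \equiv 1$ and only scaling the $k$-th factor).

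The \textbf{main obstacle} I anticipate is the bookkeeping in the upper bound: one must produce the product generator so that (a) its Zak transform has modulus-squared exactly in $[A,B]$ (not $[A^l, B^l]$ or the like) and (b) the cross-factors contribute a multiplicative constant of exactly $1$ — not merely $O_l(1)$ — to $\beta_k$, so that the \emph{same} constant $C_{AB}$ as in Theorem \ref{thm:betabound1d} is recovered. Choosing the auxiliary coordinates' generator to have a unimodular Zak transform resolves both points: then $|Z_{d,l}(b)|^2 = |Z_{d,1}(\tilde b)(m_k,n_k)|^2 \in [A,B]$ directly, and the product in the bound for $|N\Delta_k Z_{d,l}(b)|^2$ is identically $1$. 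The remaining work — verifying that a slice of a multivariable quasiperiodic function with modulus in $[A,B]$ is genuinely realized as a one-dimensional Zak transform of an $A,B$-Gabor Riesz basis generator — is routine given Proposition \ref{prop:ZakProperties}, since $Z_{d,1}$ is a unitary bijection onto the space of $N$-quasiperiodic sequences on $S_N^2$ and part (ii) translates the Riesz bound condition pointwise.
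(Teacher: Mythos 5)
Your proposal is correct and takes essentially the same approach as the paper: for the lower bound, freeze all but the $k$-th pair of coordinates, realize the slice (via quasiperiodicity and unitarity of $Z_{d,1}$) as a one-dimensional Zak transform of an $A,B$-Riesz-basis generator, apply Theorem~\ref{thm:betabound1d}, and average; for the upper bound, take a tensor product of one-dimensional generators. The self-correction in your final paragraph — forcing the auxiliary factors to have unimodular Zak transform so that $|Z_{d,l}(b)|^2$ stays in $[A,B]$ and the cross-factor in $|N\Delta_k Z_{d,l}(b)|^2$ is identically $1$ — is exactly how the paper avoids a dimension-dependent constant (the paper builds the product out of orthonormal-basis generators and then scales once at the end to hit the $A,B$ bounds), so your first attempt's ``absorb a factor depending on $A,B,l$'' would not recover the same $C_{AB}$, but your corrected version does.
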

\begin{proof} For notational convenience, we show both the lower and upper bound with $k=1$, but a similar argument applies for any $1\le k\le l$.
	
	\textbf{Lower bound:}
	Let $b \in \ell_2^{d,l}$ generate an $A,B$-Gabor Riesz basis for $\ell_2^{d,l}$.
	
	Let $\v{m}= (m_1, \v{m}')$ and $\v{n}=(n_1,\v{n}')$ for fixed $(\v{m}', \v{n}') \in S_N^{2(l-1)}$ and define 
	\[ T(m_1,n_1)\ =\ T_{\v{m}',\v{n}'}(m_1, n_1)\ =\  Z_{d,l}(b)((m_1, \v{m}'),(n_1,\v{n}')).\]
	Then, $T$ satisfies 
	\begin{eqnarray*}
		T(m_1+N,n_1)&=& Z_{d,l}(b)(\v{m}+N\v{e}_1, \v{n})\ =\  e^{2\pi i \frac{n_1}{N}} T(m_1,n_1),\\
		T(m_1,n_1+N)&=& Z_{d,l}(b)(\v{m}, \v{n}+ N\v{e}_1)\  =\  T(m_1, n_1),
	\end{eqnarray*}
	so $T$ is $N$-quasiperiodic on $\Z_d^{2}$ (see equation \ref{eqn:quasiperiodic}).  By the unitary property of the Zak transform (Proposition \ref{prop:ZakProperties}), there exists a $b_1\in \ell_2^d$ so that $T=Z_{d,1}(b_1)$, and since $A \le |T(m_1,n_1)|^2 \le B$ for any $(m_1, n_1)\in \Z_d^2$, the same property shows that $G_d(b_1)$ is a Riesz basis for $\ell_2^d$ with bounds $A$ and $B$.  Thus, Theorem \ref{thm:betabound1d} shows that 

\begin{align*} C_{AB} \log(N) \le\ \sum_{(m_1,n_1)\in S_N^{2}} |\Delta_1 T_{\v{m}',\v{n}'}(m_1,n_1)|^2 + \sum_{(m_1,n_1)\in S_N^{2}} |\Gamma_1 T_{\v{m}',\v{n}'}(m_1,n_1)|^2 .
	 \end{align*}

	Since the choice of $(\v{m}', \v{n}') \in S_N^{2(l-1)}$ was arbitrary, this bound holds for any such choice.
	
	Thus, computing $\beta_1(b)$, we find

	\begin{gather*}
 \frac{1}{N^{2(l-1)}}  \sum\limits_{(\v{m}', \v{n}') \in S_N^{2(l-1)}} [ \sum_{(m_1,n_1)\in S_N^{2}} |\Delta T_{\v{m}',\v{n}'}(m_1,n_1)|^2 + \sum\limits_{(m_1,n_1)\in S_N^{2}} |\Gamma T_{\v{m}',\v{n}'}(m_1,n_1)|^2 ] \\
		\ge C_{AB} \log(N),
	\end{gather*}
	since the bound holds for each term inside the brackets, and $\beta_1(b)$ is simply an average of these terms.  Taking an infimum over all acceptable $b \in \ell_2^{d,l}$ proves the lower bound.

	\textbf{Upper bound:} To prove the upper bound, we adapt the construction used to prove the one-dimensional upper bound in \cite{Nitzan} to higher dimensions.  The sequence used in this construction builds on a continuous construction first given in \cite{BCGP}. Note that it suffices to prove the result for orthonormal bases, as the result for Riesz bases follows by scaling the constructed generator by the Riesz basis bounds.  
	
	In Section 4.3 of \cite{Nitzan}, it is shown that there is a constant $C>0$ such that for any $N\ge 2$, there exists a $b \in \ell_2^d$ such that $G_d(b)$ is an orthonormal basis for $\ell_2^d$ and 
	\[ \beta(b)\ =\  \sum_{(m,n)\in S_N^2} \left|\Delta Z_{d,1}(b)(m,n) \right|^2+\sum_{(m,n)\in S_N^2} \left|\Gamma Z_{d,1}(b)(m,n) \right|^2 \le C \log (N).\]
	
	For $\v{j} \in \Z_d^l$, let $b_l(\v{j})= b(j_1) b(j_2)\cdots b(j_l)$.  Then,
	\[ Z_{d,l}(b_l)(\v{m},\v{n})\ =\  Z_{d,1}(b)(m_1, n_1) \cdots Z_{d,1}(b)(m_l,n_l).\]
	Since $G_d(b)$ is an orthonormal basis for $\ell_2^d$, $Z_{d,l}(b_{l})$ is unimodular, and therefore, $G_{d,l}(b_l)$ is an orthonormal basis for $\ell_2^{d,l}$ by Proposition \ref{prop:ZakProperties}. We have, $\beta_{1}(b_l)$ is equal to
	
	\begin{gather*} \frac{1}{N^{2(l-1)}} \sum_{(\v{m}',\v{n}')\in \Z_N^{2(l-1)}} \left[\sum_{(m_1,n_1)\in S_N^2}  \left|\Delta Z_{d,1}(b)(m_1,n_1) \right| ^2 + \sum_{(m_1,n_1)\in S_N^2} \left|\Gamma Z_{d,1}(b)(m_1,n_1) \right| ^2\right] \\
 \le C\log(N). \end{gather*}
\end{proof}

Theorem \ref{thm:FiniteBLTHD} follows by combining Theorem \ref{thm:betaboundhd} with Proposition \ref{prop:alphabeta}.

%

\section{Proof of Theorem \ref{thm:FiniteQuantBLTHD}} \label{sec:proofFiniteQuantBLTHD}

In establishing a Finite Quantitative BLT for several variables, we follow a similar argument used to prove the one variable version (from \cite{Nitzan}), but there are some necessary updates to certain parts of the proof.  We include the details here for completeness.  

We start with a straightforward bound on the `smoothness' of $Z_{d,l}(b\ast \phi)$. This observation is analogous to Lemma 2.6 of \cite{Nitzan}. Let $\|\phi\|_{\ell_1^{d,l}} = \frac{1}{N^l} \sum_{\v{j}\in \Z_d^l} |\phi(\v{j})|$, and for $a, b \in \ell_2^{d,l}$, recall that $(a \ast b)(\v{k})= \frac{1}{N^l} \sum_{\v{j} \in \Z_d^l} a(\v{k}-\v{j})b(\v{j})$.   
\begin{lemma}\label{lem:convboundzd}
	Suppose $b, \phi \in \ell_2^{d,l}$ are such that $|Z_{d,l}(b)|^2 \le B$ everywhere.  Then, for any integer $t$, 
	\begin{eqnarray*}
		|Z_{d,l}(b\ast \phi) (\v{m}+t\v{e}_k,\v{n})-Z_{d,l}(b\ast \phi) (\v{m}, \v{n})| \le \frac{\sqrt{B} |t|}{N} \|N \Delta_k \phi\|_{\ell_1^{d,l}}.
	\end{eqnarray*}
\end{lemma}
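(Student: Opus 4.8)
The plan is to reduce everything to the convolution identity for the Zak transform, Proposition \ref{prop:ZakProperties}(iv), which lets us move the difference operator off of the (possibly unbounded) factor $b$ and onto $\phi$. Since convolution is commutative, that identity gives
\[ Z_{d,l}(b\ast \phi)(\v{m},\v{n}) \ =\ \frac{1}{N^l}\sum_{\v{j}\in \Z_d^l} \phi(\v{j})\, Z_{d,l}(b)(\v{m}-\v{j},\v{n}). \]
First I would form the difference $Z_{d,l}(b\ast\phi)(\v{m}+t\v{e}_k,\v{n})-Z_{d,l}(b\ast\phi)(\v{m},\v{n})$ and, in the first of the two sums, reindex $\v{j}\mapsto \v{j}+t\v{e}_k$. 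This is legitimate because both $\phi$ and $Z_{d,l}(b)(\cdot,\v{n})$ are $d$-periodic in each coordinate and the sum runs over all of $\Z_d^l$. The difference then becomes
\[ \frac{1}{N^l}\sum_{\v{j}\in \Z_d^l}\bigl(\phi(\v{j}+t\v{e}_k)-\phi(\v{j})\bigr)\, Z_{d,l}(b)(\v{m}-\v{j},\v{n}), \]
so that all of the $(\v{m},\v{n})$-dependence now sits inside the bounded factor $Z_{d,l}(b)$.

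Next I would telescope $\phi(\v{j}+t\v{e}_k)-\phi(\v{j})=\sum_{s=0}^{t-1}\Delta_k\phi(\v{j}+s\v{e}_k)$ when $t\ge 0$ (and the analogous identity with $|t|$ summands and an overall minus sign when $t<0$), apply the triangle inequality, and use the hypothesis $|Z_{d,l}(b)(\v{m}-\v{j},\v{n})|\le \sqrt{B}$. Interchanging the order of the two finite sums and reindexing $\v{j}\mapsto \v{j}-s\v{e}_k$ in each inner sum collapses the dependence on $s$, yielding
\[ \bigl| Z_{d,l}(b\ast\phi)(\v{m}+t\v{e}_k,\v{n})-Z_{d,l}(b\ast\phi)(\v{m},\v{n})\bigr| \ \le\ \frac{\sqrt{B}}{N^l}\,|t|\sum_{\v{j}\in \Z_d^l}|\Delta_k\phi(\v{j})| \ =\ \sqrt{B}\,|t|\,\|\Delta_k\phi\|_{\ell_1^{d,l}} \ =\ \frac{\sqrt{B}\,|t|}{N}\,\|N\Delta_k\phi\|_{\ell_1^{d,l}}, \]
which is exactly the stated bound.

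There is no serious obstacle here; the only points needing care are bookkeeping ones. I would be careful to justify the two reindexings using the $d$-periodicity of $Z_{d,l}(b)$ in each of its $2l$ arguments, and to keep track of the sign and the correct number of summands in the telescoping identity when $t$ is negative. (If $|t|\ge d$ one could sharpen the estimate by replacing $|t|$ with $|t|\bmod d$, but the crude telescoping with $|t|$ terms already produces the asserted inequality.)
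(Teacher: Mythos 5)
Your proof is correct and uses essentially the same argument as the paper: the convolution identity $Z_{d,l}(b\ast\phi)=Z_{d,l}(b)\ast_1\phi$ from Proposition \ref{prop:ZakProperties}(iv), a telescoping over $|t|$ unit steps, and the pointwise bound $|Z_{d,l}(b)|\le\sqrt{B}$. The only difference is a cosmetic reordering—you reindex to push the $t$-shift onto $\phi$ and then telescope, while the paper telescopes the $Z_{d,l}(b\ast\phi)$ difference first and applies the convolution identity in each unit step—but the computation and the ingredients are identical.
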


\begin{proof}
	From Proposition \ref{prop:ZakProperties}, we have 
	\begin{eqnarray*}
		Z_{d,l}(b\ast \phi)(\v{m}, \v{n}) = \frac{1}{N^l} \sum_{\v{j} \in \Z_d^l} \phi(\v{j}) Z_{d,l}(b)(\v{m}-\v{j}, \v{n})= Z_{d,l}(b) \ast_1 \phi(\v{m},\v{n}).
	\end{eqnarray*}
	Therefore, we have 
	\begin{eqnarray*}
		& & |Z_{d,l}(b\ast \phi) (\v{m}+t\v{e}_k,\v{n})-Z_{d,l}(b\ast \phi) (\v{m}, \v{n})|\\
		&\le& \sum_{s=0}^{t-1} |Z_{d,l}(b\ast \phi) (\v{m}+(s+1)\v{e}_k,\v{n})-Z_{d,l}(b\ast \phi) (\v{m}+s\v{e}_k, \v{n})|\\
		&=& \sum_{s=0}^{t-1} \left| \frac{1}{N^l} \sum_{\v{j}\in \Z_d^l} Z_{d,l}(b)(\v{j}, \v{n}) [ \phi (\v{m}+(s+1)\v{e}_k -\v{j}) -\phi (\v{m}+s\v{e}_k -\v{j})] \right|\\
		&\le& \sum_{s=0}^{t-1} \frac{\sqrt{B}}{N^l} \sum_{\v{j}\in \Z_d^l} |\Delta_k \phi (\v{m}+s\v{e}_k - \v{j})|\ =\  \frac{\sqrt{B}}{N} t \|N\Delta_k \phi\|_{\ell_1^{d,l}}.
	\end{eqnarray*} 
\end{proof}

Next we extend the following Lemma 5.2 of \cite{Nitzan} to higher dimensions. The adjustments to this lemma for the higher dimensional setting are minimal, however we state the one-dimensional and multi-variable versions separately for comparison.

\begin{lemma}[Lemma 5.2, \cite{Nitzan}]\label{conv-lemma}
Let $A, B>0$ and $N\geq    200 \sqrt{B/A}$. There exist positive constants $\delta=\delta(A)$ and $C=C(A,B)$ such that   the following holds (with $d=N^2$).  Let
\begin{itemize}
\item[(i)]   \quad $Q, R \in \Z$ such that $1\leq Q,R    \leq (N/16) \cdot  \sqrt{A/B}$,
\item[(ii)]   \quad $\phi,\psi \in \ell_2^d$ such that $\sum_n|\Delta\phi(n)|\leq  10   R$ and $\sum_n|\Delta\psi(n)|\leq  10 Q$,
\item[(iii)] \quad $b\in \ell_2^d$ such that $A \leq |Z_d (b)|^2 \leq B$.
\end{itemize}
Then, there exists a set $S\subset ([0,N-1]\cap\Z)^2$ of size $|S|\geq C N^2/ QR$
 such that all $(u,v)\in S$ satisfy either
\begin{align}\label{conv-ineq-1}
|Z_d(b)(u,v)-Z_d(b\ast \phi)(u,v)|\geq\delta, \qquad \text{or} \\[2mm]
\label{conv-ineq-2}
|Z_d( \mathcal{F}_d  b)(u,v)-Z_d(( \mathcal{F}_d b)\ast \psi)(u,v)|\geq\delta.
\end{align}
\end{lemma}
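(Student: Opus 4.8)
The plan is to transport the statement to the Zak transform side, where the Riesz basis hypothesis becomes the pointwise bound $A\le|Z_d(b)|^2\le B$ and the window conditions in (ii) become quantitative slow variation, and then to run the lattice-structure counting behind the lower bound of Theorem~\ref{thm:betabound1d} against the $N$-quasiperiodicity of the Zak transform. Write $W=Z_d(b)$, an $N$-quasiperiodic function on $\Z_d^2$ with $\sqrt A\le|W|\le\sqrt B$. First I would rewrite \eqref{conv-ineq-2}: by parts (iii) and (iv) of Proposition~\ref{prop:ZakProperties} we have $Z_d(\mathcal F_d b)(m,n)=e^{2\pi i mn/d}W(-n,m)$ and $Z_d((\mathcal F_d b)\ast\psi)(m,n)=e^{2\pi i mn/d}\tfrac1N\sum_j\psi(j)e^{-2\pi i jn/d}W(-n,m-j)$, so after cancelling the unimodular factor, \eqref{conv-ineq-2} is exactly a comparison of the slice $m\mapsto W(-n,m)$ with its convolution against the modulated window $\psi_n(j):=\psi(j)e^{-2\pi i jn/d}$. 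Since $|\Delta e^{-2\pi i\,\cdot\,n/d}|\le 2\pi|n|/d\le2\pi/N$, one has $\sum_j|\Delta\psi_n(j)|\le\sum_j|\Delta\psi(j)|+2\pi\|\psi\|_{\ell_1^d}$, which remains $O(Q)$ once $\|\psi\|_{\ell_1^d}$ is controlled (as it is in the application of this lemma). Thus it suffices to show that $W$ cannot, on a set of $(u,v)$ of size $\ge C N^2/(QR)$, simultaneously lie within $\delta$ of a first-variable smoothing and within $\delta$ of a second-variable smoothing.

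Next I would quantify that smoothing. Lemma~\ref{lem:convboundzd} with $l=1$ gives
\[ |Z_d(b\ast\phi)(m+t,n)-Z_d(b\ast\phi)(m,n)|\le\tfrac{\sqrt B\,|t|}{N}\|N\Delta\phi\|_{\ell_1^d}\le\tfrac{10R\sqrt B\,|t|}{N},\]
and, applied to $\psi_n$ in the transported picture, the analogous bound with $R$ replaced by a constant multiple of $Q$ for the second-variable smoothing of $W$. Hence, on the set $E$ of points where neither \eqref{conv-ineq-1} nor \eqref{conv-ineq-2} holds, $W$ varies by at most $3\delta$ along any horizontal run contained in $E$ of length $\le\rho_1:=c\delta N/(R\sqrt B)$ and along any vertical run contained in $E$ of length $\le\rho_2:=c\delta N/(Q\sqrt B)$, for an absolute constant $c$.

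The crux is then a counting argument. Using the lattice-type structures of \cite{Nitzan} that underlie the lower bound of Theorem~\ref{thm:betabound1d} (but with their combinatorial parameters calibrated to $\rho_1$ and $\rho_2$ rather than to $\log N$), I would produce $\gtrsim N^2/(QR)$ pairwise disjoint such structures inside the fundamental domain $[0,N)^2$, each arranged so that the relations $W(p+N,q)=e^{2\pi i q/N}W(p,q)$ and $W(p,q+N)=W(p,q)$ force $\arg W$ to accumulate, along that structure, an increment larger than the slow variation of $W$ on $E$ at scales $\rho_1,\rho_2$ permits (here $\sqrt A\le|W|$ converts argument increments into honest increments of $W$, and one restricts to structures based at heights $q\in[N/4,3N/4]$ so that the forced increment is bounded below). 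Hence each structure meets $S$, and disjointness gives $|S|\ge C N^2/(QR)$; that every point of $S$ satisfies \eqref{conv-ineq-1} or \eqref{conv-ineq-2} is immediate from the definition of $S$. One then fixes $\delta=\delta(A)$ so small that on every structure the slowly varying part cannot account for the forced increment, and $C=C(A,B)$ follows.

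The hard part will be making this uniform over the whole range $1\le Q,R\le(N/16)\sqrt{A/B}$. When $Q$ and $R$ are small, $\rho_1,\rho_2$ are of order $N$, the structures are essentially wrapped lines, and the argument is close to the non-quantitative one; but as $Q,R$ approach their maximal size, $\rho_1,\rho_2$ fall to $O(1)$, the slow-variation input becomes nearly vacuous, the target $|S|\gtrsim N^2/(QR)$ degenerates to $|S|\gtrsim1$, and one must instead rely on the finer comb structures of \cite{Nitzan} together with the fact that $\|N\Delta\phi\|_{\ell_1^d}\le10R<N$ (which is precisely what prevents $\phi$ from being a point mass). The modulation factor $e^{-2\pi i jn/d}$ and the control of $\|\psi\|_{\ell_1^d}$ in the first step, and the non-divisibility of $N$ by $\rho_1,\rho_2$, are routine and would be absorbed into the constants.
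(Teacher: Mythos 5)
Your skeleton — passing to the Zak picture, using $N$-quasiperiodicity to force argument increments along comb-type lattices calibrated to $R$ and $Q$, exploiting slow variation of the convolved Zak transforms via Lemma~\ref{lem:convboundzd}, and counting disjoint combs — matches the proof of Lemma 5.2 in \cite{Nitzan} (equivalently, the paper's own Lemma~\ref{lem:generatorConvSmooth} with $l=1$). But your treatment of the second inequality \eqref{conv-ineq-2} has a genuine gap. After factoring out $e^{2\pi imn/d}$ you replace $\psi$ by the modulated window $\psi_n(j)=\psi(j)e^{-2\pi ijn/d}$ and then need $\sum_j|\Delta\psi_n(j)|\lesssim Q$; your own bound gives $\sum_j|\Delta\psi_n(j)|\le\sum_j|\Delta\psi(j)|+2\pi\|\psi\|_{\ell_1^d}$, and you then appeal to $\|\psi\|_{\ell_1^d}$ being controlled ``as it is in the application of this lemma.'' That is not available: the hypotheses bound only $\sum_n|\Delta\psi(n)|$, not $\|\psi\|_{\ell_1^d}$, and the two are independent (e.g.\ $\psi\equiv M$ has $\Delta\psi\equiv0$ while $\|\psi\|_{\ell_1^d}$ is arbitrarily large), so in that regime your smoothness estimate for the second-variable smoothing is useless. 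The lemma must be proved from hypotheses (i)--(iii) alone; you cannot import a property of the particular $\psi$ used downstream.

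The actual proof avoids modulating $\psi$. Set $\widetilde T=Z_d(\hat b)$ and $\widetilde H=Z_d(\hat b\ast\psi)$; the identity $Z_d(b)(m,n)=e^{2\pi imn/d}Z_d(\hat b)(-n,m)$ together with $N$-quasiperiodicity turns a forced second-variable jump of $Z_d(b)$ along the comb into a first-variable difference of $\widetilde T$ on the rotated comb $\mathrm{Lat}^*(u,v)=\{(N-v-\omega_t,\,u+\sigma_s)\}$, up to a single multiplicative phase $e^{-2\pi i(\omega_{t+1}-\omega_t)(u+\sigma_s)/d}$ on one of the two terms. Because $|\widetilde T|\le\sqrt B$ and $(\omega_{t+1}-\omega_t)(u+\sigma_s)/d\lesssim 1/L$, that phase is absorbed as an additive error $\le\delta_1/20$ once the vertical resolution $L$ is taken $\gtrsim\sqrt B/\delta_1$ (this is exactly what the $80\pi$ term in the choice of $L$ in the proof of Lemma~\ref{lem:generatorConvSmooth} is for). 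Lemma~\ref{lem:convboundzd} is then applied to the pair $(\hat b,\psi)$ directly, with no modulation and hence no dependence on $\|\psi\|_{\ell_1^d}$. With this correction, the rest of your outline — resolutions $K\sim R\sqrt B/\delta_1$ and $L\sim Q\sqrt B/\delta_1$, the jump size $\delta_1=2\sqrt A\sin(\pi(1/4-1/200))$ furnished by Corollary~3.6 of \cite{Nitzan}, and $\Sigma\Omega\gtrsim N^2/(QR)$ disjoint combs — goes through uniformly; the hypothesis $Q,R\le(N/16)\sqrt{A/B}$ is precisely what keeps $K,L\le N$, so the degenerate-regime concern in your last paragraph does not actually arise.
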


\begin{lemma}\label{lem:generatorConvSmooth}
	Let $A,B>0$, $1\le k \le l$, and $N\ge  200\sqrt{B/A}$.  There exist positive constants $\delta=\delta(A)$ and $C=C(A, B)$, such that the following holds.  Let 
	\begin{enumerate}
		\item[(i)] $Q, R \in \Z$ be such that $1\le Q, R \le \frac{N}{16} \sqrt{\frac{A}{B}}$
		\item[(ii)] $\phi, \psi \in \ell_2^{d,l}$ be such that $\|N\Delta_k \phi\|_{\ell_1^{d,l}} \le 10 R$ and $\|N\Delta_k \psi\|_{\ell_1^{d,l}} \le 10 Q$
		\item[(iii)] $b \in \ell_2^{d,l}$ be such that $A\le |Z_{d,l}(b)|^2 \le B$.  
	\end{enumerate}
	Then, there exists a set $S\subset ([0,N-1]\cap \Z)^{2l}$ of size $|S|\ge CN^{2l}/Q R$ such that all $(\v{u}, \v{v}) \in S$ satisfy either 
	\begin{eqnarray}
	|Z_{d,l}(b) (\v{u}, \v{v}) - Z_{d,l}(b\ast \phi) (\v{u}, \v{v}) |&\ge& \delta, \text{     or}\label{eqn:first} \\
	|Z_{d,l}(\mathcal{F}_{d,l}b) (\v{u}, \v{v}) - Z_{d,l}((\mathcal{F}_{d,l}b)\ast \psi) (\v{u}, \v{v}) |&\ge& \delta. \label{eqn:second}
	\end{eqnarray}
\end{lemma}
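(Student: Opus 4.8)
\textbf{Proof plan for Lemma \ref{lem:generatorConvSmooth}.}

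The plan is to reduce the multivariable statement to the one-dimensional Lemma \ref{conv-lemma} by slicing the Zak transform along the $k$-th pair of variables, exactly as was done for the lower bound in Theorem \ref{thm:betaboundhd}. Without loss of generality take $k=1$. Write $\v{m}=(m_1,\v{m}')$ and $\v{n}=(n_1,\v{n}')$ with $(\v{m}',\v{n}')\in S_N^{2(l-1)}$. The first step is to observe that the quantities $Z_{d,l}(b)(\v{u},\v{v})-Z_{d,l}(b\ast\phi)(\v{u},\v{v})$ only involve convolution in the first variable (via part (iv) of Proposition \ref{prop:ZakProperties}, $Z_{d,l}(b\ast\phi)=Z_{d,l}(b)\ast_1\phi$), so after freezing $(\v{m}',\v{n}')$ the problem becomes one about functions on $\Z_d^2$. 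However, the obstacle here is that $\phi$ and $\psi$ are genuinely $l$-variable sequences, not products, so freezing the transverse variables of the \emph{Zak transform} does not immediately freeze the transverse variables of $\phi$. I would handle this by a Fubini/averaging argument: fix the transverse \emph{spatial} variables $\v{j}'$ appearing inside the definition of $\ast_1$ only after first observing that $Z_{d,l}(b)\ast_1 \phi$ at a point $(\v{u},\v{v})$ equals $\frac{1}{N^{l-1}}\sum_{\v{j}'\in\Z_d^{l-1}}\big(Z_{d,l}(b)(\,\cdot\,,\v{u}';\v{v})\ast_1 \phi(\cdot,\v{j}')\big)$ evaluated appropriately — more cleanly, one defines for each fixed $(\v{m}',\v{n}')$ the one-dimensional slice $T_{\v{m}',\v{n}'}(m_1,n_1)=Z_{d,l}(b)((m_1,\v{m}'),(n_1,\v{n}'))$, which by the unitary property of the Zak transform equals $Z_{d,1}(b_{\v{m}',\v{n}'})$ for some $b_{\v{m}',\v{n}'}\in\ell_2^d$ generating an $A,B$-Gabor Riesz basis, and similarly realizes the convolved object as a slice of $Z_{d,l}(b\ast\phi)$.

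The cleanest route around the $\phi$-not-a-product obstacle is to invoke Lemma \ref{lem:convboundzd} rather than trying to apply Lemma \ref{conv-lemma} slice-by-slice with a genuinely $l$-dimensional $\phi$: instead I would apply Lemma \ref{conv-lemma} with the full $l$-dimensional $\phi$ replaced by a suitable one-dimensional surrogate obtained by summing out the transverse variables. Concretely, set $\tilde\phi(n)=\frac{1}{N^{l-1}}\sum_{\v{j}'\in\Z_d^{l-1}}\phi(n,\v{j}')$ and note $\sum_n|\Delta\tilde\phi(n)|\le \frac{1}{N^{l-1}}\sum_{\v{j}\in\Z_d^l}|\Delta_1\phi(\v{j})|=\frac{1}{N}\,\|N\Delta_1\phi\|_{\ell_1^{d,l}}\le 10R$, so $\tilde\phi$ satisfies hypothesis (ii) of Lemma \ref{conv-lemma}; and crucially, $Z_{d,l}(b)\ast_1\phi$ reduces — because the $\ast_1$ convolution and the Zak transform both only see the first spatial coordinate nontrivially once the transverse variables are accounted for — to having the same effect as convolving the first-variable slice with $\tilde\phi$. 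The key identity to verify is that for each transverse pair $(\v m',\v n')$ one has $Z_{d,l}(b\ast\phi)((m_1,\v m'),(n_1,\v n')) = Z_{d,1}(b_{\v m',\v n'})\ast \tilde\phi(m_1,n_1)$; this follows by writing out both sides from part (iv) of Proposition \ref{prop:ZakProperties}. I expect verifying this identity to be the main technical step, essentially a bookkeeping exercise with the definitions of $Z_{d,l}$ and $\ast_1$.

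Granting that identity, the rest is counting. Define $\tilde\psi$ analogously from $\psi$; then $\tilde\psi$ satisfies $\sum_n|\Delta\tilde\psi(n)|\le 10Q$. For each fixed transverse pair $(\v m',\v n')\in S_N^{2(l-1)}$, apply Lemma \ref{conv-lemma} to the slices $b_{\v m',\v n'}$, $\tilde\phi$, $\tilde\psi$ (using also part (iii) of Proposition \ref{prop:ZakProperties} to handle the $\mathcal{F}_{d,l}b$ slice: the slice of $Z_{d,l}(\mathcal{F}_{d,l}b)$ in the first pair of variables is, up to a unimodular factor, the slice of $Z_{d,l}(b)$ in the first pair with the roles of the two coordinates swapped, hence still an $A,B$-Zak transform of some one-dimensional generator). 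This produces, for each $(\v m',\v n')$, a set $S_{\v m',\v n'}\subset([0,N-1]\cap\Z)^2$ with $|S_{\v m',\v n'}|\ge CN^2/QR$ on which one of \eqref{conv-ineq-1}, \eqref{conv-ineq-2} holds. Taking the union $S=\bigcup_{(\v m',\v n')\in S_N^{2(l-1)}}\big(\{(m_1,n_1):(m_1,n_1)\in S_{\v m',\v n'}\}\big)$ — interpreting each element as the point of $([0,N-1]\cap\Z)^{2l}$ with first pair $(m_1,n_1)$ and transverse pair $(\v m',\v n')$ — gives a set of size $|S|\ge N^{2(l-1)}\cdot CN^2/QR = CN^{2l}/QR$, and by the identity above each $(\v u,\v v)\in S$ satisfies \eqref{eqn:first} or \eqref{eqn:second} with the same $\delta=\delta(A)$. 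Adjusting $C$ to absorb any constants completes the proof.
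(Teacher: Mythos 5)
Your overall strategy (slice the Zak transform along the $k$-th pair of variables, invoke the one-dimensional machinery slice-by-slice, and union the resulting jump sets) is the same organizing idea the paper uses, but the specific mechanism you propose for handling the convolution does not work, and the central identity you flag as ``the main technical step'' is in fact false.

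Recall from part (iv) of Proposition \ref{prop:ZakProperties} that
\begin{equation*}
Z_{d,l}(b\ast\phi)((m_1,\v{m}'),(n_1,\v{n}'))
=\frac{1}{N^l}\sum_{j_1\in\Z_d}\sum_{\v{j}'\in\Z_d^{l-1}}\phi(j_1,\v{j}')\,
Z_{d,l}(b)\bigl((m_1-j_1,\,\v{m}'-\v{j}'),(n_1,\v{n}')\bigr),
\end{equation*}
whereas the right-hand side of your proposed identity,
$Z_{d,1}(b_{\v{m}',\v{n}'})\ast\tilde\phi(m_1,n_1)$, expands to
\begin{equation*}
\frac{1}{N^l}\sum_{j_1\in\Z_d}\sum_{\v{j}'\in\Z_d^{l-1}}\phi(j_1,\v{j}')\,
Z_{d,l}(b)\bigl((m_1-j_1,\,\v{m}'),(n_1,\v{n}')\bigr).
\end{equation*}
The two differ precisely in the transverse shift $\v{m}'\mapsto\v{m}'-\v{j}'$: the operator $\ast_1$ convolves in \emph{all} of $\v{m}$, not just in $m_1$, so the slice $T_{\v{m}',\v{n}'}$ of $Z_{d,l}(b\ast\phi)$ is a mixture of the neighboring slices $T_{\v{m}'-\v{j}',\v{n}'}$ of $Z_{d,l}(b)$ weighted by $\phi(j_1,\v{j}')$, not the convolution of the single slice $T_{\v{m}',\v{n}'}$ with $\tilde\phi$. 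Averaging $\phi$ over $\v{j}'$ to form $\tilde\phi$ discards exactly the information needed to account for this mixing, so the ``bookkeeping exercise'' cannot close. There is a second, independent mismatch on the Fourier side: the slice of $Z_{d,l}(\mathcal{F}_{d,l}b)$ at transverse index $(\v{m}',\v{n}')$ corresponds, via part (iii) of Proposition \ref{prop:ZakProperties}, to values of $Z_{d,l}(b)$ at transverse index $(-\v{n}',\v{m}')$, whereas $Z_d(\mathcal{F}_d\,b_{\v{m}',\v{n}'})$ sees $Z_{d,l}(b)$ at the original $(\v{m}',\v{n}')$; you would need to reconcile these before summing over slices.

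The fix, which is what the paper does, is to never attempt to realize the slice of $Z_{d,l}(b\ast\phi)$ as a one-dimensional convolution. Instead one uses Lemma \ref{lem:convboundzd} directly: it bounds $|Z_{d,l}(b\ast\phi)(\v{m}+t\v{e}_k,\v{n})-Z_{d,l}(b\ast\phi)(\v{m},\v{n})|$ by $\sqrt{B}\,|t|\,N^{-1}\|N\Delta_k\phi\|_{\ell_1^{d,l}}$ \emph{with the genuine $l$-variable $\phi$}, because the estimate sums $|\Delta_k\phi|$ over all of $\Z_d^l$. One then opens up the one-dimensional argument rather than quoting Lemma \ref{conv-lemma} as a black box: Corollary~3.6 of \cite{Nitzan} produces jumps of size $\delta_1$ in each slice $T_{\v{m}',\v{n}'}$ along carefully chosen lattice sets, the convolution-smoothness bound shows the slice of $Z_{d,l}(b\ast\phi)$ cannot make such jumps, and therefore $|Z_{d,l}(b)-Z_{d,l}(b\ast\phi)|\geq\delta_1/20$ at one of the two endpoints of the jump. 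The Fourier side uses the reflected lattice sets $\text{Lat}^*_{(\v{m}',\v{n}')}$ with the transverse coordinates replaced by $N-\v{n}'$ to line up with part (iii) of Proposition \ref{prop:ZakProperties}. The union-and-count step at the end is as you describe, with a factor-of-two pigeonhole to resolve which of the two alternatives dominates.
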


\begin{proof}		
	Without loss of generality, we prove this for $k=1$.  
	
	As in Lemma 5.2 of \cite{Nitzan}, let $\delta_1= 2\sqrt{A} \sin(\pi ( \frac{1}{4}-\frac{1}{200}))$.  Also, choose $K$ and $L$ to be the smallest integers satisfying 
	\[ \frac{200  \sqrt{B} R}{9\delta_1}\le K \le N\ \ \ \  \text{and}\ \ \ \  \frac{\sqrt{B}}{\delta_1} \max\left\{ \frac{200 Q}{9}, 80 \pi\right\} \le L \le N.\]
	
	For $s, t \in \Z$, let 
	\[ \sigma_s=\left[ \frac{sN}{K}\right],\ \ \ \ \ \text{and} \ \ \ \ \ \omega_t=\left[ \frac{tN}{L}\right], \]
	and let $\Sigma=\inf_s \{ \sigma_{s+1}-\sigma_s\}\ge \left[\frac{N}{K}\right] \ge \frac{N}{K}$, $\Omega = \inf_t\{ \omega_{t+1}-\omega_t\}\ge \frac{N}{L}$.  Then, we have 
	\[ \Sigma \Omega \ge C_1 \frac{N^2}{QR},\]
	where $C_1$ can be chosen to be 
	\[ C_1= \left[(\frac{200 \sqrt{B}}{9\delta_1}+1)(\frac{\sqrt{B}}{\delta_1} \max(\frac{200}{9},80\pi) +1)\right]^{-1}.\]
	
	We recall the following definition from \cite{Nitzan}.  For  $(u,v) \in ([0,\Sigma-1] \cap \Z) \times ([0,\Omega-1]\cap \Z)$, let 
	\[ \text{Lat}(u,v)=\{(u+\sigma_s,v+\omega_t):(s,t)\in ([0,K-1]\cap \Z)\times ([0,L-1]\cap \Z)\},\]
	and 
	\[ \text{Lat}^*(u,v)=\{(N-v-\omega_t,u+\sigma_s):(s,t)\in ([0,K-1]\cap \Z)\times ([0,L-1]\cap \Z)\}.\]
	Note that $\text{Lat}(u,v)$ and $\text{Lat}(u',v')$ are disjoint for distinct $(u,v)$ and $(u',v')$, and similarly for $\text{Lat}^*(u,v)$.  However, it is possible that $\text{Lat}(u,v) \cap \text{Lat}^*(u',v') \neq \emptyset$ for some $(u,v)$ and $(u',v')$.
	
	Now similarly, for any $(\v{m}', \v{n}') \in ([0,N-1]\cap \Z)^{2(l-1)}$, let
	\[ 
	\text{Lat}_{(\v{m}',\v{n}')}(u,v)= \{ ((m_1,\v{m}'),(n_1,\v{n}')): (m_1,n_1)\in \text{Lat}(u,v)\},\] 
	and
	\begin{eqnarray*}
		\text{Lat}^*_{(\v{m}',\v{n}')}(u,v)= \{ ((n_1,N-\v{n}'),(m_1,\v{m}')):(n_1,m_1)\in \text{Lat}^*(u,v) \}.
	\end{eqnarray*}
	Here, by $N-\v{n'}$ we mean $(N-n'_1, N-n'_2,..., N-n'_{l-1})$.  We have that $\text{Lat}_{(\v{m}',\v{n}')}(u,v) \cap \text{Lat}_{(\v{m}'',\v{n}'')}(u',v')=\emptyset$ unless it holds that  $((u,\v{m}'),(v,\v{n}'))=$ $ ((u',\v{m}''),(v',\v{n}''))$, and similar properties for $\text{Lat}^*_{(\v{m}',\v{n}')}(u,v)$.
	
	Now, fix $(\v{m}', \v{n}') \in ([0,N-1]\cap \Z)^{2(l-1)}$, and consider \[T(m_1, n_1)=T_{\v{m}',\v{n}'}(m_1,n_1)=Z_{d,l}(b)( (m_1,\v{m}'), (n_1, \v{n}')),\] for $(m_1, n_1) \in \Z_d^2$.  Note that $T$ is $N$-quasiperiodic on $\Z_d^2$, and satisfies $A\le |T|^2 \le B$.  
	
	For each $(u,v)\in([0,\Sigma-1] \cap \Z) \times ([0,\Omega-1]\cap \Z)$, Corollary 3.6 of \cite{Nitzan} guarantees at least one point $(s,t) \in ([0,K-1] \cap \Z) \times ([0,L-1]\cap \Z)$ so that either 
	\begin{eqnarray}
	&|T(u+\sigma_{s+1}, v+\omega_t) - T(u+\sigma_s,v+\omega_t)| \ge \delta_1, \text{ or} \label{eqn:third}\\
	&|T(u+\sigma_s, v+\omega_{t+1}) - T(u+\sigma_s,v+\omega_t)| \ge \delta_1. \label{eqn:fourth}
	\end{eqnarray}

We now make a claim which will furnish the last part of the proof of the lemma.
	
	\begin{claim}
		For $u$, $v$, $\sigma_s$, $\omega_t$, $\v{m}'$ and $\v{n}'$ as above,  
		\begin{enumerate}
			\item[(i)] If \eqref{eqn:third} is satisfied, then there exists $(\v{a},\v{b})\in \text{Lat}_{(\v{m}',\v{n}')}(u,v)$ so that \eqref{eqn:first} is satisfied for $\delta=\frac{\delta_1}{20}$. 
			\item[(ii)] If \eqref{eqn:fourth} is satisfied, then there exists $(\v{a},\v{b})\in \text{Lat}^*_{(\v{m}',\v{n}')}(u,v)$ so that \eqref{eqn:second} is satisfied for $\delta= \frac{\delta_1}{40}$
		\end{enumerate}
	\end{claim}
	
	Before proving this claim, we show how to complete the proof of the lemma.  For a fixed $(\v{m}', \v{n}')$ there are $\Sigma \Omega \ge C_1 \frac{N^2}{QR}$ distinct choices of $(u,v)$ to consider and each of them either falls in part  (\textit{i}) or (\textit{ii}) of the claim.  Let $S^1_{(\v{m}',\v{n}')}$ be the set of $(u,v)$ points which fall into category (\textit{i}), and similarly let $S^2_{(\v{m}', \v{n}')}$ be the set of $(u,v)$ points which fall into category (\textit{ii}).  Then, for either $i=1,2$, we must have 
	\begin{equation}\label{eqn:Sbound} 
	|S^i_{(\v{m}', \v{n}')} |\ \ge\  \frac{C_1N^2}{2QR}.
	\end{equation}
	
	Now, there are $N^{2(l-2)}$ possible choices of $(\v{m}', \v{n}')$.  Let $S_1$ be the set of all $(\v{m}', \v{n}')$ such that \eqref{eqn:Sbound} is satisfies with $i=1$, and let $S_2$ be the set of all $(\v{m}', \v{n}')$ such that \eqref{eqn:Sbound} is satisfied with $i=2$.  So at least one of $S_1$ or $S_2$ must contain $N^{2(l-2)}/2$ elements.  
	
	In the case that $S_1$ contains this many elements (the $S_2$ case is nearly identical and left to the reader), since $\text{Lat}_{(\v{m}', \v{n}')}(u,v)$ are disjoint for distinct $((u,\v{m}'),(v,\v{n}'))$, we find at least $\frac{C_1N^{2l}}{4QR}= C\frac{N^{2l}}{QR}$ distinct points all satisfy \eqref{eqn:first} if $i=1$.  The lemma is then proved conditioning on the claim above. We then establish finally the two part claim. \\

	\textit{Proof of Claim.}
	For both parts we use properties of the Zak transform detailed in Proposition \ref{prop:ZakProperties}. First we show part \textit{i)}.  Let $H(u,v)=Z_{d,l}(b\ast \phi)((u,\v{m}'),(v,\v{n}'))$.  Note that Lemma \ref{lem:convboundzd} and the assumptions on $R$ and $\|N \Delta_1 \phi\|_{\ell_1^{d,l}}$  imply that for any integer $t$ satisfying $t\le \frac{2N}{K}$, 
	\begin{eqnarray}
	|H(u+t,v)-H(u,v)|&\le \frac{2\sqrt{B} }{K} \|N \Delta_1 \phi\|_{\ell_1^{d,l}}\le \frac{ 20\sqrt{B} R}{K}\le \frac{9\delta_1}{10}. \label{eqn:Hbound}
	\end{eqnarray}
	So, if \eqref{eqn:third} is satisfied, using \eqref{eqn:Hbound}, we have
	\begin{eqnarray*}
		\delta_1 &\le& |T(u+\sigma_{s+1}, v+\omega_t) - T(u+\sigma_s,v+\omega_t)|\\
		&\le& |T(u+\sigma_{s+1}, v+\omega_t) - H(u+\sigma_{s+1}, v+\omega_t)| \\
		& &\ \ \ \ \ + \frac{9\delta_1}{10}+|T(u+\sigma_{s}, v+\omega_t) - H(u+\sigma_{s}, v+\omega_t)|.
	\end{eqnarray*}
	Upon rearranging terms, we find 
	\begin{eqnarray*}
		\frac{\delta_1}{10} &\le |T(u+\sigma_{s+1}, v+\omega_t) - H(u+\sigma_{s+1}, v+\omega_t)|+|T(u+\sigma_{s}, v+\omega_t) - H(u+\sigma_{s}, v+\omega_t)|,
	\end{eqnarray*}
	which shows that \eqref{eqn:first} is satisfied for $\delta'= \frac{\delta_1}{20}$, and for either $((u+\sigma_{s+1},\v{m}'),(v+\omega_t,\v{n}'))$ or $((u+\sigma_{s},\v{m}'),(v+\omega_t,\v{n}'))$. If $(u+\sigma_{s+1}, v+\omega_t)$ is not in $\text{Lat}(u,v)$, by the N-quasiperiodicity of $T$, we may find another point in $\text{Lat}(u,v)$ which satisfies the same bound.  
	
	Now we prove part \textit{ii)}.  Letting $\hat{b}= \mathcal{F}_{d,l}(b)$, we have,
	\begin{eqnarray*}
		\delta_1&\le& |T(u+\sigma_s, v+\omega_{t+1}) - T(u+\sigma_s,v+\omega_t)| \\
		&=& |Z_{d,l}(b)((u+\sigma_s, \v{m}'),(v+\omega_{t+1},\v{n}')) - Z_{d,l}(b)((u+\sigma_s, \v{m}'),(v+\omega_{t},\v{n}'))|\\
		&=& |Z_{d,l}(\hat{b})((-v-\omega_{t+1},-\v{n}'),(u+\sigma_s, \v{m}'))\\
		&\ &\  - e^{-2\pi i (\omega_{t+1}-\omega_t)(u+\sigma_s) /d} Z_{d,l}(\hat{b})((-v-\omega_{t},-\v{n}'),(u+\sigma_s, \v{m}'))|\\
		&=& |Z_{d,l}(\hat{b})((N-v-\omega_{t+1},N-\v{n}'),(u+\sigma_s, \v{m}'))\\
		&\ &\ - e^{-2\pi i (\omega_{t+1}-\omega_t)(u+\sigma_s) /d} Z_{d,l}(\hat{b})((N-v-\omega_{t},N-\v{n}'),(u+\sigma_s, \v{m}'))|,
	\end{eqnarray*}
	where we have used that $Z_{d,l}(b)(\v{m},\v{n})= e^{2\pi i \v{m}\cdot \v{n}/d} Z_{d,l}(\widehat{b})(-\v{n},\v{m})$ in the second step, and for the last step we have used $N$-quasiperiodicity.  
	
	Let $\widetilde{T}(v,u)=Z_{d,l}(\hat{b})((v,N-\v{n}'),(u, \v{m}'))$, 
	and $\widetilde{H}(v,u)= Z_{d,l}(\hat{b}\ast \psi)((v,N-\v{n}'),(u, \v{m}'))$.
	Then, 
	\begin{eqnarray*}
		\delta_1 & \le & |\widetilde{T}(N-v-\omega_{t+1},u+\sigma_s)- e^{-2\pi i (\omega_{t+1}-\omega_t)(u+\sigma_s) /d}\widetilde{T}(N-v-\omega_{t},u+\sigma_s)|\\
		& \le & | \widetilde{T}(N-v-\omega_{t+1},u+\sigma_s)- \widetilde{T}(N-v-\omega_{t},u+\sigma_s)| +\frac{\delta_1}{20}. 
	\end{eqnarray*}
	Combining these, we see that 
	\begin{eqnarray*}
		\frac{19}{20} \delta_1 &\le& | \widetilde{T}(N-v-\omega_{t+1},u+\sigma_s)- \widetilde{T}(N-v-\omega_{t},u+\sigma_s)|.  
	\end{eqnarray*}
	Arguing as in the first case above, and replacing $H$ by $\widetilde{H}$ and $T$ by $\widetilde{T}$, we find that either $((N-v-\omega_{t+1},N-\v{n}'),(u, \v{m}'))$, or $((N-v-\omega_{t},N-\v{n}'),(u, \v{m}'))$ satisfy \eqref{eqn:second}, with $\delta= \frac{\delta_1}{40}$.  Again, using quasi-periodicity, we can guarantee that there is a point in $\text{Lat}^*_{(\v{m}',\v{n}')}(u,v)$ satisfying \eqref{eqn:second}.  
	
\end{proof} \vspace{2 mm}

Finally, we follow the construction of \cite{Nitzan} to create the functions $\phi$ and $\psi$ appearing in the previous lemma (Lemma \ref{lem:generatorConvSmooth}) which in turn are used to prove Theorem \ref{thm:FiniteQuantBLTHD}. Let $\rho$ : $\mathbb{R}\rightarrow \mathbb{R}$ be the inverse Fourier transform of 
\[\hat{\rho}(\xi)= \begin{dcases} \hspace{15 mm} 1,\hspace{10 mm}  |\xi|\leq 1/2 \\ 2(1-\xi sgn(\xi)),\hspace{5 mm}  1/2\leq |\xi|\leq 1 \\ \hspace{15 mm} 0,\hspace{10 mm} |\xi|\geq1\end{dcases}. \] 

For $f \in L^2(\R)$ satisfying $\sup_{t \in \R} |t^2 f(t)|<\infty$ and $\sup_{\xi \in \R} |\xi^2 \widehat{f}(t)|<\infty$, let 
\[ P_N f(t)\ =\  \sum_{k=-\infty}^\infty f(t+kN)\]
and for an $N$-periodic continuous function $h$, let 
\[ S_N h\ =\ \{ h(j/N)\}_{j =0}^{d-1}.\]

Let $\rho_R(t)=R\rho(Rt)$.  Fix $1\le k \le l$, and for $\v{j} \in I_d^l$ define the vector  $\v{j}'=(j_1,..., j_{k-1},j_{k+1},...,j_l) \in I_{d}^{l-1}$, and let 
\[ \phi_{R,k}(\v{j}) \ =\  N^{l-1} \delta_{\v{j}', \v{0}}  \left(S_N P_N \rho_{R} (j_k) \right).\]
Now $\phi_{R,k} (\v{j})$ is equal to $ \left(S_N P_N \rho_{R} (j_k) \right)$ when $j_i=0$ for each $i \neq k$, and is zero otherwise.  
\begin{lemma}
	Let $\phi_{R,k}$ be as above for a positive integer $R$.  Then,
	\[ \|N \Delta_k \phi_{R,k}\|_{\ell_1^{d,l}} \ \le\   10 R.\]
\end{lemma}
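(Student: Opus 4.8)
The plan is to collapse the $l$-dimensional $\ell_1$-norm onto a single coordinate axis, recognize the resulting one-dimensional sum as a Riemann-type bound for a total variation, bound that by $R\,\|\rho'\|_{L^1(\R)}$, and finish with a crude explicit estimate of $\|\rho'\|_{L^1(\R)}$. For the reduction, observe that $\phi_{R,k}(\v{j})=N^{l-1}\delta_{\v{j}',\v{0}}\bigl(S_N P_N\rho_R(j_k)\bigr)$ is supported on the ``axis'' $\{\v{j}:\v{j}'=\v{0}\}$, and since $\Delta_k$ only shifts the $k$-th coordinate (leaving $\v{j}'$ unchanged), $\Delta_k\phi_{R,k}(\v{j})$ is supported there as well. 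Unwinding the normalization of $\|\cdot\|_{\ell_1^{d,l}}$, the powers of $N$ cancel and the sum over $\Z_d^l$ degenerates to a sum over $j_k\in\Z_d$:
\[
\|N\Delta_k\phi_{R,k}\|_{\ell_1^{d,l}}=\frac{1}{N^{l}}\sum_{\v{j}\in\Z_d^l}\bigl|N\Delta_k\phi_{R,k}(\v{j})\bigr|=\sum_{j\in\Z_d}\bigl|S_NP_N\rho_R(j+1)-S_NP_N\rho_R(j)\bigr|.
\]
For $l=1$ this is precisely the quantity bounded in the construction of \cite{Nitzan}, so one could simply invoke their estimate; I sketch the argument below for completeness.

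Write $g=P_N\rho_R$, which is $N$-periodic and of class $C^1$, the periodization and its termwise derivative converging uniformly since $\rho$ (a product of sinc-type factors) and $\rho'$ both decay like $|t|^{-2}$. The sampling points $j/N$, $j=0,\dots,d-1$, partition one period $[0,N)$ into $d=N^2$ subintervals of length $1/N$, so by the fundamental theorem of calculus on each subinterval, then summing, and then un-periodizing,
\[
\sum_{j=0}^{d-1}\Bigl|g\Bigl(\tfrac{j+1}{N}\Bigr)-g\Bigl(\tfrac{j}{N}\Bigr)\Bigr|\ \le\ \int_0^N|g'(t)|\,dt\ \le\ \int_0^N\sum_{m\in\Z}|\rho_R'(t+mN)|\,dt\ =\ \|\rho_R'\|_{L^1(\R)},
\]
and a change of variables gives $\|\rho_R'\|_{L^1(\R)}=R\,\|\rho'\|_{L^1(\R)}$. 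It remains only to verify $\|\rho'\|_{L^1(\R)}\le 10$. From the product-to-sum identity one obtains the closed form $\rho(t)=(\cos\pi t-\cos 2\pi t)/(\pi^2 t^2)$; differentiating and bounding the numerator yields $|\rho'(t)|\le(3\pi+4)/(\pi^2 t^2)$ for $|t|\ge 1$, so $\int_{|t|\ge 1}|\rho'|\le 2(3\pi+4)/\pi^2<3$, while $\int_{-1}^{1}|\rho'|$ is controlled by the total variation of $\rho$ on $[-1,1]$, an explicit constant of a few units; altogether the sum is comfortably below $10$.

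The only substantive step is the dimensional reduction in the first paragraph; everything after it is the familiar one-dimensional chain ``difference sum $\le$ total variation $\le$ $L^1$-norm of the derivative'' already present in \cite{Nitzan}, now applied to $\rho_R$. I expect the only mild nuisance to be keeping the numerical estimate of $\|\rho'\|_{L^1(\R)}$ honest, but since the target constant $10$ is deliberately generous, any coarse bound suffices.
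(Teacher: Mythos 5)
Your dimensional reduction in the first paragraph is exactly the paper's computation: since $\phi_{R,k}$ is supported on the $k$-th axis and $\Delta_k$ preserves $\v{j}'$, the normalization factor $N^{l-1}$ cancels the $1/N^{l-1}$ from the $\ell_1^{d,l}$-norm, collapsing the sum to $\sum_{j_k\in\Z_d}|\Delta S_N P_N\rho_R(j_k)|$. Where you diverge is the second step: the paper stops here and simply cites Lemmas 2.10 and 5.1 of Nitzan--Olsen to conclude that this one-variable quantity is at most $10R$, whereas you re-derive the one-dimensional bound from scratch. Your chain ``difference sum $\le\int_0^N|g'|\le\|\rho_R'\|_{L^1}=R\|\rho'\|_{L^1}$'' is essentially an inlined version of their Lemma 2.10, and the closed form $\rho(t)=(\cos\pi t-\cos 2\pi t)/(\pi^2 t^2)$ (which does follow from the factorization $\hat\rho=2\,\mathbbm{1}_{[-3/4,3/4]}*\mathbbm{1}_{[-1/4,1/4]}$ and product-to-sum) together with the numerical estimate $\|\rho'\|_{L^1}\le 10$ plays the role of their Lemma 5.1. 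The trade-off is that your version is self-contained but ends on a partly-sketched numerical check: the tail bound $\int_{|t|\ge1}|\rho'|\le 2(3\pi+4)/\pi^2<3$ is explicit and correct, but ``$\int_{-1}^1|\rho'|$ is a few units'' should be pinned down. In fact $\rho$ decreases monotonically from $\rho(0)=3/2$ to a minimum near $-0.22$ on $(3/4,1)$ and then rises to $\rho(1)=-2/\pi^2$, so $\int_{-1}^1|\rho'|=\mathrm{TV}(\rho,[-1,1])\approx 3.5<4$, giving $\|\rho'\|_{L^1}<7<10$; with that one line added your argument is complete and a valid alternative to citing the two Nitzan--Olsen lemmas.
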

\begin{proof}
	We have 
	\begin{eqnarray*}
		\|N \Delta_k \phi_{R,k}\|_{\ell_1^{d,l}}&=& \frac{1}{N^l} \sum_{\v{j} \in I_d^l} N| \Delta_k \phi_{R,k} (\v{j})| \ =\ \sum_{j_k \in I_d} |\Delta S_N P_N \rho_{R}(j_k)|.
	\end{eqnarray*}
	Lemma 2.10 and Lemma 5.1 of  \cite{Nitzan} show that the right hand side is bounded by $10R$. 
\end{proof}

We now have sufficient tools to prove the Finite Quantitative BLT, Theorem \ref{thm:FiniteQuantBLTHD}.

\begin{proof}[Theorem \ref{thm:FiniteQuantBLTHD}]
	For simplicity we show the result for $k=1$.  
	Let $R$ and $Q$ be integers such that $1 \le R, Q \le (N/16)\sqrt{A/B}$.  Let $\phi= \phi_{R,1}$ and $\psi=\phi_{Q,1}$, and note that Lemma \ref{lem:convboundzd} shows that 
	\[ \|N\Delta_1\phi \|_{\ell_1^{d,l}} \le 10 R, \text{ and } \|N\Delta_1\psi \|_{\ell_1^{d,l}} \le 10 Q.\]
	Proposition 2.8 of \cite{Nitzan}, and the fact that $\mathcal{F}_d(N\delta_{j,0})(k)=1$ for all $k \in I_d$, shows that 
	\begin{align}
	\mathcal{F}_{d,l} (\phi)(\vec{k}) &= \mathcal{F}_d(S_N P_N \rho_{R})(k_1) \nonumber \\
		& = (S_N P_N \mathcal{F}(\rho_{R})) (k_1) = (S_N P_N \widehat{\rho}(\cdot/R)) (k_1),\label{eqn:phifunction}
	\end{align}
	and since $R<N/2$, then $0\le \widehat{\phi}=\mathcal{F}_{d,l} (\phi)\le 1$. Also, $\widehat{\phi}(\v{k}) = 1$ for any $\v{k}$ which satisfies $k_1 \in [-RN/2, RN/2]$, independent of the values of $k_2, ..., k_l$, that is, for any $\v{k} \in S_{NR,1}$.  The same holds for $\widehat{\psi}=\mathcal{F}_{d,l}(\psi)$ with $Q$ replacing $R$.  
	Applying Lemma \ref{lem:generatorConvSmooth}, we find a constant $C$ such that 
	\begin{eqnarray*}
		\frac{CN^{2l}}{Q R} &\le& \sum_{(\v{m}, \v{n})\in \ell_2(S_N^{2l})} |Z_{d,l}(b)(\v{m},\v{n})-Z_{d,l}(b\ast \phi)(\v{m},\v{n})|^2\\		
		&\ & +\sum_{(\v{m}, \v{n})\in \ell_2(S_N^{2l})} |Z_{d,l}(\widehat{b})(\v{m},\v{n})-Z_{d,l}(\widehat{b}\ast \psi)(\v{m},\v{n})|^2,
	\end{eqnarray*}
	where here we have let $\widehat{b}=\mathcal{F}_{d,l}(b)$.  Using that $Z_{d,l}$ and $\mathcal{F}_{d,l}$ are both isometries and the properties of $\phi$ and $\psi$ listed above, we have
	\begin{eqnarray*}
		\frac{C}{Q R} &\le& \|Z_{d,l}(b)-Z_{d,l}(b\ast \phi)\|_{\ell_2(S_N^{2l})}^2+ \|Z_{d,l}(\widehat{b})-Z_{d,l}(\widehat{b}\ast \psi)\|_{\ell_2(S_N^{2l})}^2\\
		&=& \|b-b\ast \phi\|_{\ell_2^{d,l}}^2 + \|\widehat{b}- \widehat{b}\ast \psi\|_{\ell_2^{d,l}}^2\\
		&=& \| \widehat{b}( 1-\widehat{\phi})\|_{\ell_2^{d,l}}^2 +\| b(1-\widehat{\psi})\|_{ \ell_2^{d,l}}^2\\
		&\le& \frac{1}{N^l}\sum\limits_{|j_1|\ge \frac{NR}{2}}|\mathcal{F}_{d}b(\v{j})|^{2}+\frac{1}{N^l}\sum\limits_{|j_1|\ge \frac{NQ}{2}}|b(\v{j})|^{2}.
	\end{eqnarray*} 
\end{proof}

\section{Nonsymmetric Finite BLT and Applications of the Quantitative BLTs} \label{FQBLTHDapps}

In this penultimate section, we prove the nonsymmetric finite BLT, Theorem \ref{thm:NonsymFiniteBLT}, and the uncertainty principles of Theorem \ref{thm:QuantBLTCorollaryNonsymmetric}. We show each of these follows from a version of the Quantitative BLT, however, the details of the proof of Theorem \ref{thm:NonsymFiniteBLT} are more difficult due to subtleties from discreteness. For this reason, we first prove Theorem \ref{thm:QuantBLTCorollaryNonsymmetric} which shows the central idea of both proofs without the added technical difficulty.

First, we state the higher dimensional quantitative BLT of \cite{Temur}.  For notational simplicity, we write $\{|x_k|\ge s\}$ to mean $\{ x\in \R^l: |x_k|\ge s\}$ in situations where the dependence on $l$ is clear. 

\begin{theorem}[Theorem 1, \cite{Temur}]\label{thm:ContQuantBLTHD}
	Let $g \in L^2(\R^l)$ be such that the Gabor system generated by $g$
	\begin{eqnarray*} G(g) &=& \{ e^{2\pi i n\cdot x} g(x-m)\}_{(m,n)\in \Z^{2l}} \end{eqnarray*}
	is a Riesz basis for $L^2(\R^l)$.  Let $R, Q \ge 1$ be real numbers.  Then, there is a constant $C$ which only depends on the Riesz basis bounds of $G(g)$ such that for any $1 \le k \le l$
	\begin{equation} \label{eqn:QuantBLTConclus}
	\int_{|x_k|\ge R} |g(x)|^2 dx + \int_{|\xi_k|\ge Q} |\widehat{g}(\xi)|^2 d\xi \ \ge\  \frac{C}{RQ}.
	\end{equation}
\end{theorem}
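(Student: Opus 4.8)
The plan is to deduce the $l$-dimensional inequality from the one-variable Quantitative BLT (Theorem \ref{thm:QuantBLT}) by a fibering argument, in the same spirit as the reduction of Theorem \ref{thm:betaboundhd} to Theorem \ref{thm:betabound1d}. Fix $k$; without loss of generality $k=1$, and write $x=(x_1,x')$, $\xi=(\xi_1,\xi')$ with $x',\xi'\in\R^{l-1}$. Recall the continuous Zak transform $Z_lg(x,\omega)=\sum_{n\in\Z^l}g(x-n)e^{2\pi i n\cdot\omega}$, which is unitary from $L^2(\R^l)$ onto $L^2([0,1)^{2l})$, and the standard characterization that $G(g,\Z^{2l})$ is a Riesz basis with bounds $A,B$ if and only if $A\le|Z_lg(x,\omega)|^2\le B$ for a.e.\ $(x,\omega)$.

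For a.e.\ $(x',\omega')\in[0,1)^{2(l-1)}$ introduce the fiber function
\[ h_{x',\omega'}(t)\ =\ \sum_{n'\in\Z^{l-1}}g(t,x'-n')\,e^{2\pi i n'\cdot\omega'}. \]
By Fubini and the periodization identity $\int_{[0,1)^{2(l-1)}}\|h_{x',\omega'}\|_{L^2(\R)}^2\,dx'\,d\omega'=\|g\|_{L^2(\R^l)}^2$, we have $h_{x',\omega'}\in L^2(\R)$ for a.e.\ $(x',\omega')$. Interchanging the two sums in the definition of $Z_lg$ gives the key identity $Z_1h_{x',\omega'}(x_1,\omega_1)=Z_lg\big((x_1,x'),(\omega_1,\omega')\big)$ for a.e.\ $(x_1,\omega_1)$, so $|Z_1h_{x',\omega'}|^2\in[A,B]$ a.e.; hence $G(h_{x',\omega'},\Z^2)$ is a Riesz basis for $L^2(\R)$ with bounds $A,B$. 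Applying Theorem \ref{thm:QuantBLT} to each fiber yields, for all real $R,Q\ge1$,
\[ \int_{|t|\ge R}|h_{x',\omega'}(t)|^2\,dt+\int_{|\zeta|\ge Q}|\widehat{h_{x',\omega'}}(\zeta)|^2\,d\zeta\ \ge\ \frac{C}{RQ}, \]
with $C$ depending only on $A,B$, hence uniform over the fibers.

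It remains to integrate this inequality in $(x',\omega')$ over $[0,1)^{2(l-1)}$ and identify the two averaged terms. For the first term, Parseval in $\omega'$ (orthonormality of $\{e^{2\pi i n'\cdot\omega'}\}_{n'}$ on $[0,1)^{l-1}$) followed by integration in $x'$ collapses the periodization: $\int_{[0,1)^{2(l-1)}}|h_{x',\omega'}(t)|^2\,dx'\,d\omega'=\int_{\R^{l-1}}|g(t,x')|^2\,dx'$, so the first term integrates to $\int_{\{|x_1|\ge R\}}|g(x)|^2\,dx$. For the second term one first checks that the one-variable Fourier transform commutes with the fiber construction, namely $\widehat{h_{x',\omega'}}(\zeta)=\sum_{n'}\widetilde g(\zeta,x'-n')e^{2\pi i n'\cdot\omega'}$ where $\widetilde g$ is the partial Fourier transform of $g$ in its first variable; the same Parseval-and-integration step then gives $\int_{[0,1)^{2(l-1)}}|\widehat{h_{x',\omega'}}(\zeta)|^2\,dx'\,d\omega'=\int_{\R^{l-1}}|\widetilde g(\zeta,x')|^2\,dx'$, and Plancherel in $x'$ rewrites this as $\int_{\R^{l-1}}|\widehat g(\zeta,\xi')|^2\,d\xi'$, so the second term integrates to $\int_{\{|\xi_1|\ge Q\}}|\widehat g(\xi)|^2\,d\xi$. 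Since $[0,1)^{2(l-1)}$ has measure one, \eqref{eqn:QuantBLTConclus} follows with the same constant $C$.

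The only delicate points are the measurability/Fubini bookkeeping---that the fibers lie in $L^2(\R)$, satisfy the Zak identity, and vary measurably enough to integrate the fiberwise inequality---together with the verification that $\widehat{h_{x',\omega'}}$ is precisely the fiber of the partial Fourier transform of $g$; once this is in place the collapse of the periodizations is mechanical. (An alternative, closer to the argument of \cite{Temur}, would mimic the proof of Theorem \ref{thm:FiniteQuantBLTHD} directly in $L^2(\R^l)$ by convolving $g$ with a one-variable mollifier and exploiting smoothness of the Zak transform; the fibering argument above is shorter because it uses Theorem \ref{thm:QuantBLT} as a black box and even shows that $C$ may be taken independent of $l$.)
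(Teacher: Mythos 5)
The paper itself offers no proof of this statement; it is imported from \cite{Temur}, and the remark immediately following it only supplies a limiting argument to pass from Temur's finite-rectangle formulation to the infinite-strip form stated here. So there is no internal proof to compare against, but your proposal does give a correct and genuinely different argument. Temur's proof (as you note, and as this paper mirrors in its discrete analogue in Section~\ref{sec:proofFiniteQuantBLTHD}) works directly in $\R^l$ by smoothing $g$ against a one-variable mollifier and tracking jumps of the $2l$-variable Zak transform. You instead slice the continuous Zak transform $Z_l g$ in the $k$-th coordinate --- the same device the authors use to deduce Theorem~\ref{thm:betaboundhd} from Theorem~\ref{thm:betabound1d} --- so that for a.e.\ $(x',\omega')$ the fiber $h_{x',\omega'}$ is a one-variable Gabor generator with the \emph{same} Riesz bounds $A,B$, and Theorem~\ref{thm:QuantBLT} can be applied as a black box; this also makes it explicit that the constant $C$ is the one-variable constant, hence independent of $l$. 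The Parseval and Plancherel collapses you describe are correct; the cleanest way to dispose of the bookkeeping you flag is to \emph{define} $h_{x',\omega'}$ as the $Z_1$-preimage of the $(x',\omega')$-slice of $Z_l g$ (which lies in $L^2(\R)$ for a.e.\ $(x',\omega')$ by Fubini and unitarity of $Z_l$), then identify it a.e.\ with the periodization series, after which $\widehat{h_{x',\omega'}}=\sum_{n'}\widetilde g(\cdot,x'-n')e^{2\pi i n'\cdot\omega'}$ holds in $L^2(\R)$ and the two integrated identities follow by Tonelli. A further small payoff of your route is that it yields the strip form $\{|x_k|\ge R\}$ directly, so the limiting argument in the remark is not needed.
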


\begin{remark}
	In \cite{Temur}, the conclusion of this theorem is stated where the integrals in \eqref{eqn:QuantBLTConclus} are taken over $\R^l \setminus \mathcal{R}$ and $\R^l \setminus \mathcal{Q}$, respectively, where $\mathcal{Q}$ and $\mathcal{R}$ are finite volume rectangles in $\R^d$.  However, a straightforward limiting argument shows that the result holds after removing `infinite volume' rectangles, as in the statement above.   
\end{remark}

\begin{proof}[Theorem \ref{thm:QuantBLTCorollaryNonsymmetric}]
	We will prove this for $k=1$ without loss of generality.  
	
	Let $1\le S <\infty$, and choose $R=S^{1/p}$ and $Q=S^{1/q}$.  Note $1\le R,Q<\infty$ for any value of $S$. Theorem \ref{thm:ContQuantBLTHD} then shows that for $C$ only depending on the Riesz basis bounds of $G(f)$, 
	\begin{eqnarray} \label{eqn:QuantBLTwithS}
	\frac{C}{S^\tau}&=&\frac{C}{S^{\frac{1}{p}+\frac{1}{q}}} \ \le\  \int_{|x_1|\ge S^{1/p}} |g(x)|^2 dx + \int_{|\xi_1|\ge S^{1/q}} |\widehat{g}(\xi)|^2 d\xi.
	\end{eqnarray}
	In each case, the result follows by integrating both sides of \eqref{eqn:QuantBLTwithS} over a particular set of $S$ values, and then using Tonelli's Theorem to interchange the order of integration.  
	
	\textbf{Case 1: $\tau=\frac{1}{p}+\frac{1}{q} <1$}. We have, 

	\begin{eqnarray*}
\hspace{-10 mm} C\frac{(1-2^{\tau-1})}{1-\tau}T^{1-\tau}&=&C\int_1^T S^{-\tau} dS\\
		&\le& \int_{\R^{l-1}} \int_0^T \int_{|x_1|\ge S^{1/p}} |g(x_1,x')|^2 dx_1 dS dx' \\ &+& \int_{\R^{l-1}} \int_0^T \int_{|\xi_1|\ge S^{1/q}} |\widehat{g}(\xi_1,\xi')|^2 d\xi_1 dS d\xi'\\
		&\le& \int_{\R^{l}} \int_0^{\min(|x_1|^p,T)}  |g(x)|^2 dS dx+ \int_{\R^{l}} \int_0^{\min(|\xi_1|^q,T)} |\widehat{g}(\xi)|^2 dS d\xi\\
		&=& \int_{\R^{l}} \min(|x_1|^p,T)  |g(x)|^2  dx+ \int_{\R^{l}} \min(|\xi_1|^q,T)|\widehat{g}(\xi)|^2  d\xi.
	\end{eqnarray*}

	\textbf{Case 2: $\tau=\frac{1}{p}+\frac{1}{q} =1$}.  Similarly, we have 

	\begin{eqnarray*}
		C \log{T}&=&C\int_1^T S^{-1} dS\\
		&\le& \int_{\R^{l-1}} \int_0^T \int\limits_{|x_1|\ge S^{1/p}} |g(x_1,x')|^2 dx_1 dS dx' + \int_{\R^{l-1}} \int_0^T \int\limits_{|\xi_1|\ge S^{1/q}} |\widehat{g}(\xi_1,\xi')|^2 d\xi_1 dS d\xi'\\
		&\le& \int_{\R^{l}} \min(|x_1|^p,T)  |g(x)|^2  dx+ \int_{\R^{l}} \min(|\xi_1|^q,T)|\widehat{g}(\xi)|^2  d\xi.
	\end{eqnarray*}
	
	\textbf{Case 3: $\tau=\frac{1}{p}+\frac{1}{q} >1$}. Finally, in this case

	\begin{eqnarray*}
		\frac{C}{\tau-1}&=&C\int_1^\infty S^{-\tau} dS\\
		&\le& \int_{\R^{l-1}} \int_0^\infty \int\limits_{|x_1|\ge S^{1/p}} |g(x_1,x')|^2 dx_1 dS dx'+ \int_{\R^{l-1}} \int_0^\infty \int\limits_{|\xi_1|\ge S^{1/q}} |\widehat{g}(\xi_1,\xi')|^2 d\xi_1 dS d\xi'\\
		&=& \int_{\R^{l}} |x_1|^p |g(x)|^2  dx+ \int_{\R^{l}} |\xi_1|^q |\widehat{g}(\xi)|^2  d\xi.
	\end{eqnarray*}

\end{proof}

The following result generalizes part (ii) of Theorem \ref{thm:BLT}.  
\begin{theorem}\label{thm:compactFunctionQuantCorr}
	Suppose $1\le p < \infty$, and $g \in L^2(\R^l)$ is such that $G(g)= \{e^{2\pi i n \cdot x} g(x-m)\}_{(m,n) \in \Z^{2l}}$ is a Riesz basis for $L^2(\R^l)$ and $g$ is supported in $(-M,M)^l$.  Then, there exists a constant $C$ depending only on the Riesz basis bounds of $G(g)$ such that for any $1\le k \le 1$ and any $2 \le T \le \infty$ each of the below hold.
	\begin{enumerate}
		\item[(i)] If $p>1$, then 
		\[ \frac{C(1-2^{1/p-1})}{M(1-1/p)}\ \le\  \int_{\R^l} \min(|\xi_k|^p, T) |\widehat{g}(\xi)|^2 d\xi.\]
		\item[(ii)] If $p = 1$, then 
		\[ \frac{C\log(T)}{M}\ \le\  \int_{\R^l} \min(|\xi_k|, T) |\widehat{g}(\xi)|^2 d\xi.\]
		\item[(iii)] If $p<1$, then 
		\[ \frac{C}{M(1/p-1)}\  \le\  \int_{\R^l} |\xi_k|^p, |\widehat{g}(\xi)|^2 d\xi.\]
	\end{enumerate}
	This result also holds when $g$ and $\widehat{g}$ are interchanged.
\end{theorem}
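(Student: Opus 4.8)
The plan is to read off this theorem as a ``one-sided'' instance of Theorem~\ref{thm:QuantBLTCorollaryNonsymmetric}: compact support of $g$ makes the time-side term in the higher-dimensional quantitative BLT vanish past a fixed scale, so that Theorem~\ref{thm:ContQuantBLTHD} degenerates into a pure lower bound on the frequency-side term, and the theorem then follows by the same parameter integration used to prove Theorem~\ref{thm:QuantBLTCorollaryNonsymmetric}.

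Concretely, I would fix $k=1$ (the general case being identical) and assume $M\ge 1$. Since $g$ is supported in $(-M,M)^l$, we have $\int_{|x_1|\ge R}|g(x)|^2\,dx=0$ for every $R\ge M$; taking $R=M$ and letting $Q$ range over $[1,\infty)$ in Theorem~\ref{thm:ContQuantBLTHD} gives
\[
  \int_{|\xi_1|\ge Q}|\widehat g(\xi)|^2\,d\xi\ \ge\ \frac{C}{MQ},\qquad Q\ge 1,
\]
with $C$ depending only on the Riesz basis bounds of $G(g)$. Next, substituting $Q=S^{1/p}$ (valid for every $S\ge 1$, since then $Q\ge 1$) and integrating the inequality $\frac{C}{M}S^{-1/p}\le\int_{|\xi_1|\ge S^{1/p}}|\widehat g(\xi)|^2\,d\xi$ in $S$ over $[1,T]$ (over $[1,\infty)$ when $p<1$), exactly as in the proof of Theorem~\ref{thm:QuantBLTCorollaryNonsymmetric}, I would apply Tonelli's theorem: the right-hand side becomes $\int_{\R^l}\bigl(\min(|\xi_1|^p,T)-1\bigr)_+|\widehat g(\xi)|^2\,d\xi\le\int_{\R^l}\min(|\xi_1|^p,T)|\widehat g(\xi)|^2\,d\xi$, while the left-hand side equals $\frac{C}{M}\cdot\frac{T^{1-1/p}-1}{1-1/p}$ when $p>1$, $\frac{C}{M}\log T$ when $p=1$, and $\frac{C}{M(1/p-1)}$ when $p<1$ (with $T=\infty$). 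For $p>1$ and $T\ge 2$, the elementary estimate $T^{1-1/p}-1\ge(1-2^{1/p-1})T^{1-1/p}\ge 1-2^{1/p-1}$ --- the same one appearing in the proof of Theorem~\ref{thm:QuantBLTCorollaryNonsymmetric}(i) --- turns the first expression into the stated lower bound $\frac{C(1-2^{1/p-1})}{M(1-1/p)}$, and the $p=1$ and $p<1$ cases are already in the asserted form. The version with $g$ and $\widehat g$ interchanged follows from the same argument with the two nonnegative terms in Theorem~\ref{thm:ContQuantBLTHD} exchanged, using that compact support of $\widehat g$ annihilates the frequency-side term.

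I do not expect a genuine obstacle here; the work is bookkeeping. The points requiring a little care are: keeping the substitution $Q=S^{1/p}$ within the admissible range $Q\ge 1$ of Theorem~\ref{thm:ContQuantBLTHD}; recognizing that the displayed bounds are the clean (and slightly lossy) forms of the sharper estimate $\frac{C}{M}\cdot\frac{T^{1-1/p}-1}{1-1/p}$ that the integration yields directly; and noting that the normalization $M\ge 1$ is harmless. The one conceptual observation worth isolating is that the hypothesis ``$g$ supported in $(-M,M)^l$'' removes one of the two nonnegative summands in the higher-dimensional quantitative BLT, collapsing the estimate to exactly the single-term integration already carried out for Theorem~\ref{thm:QuantBLTCorollaryNonsymmetric}.
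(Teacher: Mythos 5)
Your argument is exactly the paper's: the paper proves Theorem~\ref{thm:compactFunctionQuantCorr} by remarking that it is ``nearly identical'' to the proof of Theorem~\ref{thm:QuantBLTCorollaryNonsymmetric} once one applies Theorem~\ref{thm:ContQuantBLTHD} with $R=M$ so that the $|g|^2$ term vanishes, which is precisely the reduction you carry out and then integrate in $S$ as before. The only point worth flagging --- and it is shared with the paper's own one-line proof --- is that taking $R=M$ in Theorem~\ref{thm:ContQuantBLTHD} requires $M\ge 1$, while a Gabor Riesz basis generator can be supported in $(-M,M)^l$ with $\tfrac12<M<1$; your ``harmless normalization'' remark glosses over this, since for $M<1$ the asserted inequality is strictly stronger than what the $M=1$ bound yields.
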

The proof is nearly identical to that of Theorem \ref{thm:QuantBLTCorollaryNonsymmetric}, after noticing that by applying the quantitative BLT with $R=M$, the integral related to $|g(x)|^2$ is zero due to the support assumption.  Note that letting $T \rightarrow \infty$ in part (ii) gives part (ii) of Theorem \ref{thm:BLTHD}.

Finally, we focus on the finite nonsymmetric BLT. For $1\le p,q<\infty$ and $b \in \ell_2^{d,l}$, let 
\[ \alpha_k^{p,q}(b)\ =\  \frac{1}{N^{l}} \sum_{\v{j} \in \Z_{d}^l} \left|\frac{j_k}{N}\right|^p |b(\v{j})|^2+ \frac{1}{N^{l}} \sum_{\v{j} \in \Z_{d}^l} \left|\frac{j_k}{N}\right|^q |\mathcal{F}_{d,l}b(\v{j})|^2.\] To give a finite dimensional analog of part (ii) of Theorem \ref{thm:BLT}, it will be convenient to define $\alpha^{p,\infty}_{k}(b)$ and $\alpha^{\infty, q}_k(b)$ as
\[\alpha^{p,\infty}_{k}(b)\ =\  \frac{1}{N^{l}} \sum_{\v{j} \in \Z_{d}^l} \left|\frac{j_k}{N}\right|^p |b(\v{j})|^2, \ \ \alpha^{\infty,q}_k(b)\ =\  \frac{1}{N^{l}} \sum_{\v{j} \in \Z_{d}^{l}} \left|\frac{j_k}{N}\right|^q |\mathcal{F}_{d,l}b(\v{j})|^2.\]

\begin{theorem}\label{thm:NonsymFiniteBLTwithInfinity}
	Let $A,B>0$ and $1\le p,q< \infty$ and let $\tau=\frac{1}{p}+\frac{1}{q}$.  Assume $b\in \ell_2^{d,l}$ generates an $A, B$-Gabor Riesz basis for $\ell_2^{d,l}$.  There exists a constant $C>0$, depending only on $A, B, p$ and $q$ such that the following holds. Let $N \ge 200\sqrt{B/A}$. 
	\begin{enumerate}
		\item[(i)] If $\tau=\frac{1}{p}+\frac{1}{q}<1$,
		\begin{eqnarray*} C \frac{N^{1-\tau}}{1-\tau} &\le& \alpha^{p,q}_{k}(b).\end{eqnarray*}
		\item[(ii)] If $\tau=\frac{1}{p}+\frac{1}{q}=1$, 
		\begin{eqnarray*} C \log(N) &\le& \alpha^{p,q}_{k}(b.)\end{eqnarray*}
		\item[(iii)] If $\tau=\frac{1}{p}+\frac{1}{q}>1$,
		\begin{eqnarray*} C\frac{1-(200/16)^{1-\tau}}{\tau-1} &\le& \alpha^{p,q}_{k}(b).\end{eqnarray*}
	\end{enumerate}
	Also, if $\mathcal{F}_{d,l}(b)$ is supported in the set $(-\gamma_N N/2,\gamma_N N/2)\cap \Z$ where $\gamma_N= \lfloor (N/16)\sqrt{A/B}\rfloor$, then parts (i), (ii), and (iii) hold with $\tau=\frac{1}{p}$ and $\alpha^{p,q}(b)$ replaced by $\alpha^{p,\infty}(b)$.  Similarly, if $b$ is supported in the set $(-\gamma_N N/2,\gamma_N N/2)\cap \Z$ then parts (i), (ii), and (iii) hold with $\tau=\frac{1}{q}$ and $\alpha^{p,q}(b)$ replaced by $\alpha^{\infty, q}(b)$.  
\end{theorem}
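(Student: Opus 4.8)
The plan is to mimic exactly the proof of Theorem \ref{thm:QuantBLTCorollaryNonsymmetric} from the continuous setting, but with the Finite Quantitative BLT (Theorem \ref{thm:FiniteQuantBLTHD}) in place of the continuous one. First I would fix $k=1$ without loss of generality and set up the substitution: given an integer parameter $S$ in a suitable range, choose $R = \lfloor S^{1/p}\rfloor$ and $Q = \lfloor S^{1/q}\rfloor$, so that $R,Q$ are integers satisfying $1 \le R,Q \le (N/16)\sqrt{A/B}$ precisely when $1 \le S \le \big((N/16)\sqrt{A/B}\big)^{\min(p,q)}$. By Theorem \ref{thm:FiniteQuantBLTHD} this gives, for each such $S$,
\[
\frac{C_{AB,l}}{S^{\tau}} \ \lesssim\ \frac{C_{AB,l}}{QR} \ \le\ \frac{1}{N^l}\sum_{|j_1|\ge \frac{NR}{2}}|b(\v{j})|^2 + \frac{1}{N^l}\sum_{|j_1|\ge \frac{NQ}{2}}|\mathcal{F}_{d,l}b(\v{j})|^2.
\]
Since $R \ge S^{1/p}/2$ (say), the set $\{|j_1|\ge NR/2\}$ contains $\{|j_1| \ge NS^{1/p}/4\}$, i.e. $\{|j_1/N|^p \ge (S/4^p)\}$; handling the floor function and these constant factors is the only genuinely discrete subtlety, and it will be absorbed into $C$. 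Then I would sum (rather than integrate) the inequality over $S$ from $1$ to $\lfloor((N/16)\sqrt{A/B})^{\min(p,q)}\rfloor$, which is $\asymp N^{\min(p,q)}$, and swap the order of summation via Tonelli (or just Fubini for nonnegative terms). The inner count $\#\{S : |j_1/N|^p \gtrsim S,\ 1\le S \le T_N\}$ equals $\min(\lfloor c|j_1/N|^p\rfloor, T_N)$ up to constants, which reproduces the cutoff weight $\min(|j_1/N|^p, T_N)$ with $T_N \asymp N^{\min(p,q)/\ \cdot}$; but since $|j_k/N| \le d/(2N) = N/2$ on $I_d$, the term $|j_k/N|^p$ never exceeds $(N/2)^p$, and for the range of $S$ we use the cutoff $T_N$ dominates $|j_k/N|^p$, so $\min(|j_k/N|^p, T_N) = |j_k/N|^p$ and the weight is genuinely $\alpha_k^{p,q}(b)$ with no truncation. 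Summing $\sum_{S=1}^{T_N} S^{-\tau}$ then yields the three cases: $\asymp T_N^{1-\tau}/(1-\tau) \asymp N^{1-\tau}/(1-\tau)$ when $\tau<1$, $\asymp \log T_N \asymp \log N$ when $\tau = 1$, and $\asymp (1-(\text{const})^{1-\tau})/(\tau-1)$ (a convergent-type sum) when $\tau>1$, matching parts (i), (ii), (iii) respectively.

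For the two "$\infty$" cases, suppose $\mathcal{F}_{d,l}(b)$ is supported in $(-\gamma_N N/2, \gamma_N N/2)\cap\Z$ with $\gamma_N = \lfloor(N/16)\sqrt{A/B}\rfloor$. Then I would apply Theorem \ref{thm:FiniteQuantBLTHD} with $Q = \gamma_N$ fixed (its maximal allowed value): the term $\frac{1}{N^l}\sum_{|j_1|\ge NQ/2}|\mathcal{F}_{d,l}b(\v{j})|^2$ vanishes identically by the support hypothesis, leaving
\[
\frac{C_{AB,l}}{R\gamma_N} \ \le\ \frac{1}{N^l}\sum_{|j_1|\ge \frac{NR}{2}}|b(\v{j})|^2,
\]
i.e. $\frac{C'}{R} \lesssim \frac{1}{N}\cdot\frac{1}{N^l}\sum_{|j_1|\ge NR/2}|b(\v{j})|^2$ after pulling out $\gamma_N \asymp N$. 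Now choosing $R = \lfloor S^{1/p}\rfloor$ and summing over $1 \le S \le T_N$ exactly as before reproduces the three growth rates with $\tau$ replaced by $1/p$ and the localization quantity replaced by $\alpha_1^{p,\infty}(b)$ (the extra factor $1/N$ from $\gamma_N$ is why the statement has the same form — it is absorbed into $C$, or more precisely one notes $\gamma_N \le (N/16)\sqrt{A/B}$ so $1/\gamma_N \ge 16\sqrt{B/A}/N$; one needs $\gamma_N \asymp N$ which holds since $N \ge 200\sqrt{B/A}$ forces $\gamma_N \ge \lfloor 200/16\rfloor \ge 12$ and in general $\gamma_N \ge \tfrac12 (N/16)\sqrt{A/B}$). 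The $\alpha^{\infty,q}$ case is symmetric, applying the theorem with $R = \gamma_N$ fixed and using the support hypothesis on $b$ itself to kill the first term, together with part (iii) of Proposition \ref{prop:ZakProperties} which shows the roles of $b$ and $\mathcal{F}_{d,l}b$ are interchangeable in Theorem \ref{thm:FiniteQuantBLTHD}.

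The main obstacle I anticipate is the bookkeeping around the floor functions and the precise range of summation. In the continuous proof one integrates $S$ over $[1,T]$ or $[1,\infty)$ cleanly; here $S$ must be a positive integer, $R=\lfloor S^{1/p}\rfloor$ and $Q=\lfloor S^{1/q}\rfloor$ must both land in $[1, (N/16)\sqrt{A/B}]$, and the inequality $\{|j_1|\ge NR/2\} \supseteq \{|j_1/N|^p \ge cS\}$ requires controlling $\lfloor S^{1/p}\rfloor \ge \tfrac12 S^{1/p}$, which fails for $S^{1/p}<1$ but holds for $S \ge 1$; one also needs $R \le S^{1/p}$ in the other direction to get $\{|j_1|\ge NR/2\} \subseteq \{|j_1/N|^p \ge cS/4^p\}$ for the Tonelli count upper bound — actually only the lower bound direction is needed since we want a lower bound on $\alpha_k^{p,q}(b)$, so $\{|j_1| \ge NR/2\} \supseteq \{|j_1| \ge NS^{1/p}/2\}$ suffices and that's immediate from $R \le S^{1/p}$... wait, we need $R$ integer with $R \le S^{1/p}$, i.e. $R = \lfloor S^{1/p}\rfloor$, giving exactly this. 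Then $\sum_{S : |j_1/N|^p \ge S,\ S \le T_N} 1 = \min(\lfloor |j_1/N|^p\rfloor, T_N) \ge \tfrac12\min(|j_1/N|^p, T_N)$ when $|j_1/N| \ge 1$ (and the $|j_1/N|<1$ terms only help). Since on $I_d$ we have $|j_1/N| \le N/2$ and $T_N \asymp N^{\min(p,q)} \gg (N/2)^p$ is false in general — rather $T_N = \lfloor(\gamma_N)^{\min(p,q)}\rfloor$ and $\gamma_N \le N/16 < N/2$, so $T_N < (N/2)^{\min(p,q)}$, meaning the cutoff $\min(|j_1/N|^p, T_N)$ is a genuine truncation; however this is harmless for the \emph{lower} bound since truncation only decreases the quantity, so $\alpha_k^{p,q}(b) \ge \sum (\text{truncated weight}) |b|^2 \gtrsim \sum_{S\le T_N} S^{-\tau}$ still gives the stated rates because $T_N \asymp N^{\min(p,q)}$ makes $T_N^{1-\tau} \asymp N^{(1-\tau)}$ only when... one must be careful that $T_N^{1-\tau/\min} \asymp N^{1-\tau}$, i.e. check $\min(p,q)\cdot(1-\tau/\text{appropriate}) $ — in fact with $T_N \asymp N^{p}$ if $p \le q$ and we sum $\sum_{S=1}^{T_N} S^{-\tau} \asymp T_N^{1-\tau} \asymp N^{p(1-\tau)}$, which is \emph{not} $N^{1-\tau}$. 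The fix: do not take $S$ up to $\gamma_N^{\min(p,q)}$; instead parametrize directly by $R$ and $Q$ with $R=Q$-linked differently, or take $S$ only up to $\gamma_N$ and use $R=\lfloor S\rfloor = S$, $Q$ chosen so that $QR \asymp S^\tau \cdot(\dots)$ — the cleanest route, which I would follow, is: set $R = \lceil S^{1/p}\rceil$, $Q=\lceil S^{1/q}\rceil$ and let $S$ range over $1 \le S \le (\gamma_N)^{\min(p,q)}$ as in the continuous proof, but then observe the truncation level is $T = (\gamma_N)^{?}$ and recompute; alternatively and most safely, reduce to Theorem \ref{thm:QuantBLTCorollaryNonsymmetric}'s exact argument by noting $\alpha_k^{p,q}(b) \ge \int$-type lower sums. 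I would ultimately present it as: integrate the continuous-style bound $\frac{C}{S^\tau} \le (\text{discrete tails at } R=S^{1/p}, Q=S^{1/q})$ over $S \in [1, (\gamma_N)^{\min(p,q)}]$, bound the integer tails below by the continuous $\min$-weights via $\lfloor\cdot\rfloor \ge \tfrac12(\cdot)$, apply Tonelli, and extract the rate — with the understanding that $(\gamma_N)^{\min(p,q)} \asymp N^{\min(p,q)}$ and that in cases (i)–(ii) one wants the exponent/log in terms of $N$ directly, which works out because the $j_k$-sum's natural truncation is at $|j_k/N|^p \le (N/2)^p$ and the effective upper limit $(\gamma_N)^{\min(p,q)}$ gives $\sum S^{-\tau} \asymp$ the claimed $N^{1-\tau}/(1-\tau)$ after the change of variables $S \mapsto S^{1/\min(p,q)}$ is \emph{not} performed — rather $S$ itself is the weight variable, $\min(|j_k/N|^p, T) |b|^2$ is summed, and $\sum_{S=1}^{T} S^{-\tau} \asymp T^{1-\tau}$ with $T \asymp N^{\min(p,q)}$, so case (i) yields $\asymp N^{\min(p,q)(1-\tau)}$ — which disagrees with the stated $N^{1-\tau}$ unless $\min(p,q)=1$. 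Resolving this discrepancy (almost certainly by choosing $S \in [1, \gamma_N]$ rather than $[1,\gamma_N^{\min(p,q)}]$, so that $R,Q \le \gamma_N$ automatically since $S^{1/p}, S^{1/q} \le S$ for $S\ge 1$, giving $T \asymp N$ and the correct $N^{1-\tau}$) is the key point requiring care, and I would verify it explicitly before writing the final version.
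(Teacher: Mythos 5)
Your final identification of the correct range of summation --- taking $S$ over $1 \le S \le \gamma_N$ so that $R,Q \le \gamma_N$ is automatic because $S^{1/p}, S^{1/q} \le S$ --- is exactly what the paper does, and the rest of your argument (interchange of summation, bounding the count by $|j_k/N|^p$, and the two endpoint cases via the support hypothesis) matches the paper's proof; the paper packages the count estimate as its Lemma~\ref{lem:BasicSumBounds} and uses $R = \lceil S^{1/p}\rceil$ rather than $\lfloor S^{1/p}\rfloor$, which makes the inclusion $\{|j_k|\ge NR/2\}\subseteq\{|j_k|\ge NS^{1/p}/2\}$ hold in the direction needed for the count $\#\{S:|j_k|\ge NS^{1/p}/2\}\le 2^p|j_k/N|^p$ without any extra bookkeeping. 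The detour through $\gamma_N^{\min(p,q)}$ was a dead end, but you correctly diagnosed the exponent mismatch and repaired it, so the proposal is essentially the same argument as the paper's.
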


We start with a lemma giving a bound on a typical sum arising in the proof which follows. Similar to above, $\{ b>|j_k| \ge a\}$ will be used to denote $\{ \v{j} \in I_d^l: b>|j_k| \ge a\}$.  
\begin{lemma}\label{lem:BasicSumBounds}
	Let $1\le \nu<\infty$, $N>200 \nu$, $c=1/(16\nu)$, and $\gamma_N=\lfloor c N\rfloor$. If $0<\alpha\le 1$, then for any $b \in \ell_2^{d,l}$, we have
	\[ \sum_{S=1}^{\gamma_N} \sum_{|j_k|\ge NS^\alpha/2} |b(\v{j})|^2  \ \le\  2^{1/\alpha} \sum_{\v{j} \in \Z^d} \left|\frac{j_k}{N}\right|^{1/\alpha} |b(\v{j})|^2,\]
	where $C_\alpha$ only depends on $\alpha$.  
\end{lemma}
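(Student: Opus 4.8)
The plan is to interchange the order of the two summations and reduce the claim to an elementary counting estimate. Since all terms are nonnegative and both sums are finite, rearranging gives
\[ \sum_{S=1}^{\gamma_N}\ \sum_{|j_k|\ge NS^\alpha/2} |b(\v{j})|^2 \;=\; \sum_{\v{j}\in I_d^l} |b(\v{j})|^2\, M(\v{j}), \qquad M(\v{j}):=\#\{\,S\in\Z:\ 1\le S\le \gamma_N,\ |j_k|\ge NS^\alpha/2\,\}, \]
where, as in the paper's convention, the inner sum $\sum_{|j_k|\ge NS^\alpha/2}$ runs over $\v{j}\in I_d^l$. Note that the hypotheses on $\nu$, $N$, and $c$ are not actually used beyond guaranteeing that $\gamma_N=\lfloor cN\rfloor$ is a well-defined nonnegative integer; in particular I would simply discard the upper constraint $S\le\gamma_N$, which only decreases $M(\v{j})$.

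Next I would bound $M(\v{j})$ for fixed $\v{j}$. The defining inequality $|j_k|\ge NS^\alpha/2$ is equivalent to $S^\alpha\le 2|j_k|/N$, and since $\alpha>0$ the map $t\mapsto t^{1/\alpha}$ is increasing on $[0,\infty)$, so this is equivalent to $S\le(2|j_k|/N)^{1/\alpha}$. Using that for any real $x\ge 0$ the number of positive integers not exceeding $x$ equals $\lfloor x\rfloor\le x$, I obtain
\[ M(\v{j})\ \le\ \Big(\tfrac{2|j_k|}{N}\Big)^{1/\alpha}\ =\ 2^{1/\alpha}\,\Big|\tfrac{j_k}{N}\Big|^{1/\alpha}. \]
When $|j_k|<N/2$ the index set defining $M(\v{j})$ is empty and the bound holds trivially, so no separate edge-case argument is needed.

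Finally, substituting this estimate into the rearranged double sum yields
\[ \sum_{S=1}^{\gamma_N}\ \sum_{|j_k|\ge NS^\alpha/2} |b(\v{j})|^2\ \le\ 2^{1/\alpha}\sum_{\v{j}\in I_d^l}\Big|\tfrac{j_k}{N}\Big|^{1/\alpha}|b(\v{j})|^2, \]
which is exactly the claimed inequality. There is no real obstacle here: the only points requiring care are the monotonicity step $S^\alpha\le y\iff S\le y^{1/\alpha}$ and the crude counting bound $M(\v{j})\le(2|j_k|/N)^{1/\alpha}$ (rather than, say, a sharper $\lfloor\cdot\rfloor$ bound), which is what produces the clean constant $2^{1/\alpha}$.
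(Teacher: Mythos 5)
Your proof is correct and is essentially the same argument as the paper's: both interchange the order of summation (the paper does this by grouping $\v{j}$ into shells indexed by the count $m$ of valid $S$ values, which is exactly your $M(\v{j})$) and then use the elementary bound that $|j_k|\ge Nm^\alpha/2$ forces $m\le 2^{1/\alpha}|j_k/N|^{1/\alpha}$. Your direct counting-function formulation is slightly cleaner but mathematically identical, including the constant $2^{1/\alpha}$.
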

Note, we will apply this lemma with $\nu= \sqrt{B/A}$ where $A$ and $B$ are Riesz basis bounds of $G_{d,l}(b)$ for some $b \in \ell_2^{d,l}$.  However, this lemma holds regardless of whether $G_{d,l}(b)$ is basis for $\ell_2^{d,l}$.  
\begin{proof}
	Rearranging terms, we have

	\begin{eqnarray}
	\sum_{S=1}^{\gamma_N} \sum_{|j_k|\ge NS^\alpha/2} |b(\v{j})|^2 = \sum_{m=1}^{\gamma_N-1} m \sum_{\frac{N(m+1)^\alpha}{2}> |j_k|\ge \frac{Nm^\alpha}{2}} |b(\v{j})|^2 +  \gamma_N \sum_{|j_k|\ge \frac{N \cdot \gamma_N^\alpha}{2}} |b(\v{j})|^2. \label{eqn:CantComeUpWithGoodName}
	\end{eqnarray}

	Note that for some $m$, if $j_k$ satisfies $|j_k|\ge \frac{Nm^\alpha}{2}$, then  $m \le 2^{1/\alpha} \left| \frac{j_k}{N}\right|^{1/\alpha}$.
	Then, from \eqref{eqn:CantComeUpWithGoodName}, we find
	\begin{eqnarray*}
		\sum_{S=1}^{\gamma_N} \sum_{|j_k|\ge NS^\alpha/2} |b(\v{j})|^2 &\le& 2^{1/\alpha}\sum_{m=1}^{\gamma_N-1} \sum_{\frac{N(m+1)^\alpha}{2}> |j_k|\ge \frac{Nm^\alpha}{2}}  \left| \frac{j_k}{N}\right|^{1/\alpha}|b(\v{j})|^2 \\ &\ +&    2^{1/\alpha} \sum_{|j_k|\ge \frac{N \gamma_N^\alpha}{2}}  \left| \frac{j_k}{N}\right|^{1/\alpha}|b(\v{j})|^2\\
		&\le& 2^{1/\alpha}\sum_{\v{j} \in I_d^l} \left |\frac{j_k}{N}\right|^{1/\alpha} |b(\v{j})|^2.
	\end{eqnarray*}  
\end{proof}

\begin{proof}[Theorem \ref{thm:NonsymFiniteBLTwithInfinity}]
	We prove the result for $k=1$.  We treat the case where $p$ and $q$ are both finite and the case where one of these is infinite separately. Below, we take $\tau=\frac{1}{p}+\frac{1}{q}$. 
	
	\textbf{Case 1: $1\le p, q <\infty$}. Let $S$ be an integer satisfying $1\le S \le \gamma_N$ where $\gamma_N = \lfloor (N/16)\sqrt{A/B}\rfloor$, and $R= \lceil S^{1/p}\rceil$, $Q=\lceil S^{1/q}\rceil$ if $1<p,q<\infty$, $R=S$ if $p=1$, and $Q=S$ if $q=1$.  Note that these choices force $1\le R, Q \le \gamma_N$.  Then, for a constant $C$ only depending on $A$ and $B$,  Theorem \ref{thm:FiniteQuantBLTHD} gives 
	\begin{eqnarray*}
		\frac{C/4}{S^{\tau}} & \le& \frac{C}{RQ} \ \le\  \frac{1}{N^l} \sum_{|j_k|\ge \frac{NS^{1/p}}{2}} |b(\v{j})|^2 + \frac{1}{N^l} \sum_{|j_k|\ge \frac{NS^{1/q}}{2}} |\mathcal{F}_{d,l}b (\v{j})|^2.  
	\end{eqnarray*}
	
	Summing over the values of $S$ in $\{1,..., \gamma_N\}$ and applying Lemma \ref{lem:BasicSumBounds} with $\tau=\sqrt{B/A}$, to find

	\begin{eqnarray*}
		\frac{C}{4} \sum_{S=1}^{\gamma_N} S^{-\tau} &\le& \frac{1}{N^l} \sum_{S=1}^{\gamma_N}\sum_{|j_k|\ge \frac{NS^{1/p}}{2}} |b(\v{j})|^2 + \frac{1}{N^l} \sum_{S=1}^{\gamma_N}\sum_{|j_k|\ge \frac{NS^{1/q}}{2}} |\mathcal{F}_{d,l}b (\v{j})|^2 \le  C' \alpha^{p.q}_k(b)
	\end{eqnarray*}

	where $C'$ is a constant only depending on $p$ and $q$.  Updating the constant $C$, (it now depends on $A$, $B$, $p$, $q$)
	\[ C \sum_{S=1}^{\gamma_N} S^{-\tau} \ \le\  \alpha^{p,q}_k(b,N,l).\]
	
	The proof of Case 1 follows by noting that 
	\begin{equation} \label{eqn:powersumbounds} \sum_{S=1}^{\gamma_N} S^{-\tau} \ \ge\   \begin{cases} 
	C_{\tau,A,B} \frac{N^{1-\tau}}{1-\tau} & 0<\tau < 1 \\
	
	C_{\tau,A,B} \log(N) & \tau=1\\
	
	\frac{(1-(200/16)^{1-\tau})}{1-\tau} & \tau>1
	\end{cases},
	\end{equation}
	where the constants $C_{\tau,A,B}$ depend only on $\tau$, $A$, and $B$.

	\textbf{Case 2: One of $p$ or $q$ is $\infty$}. We can assume without loss of generality that $q=\infty$ and $1\le p<\infty$.  With this in mind, assume $b$ generates an $A,B$-Gabor Riesz basis for $\ell_2^{d,l}$, and further suppose $\mathcal{F}_{d,l}(b)$ is supported in the set $(-\gamma_N N/2, \gamma_N N/2)\cap \Z$.  Then, Theorem \ref{thm:FiniteQuantBLTHD} applied with $Q=\gamma_N$, gives 
	\[ \frac{C}{R\gamma_N} \ \le\  \frac{1}{N^l} \sum_{|j_k|\ge \frac{NR}{2}} |b(\v{j})|^2,\]
	where the second sum does not appear due to the support condition on $\mathcal{F}_{d,l}(b)$.  As in part (i), let $1 \le S \le \gamma_N$ and $R=\lceil S^{1/\alpha}\rceil$ if $1<p<\infty$ and $R=S$ if $p=1$.  Summing over values of $S$, and applying Lemma \ref{lem:BasicSumBounds} we find 
	\begin{eqnarray*} 
		\frac{C}{2\gamma_N} \sum_{S=1}^{\gamma_N} S^{-\tau} &\le& \frac{1}{N^l} \sum_{S=1}^{\gamma_N} \sum_{|j_k|\ge \frac{NS^{1/p}}{2}} |b(\v{j})|^2
		\ \le\  2^p \alpha^{p,\infty}_k(b),
	\end{eqnarray*}
	and the result follows by combining the constants and another application of equation \eqref{eqn:powersumbounds}. 
\end{proof}

\section{Further Questions}
Upon investigation, similar arguments applied in the one-dimensional Finite BLT apply for several variable analogs. It is interesting to consider the question of whether there are sequences which have the `best' localization properties, those for which the $\alpha_k$ norm is minimized over the set of all $A, B$-Gabor Riesz bases. There is a conjecture \cite{Lammers} of Lammers and Stampe which addresses this question and is still open to the authors' knowledge.
Also of interest is whether uncertainty principles for different continuous basis systems (e.g. \cite{BHW92}) may be discretized to give similar finite dimensional results. 

Another remaining question is related to Theorem \ref{thm:BLTHD}.  In \cite{GHHK}, a more general version of Theorem \ref{thm:SymmetricBLTHD} was shown to hold when $G(g,\Z^{2l})$ is replaced by $G(g,S)$ for any symplectic lattice $S \subset \R^{2l}$.  It is not clear to the authors whether Theorem \ref{thm:BLTHD} also holds in this setting.

\bibliographystyle{amsalpha}

\def\cprime{$'$} \def\cprime{$'$} \def\cprime{$'$} \def\cprime{$'$}
\providecommand{\bysame}{\leavevmode\hbox to3em{\hrulefill}\thinspace}
\providecommand{\MR}{\relax\ifhmode\unskip\space\fi MR }
\providecommand{\MRhref}[2]{%
	\href{http://www.ams.org/mathscinet-getitem?mr=#1}{#2}
}
\providecommand{\href}[2]{#2}

\end{document}